\newtheorem{thm}{Theorem}[section]
\newtheorem{lem}[thm]{Lemma}
\newtheorem{cor}[thm]{Corollary}
\newtheorem{prop}[thm]{Proposition}
\newtheorem*{rep@theorem}{\rep@title}
\newcommand{\newreptheorem}[2]{%
\newenvironment{rep#1}[1]{%
\hypersetup{linkcolor=black}%
 \def\rep@title{#2~\ref{##1}}%
 \begin{rep@theorem}}%
 {\end{rep@theorem}}}
\theoremstyle{definition}
\newtheorem{defn}[thm]{Definition}
\newtheorem{remark}[thm]{Remark}
\newcommand{\Def}[1]{\textbf{#1}} 
\newcommand{\1}{\mathbf{1}}%
\renewcommand{\emptyset}{\varnothing}
\newcommand{\eps}{\varepsilon}
\newcommand{\R}{\mathbb{R}}
\newcommand\wt{w}%
\newcommand\colpat{\widetilde{\arb}}%
\renewcommand\AA{\mathbb{A}}%
\newcommand\SymGrp{\mathfrak{S}}%
\newcommand\PP{\Pi}%
\newcommand\PPGKZ{\psi}%
\renewcommand\L{\mathcal{L}}%
\DeclareMathOperator{\argmax}{argmax}
\DeclareMathOperator{\argmin}{argmin}
\DeclareMathOperator{\conv}{conv}
\DeclareMathOperator{\aff}{aff}
\newcommand\PS[1][m-1,n-1]{P_{#1}}%
\newcommand\PSV{V_{m,n}}%
\newcommand\PSVo{V^\circ_{m,n}}%
\date{\today}
\keywords{linear programming, geometry of pivot rules, particle collisions,
associahedra, multiplihedra, constrainahedra}
\subjclass[2020]{%
90C05, %
90C57, %
52B12, %
52B11} %
\begin{document}

\title{From linear programming to colliding particles}

\author[A.~Black \and N.~L\"utjeharms \and R.~Sanyal]{Alexander
E. Black \and Niklas L\"utjeharms \and Raman Sanyal}

\address[A.~Black]{Dept.\ Mathematics, Univ. of California,
Davis, CA 95616, USA}
\email{aeblack@ucdavis.edu}

\address[N.~L\"utjeharms, R.~Sanyal]{Institut f\"ur Mathematik,
Goethe-Universit\"at Frankfurt, Frankfurt am Main, Germany} 
\email{sanyal@math.uni-frankfurt.de}

\begin{abstract}
    Although simplices are trivial from a linear optimization standpoint, the
    simplex algorithm can exhibit quite complex behavior. In this paper we
    study the behavior of max-slope pivot rules on (products of) simplices and
    describe the associated pivot rule polytopes. For simplices, the pivot
    rule polytopes are combinatorially isomorphic to associahedra. To prove
    this correspondence, we interpret max-slope pivot rules in terms of the
    combinatorics of colliding particles on a line. For prisms over simplices,
    we recover Stasheff's multiplihedra. For products of two simplices we get
    new realizations of constrainahedra, that capture the combinatorics of
    certain particle systems in the plane.
\end{abstract}

\maketitle

\newcommand\inner[1]{\langle {#1} \rangle}%
\newcommand\arb{\mathcal{A}}%
\newcommand\vopt{v_{\mathrm{opt}}}%
\newcommand\Ass[1][n-2]{\mathrm{Asso}_{#1}}%

\section{Introduction}\label{sec:intro}

A linear program (LP) is an optimization problem of the form
\[
    \begin{array}{lr@{\,+\,}c@{\,+\,}r@{\,}l}
    \text{maximize}  &  c_1x_1 & \cdots & c_nx_n  \\
    \text{subject to}  &  a_{i1} x_1 & \cdots & a_{in} x_n & \ \le \ b_i \quad
    \text{ for } i=1,\dots,N \, .
\end{array}
\]
Geometrically, the set $P \subset \R^n$ of feasible solutions is a polyhedron
and a generic objective function $c = (c_1,\dots,c_n)$ induces an
orientation on the geometric graph $G(P)$. The orientation is acyclic
with a unique sink at the optimal vertex $\vopt$. The simplex algorithm starts
at a given vertex $v$ of $P$ and proceeds along directed edges to $\vopt$.
The choice which edges to pursue is governed by a pivot rule.

A polytope $P \subset \R^n$ is a \emph{simplex} if its vertices are affinely
independent. Simplices are trivial from an optimization viewpoint as any two
vertices of $P$ are adjacent. However, sophisticated algorithms such as the
simplex algorithm can exhibit complex and interesting behavior on trivial
instances. In this paper, we describe a beautiful and unexpected connection
between the behavior of certain pivot rules on (products of) simplices and the
combinatorics of colliding particles.

A pivot rule is memory-less if its behavior on a linear program $(P,c)$ is
captured by an arborescence (or rooted tree) on $G(P)$. In~\cite{PivPoly} we
studied families of memory-less pivot rules that are parametrized by weight
vectors $\wt$. We showed that for any linear program $(P,c)$ there is a
polytope $\PP(P,c)$, the \emph{pivot rule polytope}, whose vertices correspond
to the arborescences of $(P,c)$ induced by the family of pivot rules. The
facial structure of $\PP(P,c)$ reflects the relation between the different
rules on $(P,c)$.  For the \emph{max-slope} pivot rule, a generalization of
the shadow vertex algorithm of Gass and Saaty~\cite{GassSaaty} that we
introduced in~\cite{PivPoly}, we were surprised to observe that the numbers of
max-slope arborescences for simplices are given by the Catalan numbers.  Much
of the combinatorics surrounding the Catalan numbers is famously embodied by
the \Def{associahedron} $\Ass[n-2]$, a certain partially ordered set that is
ubiquitous in geometric and algebraic combinatorics. Lee~\cite{lee} showed
that $\Ass[n-2]$ is isomorphic to the face poset of a simple $(n-2)$-dimensional
polytope. Since then many different ways of realizing $\Ass$ as a polytope
have been discovered in various contexts; see, for example,
\cite{CeballosSantosZiegler}. The present paper adds a new construction to the
list.

\begin{reptheorem}{thm:main_asso}
    Let $P$ be an $(n-1)$-dimensional simplex and $c$ a generic objective
    function. Then the max-slope pivot rule polytope $\PP(P,c)$ is
    combinatorially isomorphic to the $(n-2)$-dimensional associahedron
    $\Ass$.
\end{reptheorem}

Associahedra describe the combinatorics of colliding particles. Consider $n$
distinct and ordered particles on the real line.  Particles move, collide, and
merge until there is a single particle left. The various collisions can be
time-independently recorded by a \emph{bracketing}. For example $(12)(345)$
states that at some point particles $1$ and $2$ collide and, earlier or later,
$3,4$, and $5$ collide simultaneously. Eventually, the two remaining particles
collide. The associahedron $\Ass$ is the set of bracketings of $123\dots n$
partially ordered by refinement. In order to prove \Cref{thm:main_asso}, we
describe a geometric correspondence between max-slope arborescences and
bracketings. For that, the objective function $c$ gives rise to velocities for
the $n$ particles and the weights $\wt$ give each particle a location at time
$t=0$. For $t >0$, the particles start to move from their locations at
constant velocity.  If two particles collide, the slower particle is absorbed
by the faster one, which continues at its original velocity.  For $t \gg 0$,
only particle $n$ is left. We record the particle $\arb(i)$ that absorbs the
particle $i$.  Towards a proof of \Cref{thm:main_asso}, we show that these
maps $\arb$, called collision patterns, are in bijection with bracketings and
are precisely the max-slope arborescences of an $(n-1)$-simplex with objective
function $c$.

Bottman and Poliakova~\cite{BottmanPoliakova} studied a more general setup for
particle collisions. For $m,n \ge 1$, they consider $m \cdot n$ particles
sitting at the intersections of $m$ horizontal and $n$ vertical lines in the
plane.  The particles are allowed to move horizontally or vertically but they
must retain their colinearities. The collisions can be recorded by
\emph{rectangular brackets} or, equivalently, by a partially ordered set on
the (spaces between) the horizontal and vertical lines. The resulting poset of
rectangular bracketings is called the \Def{constrainahedron} $C(m,n)$. For
$m=1$, this is the associahedron. For $m=2$, $C(2,n)$ is isomorphic to the
\emph{multiplihedron}, a poset first described by
Stasheff~\cite{Stasheff-book} and realized as a polytope by
Forcey~\cite{Forcey}. It is shown in~\cite{BottmanPoliakova}, that $C(m,n)$ is
the face poset of a generalized permutahedron~\cite{GenPermOrig}. Chapoton and
Pilaud~\cite{ChapotonPilaud} introduced a remarkable operation on products of
generalized permutahedra, called \emph{shuffle products}, and gave a different
realization of $C(m,n)$ as the shuffle product of Loday associahedra.  We give
new realizations of constrainahedra that, in particular, are not generalized
permutahedra.

\begin{reptheorem}{thm:constrainahedron}
    Let $\PS$ be the product of an $(m-1)$-simplex and an $(n-1)$-simplex and
    let $c$ be a generic objective function. Then the max-slope pivot rule
    polytope $\PP(\PS,c)$ is combinatorially isomorphic the
    $(m,n)$-constrainahedron $C(m,n)$.
\end{reptheorem}

The combinatorial construction of constrainahedra was motivated by questions in
homotopical algebra~\cite{poliakova-thesis} as well as Gromov compactifications
of configuration spaces. More precisely, Bottman~\cite{bottman} constructed
\emph{$2$-associahedra} as posets capturing the behavior of ordered particles on
parallel lines in the plane without colinearities. It is conjectured that
$2$-associahedra are face posets of convex polytopes and we hope that our
results provide a new point of view on this conjecture.

Our techniques generalize  to higher products of simplices. We focus on the
case in which all but one simplex is $1$-dimensional.

\begin{reptheorem}{thm:multi_multiplihedron}
    Let $(\AA,\cdot)$ be a non-associative monoid and let $f_1,\dots, f_k :
    \AA \to \AA$ be morphisms. The vertices of the max-slope pivot polytope
    of an $(n-1)$-simplex times a $k$-cube are in bijection with the possible
    ways of evaluating 
    \[
        (f_{\sigma(1)} \circ f_{\sigma(2)} \circ \cdots \circ
        f_{\sigma(k)})(a_1 \cdot a_2 \cdots a_n)\, ,
    \]
    where $a_1,\dots,a_n \in \AA$ and $\sigma \in \SymGrp_k$ is a permutation.
\end{reptheorem}

These are the vertices of the $(m,n)$-multiplihedra of Chapoton and
Pilaud~\cite{ChapotonPilaud}; see also~\cite{Hochschild}. In light of these
results, we conjectured together with Vincent Pilaud that max-slope pivot rule
polytopes of higher products of simplices are isomorphic to shuffle products of
associahedra. This was proven by Germain Poullot in his
thesis~\cite{poullot:tel-04269354} and will appear in a forthcoming paper. We
close with some results and conjectures on the enumerative combinatorics on the
vertex numbers of these $(m,n)$-multiplihedra. 

We collect the necessary motivation and background on max-slope pivot
rules and pivot rule polytopes in \Cref{sec:max-slope}. In
\Cref{sec:simplices}, we develop the combinatorics of max-slope arborescences
on simplices. In \Cref{sec:particles} we describe the correspondence to
colliding particles and prove \Cref{thm:main_asso}. In \Cref{sec:products}, we
discuss products of two simplices and prove \Cref{thm:constrainahedron}. In
\Cref{sec:higher} we discuss higher products of simplices and focus on the
simplex-times-cube case. 

\subsection*{Acknowledgements} We would like to thank Aenne Benjes, Jes\'{u}s
De Loera, Vincent Pilaud, Dasha Poliakova, Alex Postnikov, Germain Poullot,
and Hugh Thomas for insightful conversations. The last author would like to
thank the organizers of the workshop \emph{Combinatorics and Geometry of
Convex Polyhedra}, held at the Simons Center for Geometry and Physics, where
he first learned about constrainahedra. The first author was supported by the NSF GRFP and NSF DMS-1818969.

\section{Max-slope pivot rule polytopes}\label{sec:max-slope}

We recall the necessary background on max-slope pivot rules and the associated
pivot rule polytopes. We refer the reader to~\cite{PivPoly} for details and
proofs.

Let $P \subset \R^n$ be a convex polytope with vertex set $V(P) =
\{v_1,\dots,v_n\}$. We write $E(P)$ for the edges of $P$.  We call $c \in
\R^n$ \Def{edge-generic} with respect to $P$ if $\inner{c,u} \neq \inner{c,v}$
for all edges $uv \in E$. The vector $c$ defines an objective function $x
\mapsto \inner{c,x}$ and $(P,c)$ is a \Def{linear program} (LP).

We denote the unique maximizer of $c$ over $P$ by $\vopt$.  A
\Def{$c$-arborescence} is a map $\arb : V(P) \setminus \vopt \to V(P)$ such
that $\arb(v)$ is a $c$-improving neighbor of $v \neq \vopt$. If $c$ is clear
from the context, we simply call $\arb$ an arborescence. Arborescences encode
the behavior of memory-less pivot rules on the linear program $(P,c)$: From a
starting vertex $v \in V(P)$, the simplex algorithm constructs a monotonically
increasing path $v = v_0v_1\dots v_m = \vopt$ in the graph of $P$ that
satisfies $v_i = \arb(v_{i-1})$ for all $i=1,\dots,m$.

The max-slope pivot rule introduced in~\cite{PivPoly} generalizes the
well-known shadow-vertex simplex algorithm.  For $\wt \in \R^d$ generic and
linearly independent of $c$, we define the \Def{max-slope} arborescence
$\arb^w$ of $(P,c)$ by\footnote{As the notation suggests, $\argmax$ returns
the improving neighbor of $v$ that maximizes the given quantity.}
\begin{equation}\label{eqn:max_slope}
    \arb^w(v) \ := \ \argmax \left\{ 
        \frac{\inner{\wt,u - v}}{\inner{c,u - v}} : uv \in E(P),  \inner{c,u} >
        \inner{c,v} \right\} \, .
\end{equation}
A geometric interpretation for a max-slope arborescence can be given as
follows. Define the linear projection $\pi : \R^n \to \R^2$ by $\pi(x) =
(\inner{c,x},\inner{\wt,x})$. The projection of an edge $uv \in E$ to the
plane has a well-defined slope with respect to the $x$-axis and $\arb^\wt(v)$
selects the edge with maximal slope.

For a $c$-arborescence $\arb$ we define
\begin{equation}\label{eqn:GKZ}
    \PPGKZ(\arb) \ := \ \sum_{v \neq \vopt} \frac{\arb(v) - v}{\inner{c,\arb(v)
    - v}} \, .
\end{equation}
and the \Def{max-slope pivot rule polytope} of $(P,c)$ 
\[
    \PP(P,c) \ := \ \conv \big\{ \PPGKZ(\arb) : \arb \text{ $c$-arborescence
    of } (P,c) \big\} \, .
\]

This is a convex polytope, that geometrically encodes the various max-slope
arborescences of $(P,c)$. The \Def{support function} $h_Q : \R^n \to \R$
of a polytope $Q \subset \R^n$ is defined by $h_Q(\wt) := \max \{
    \inner{\wt,x} : x \in Q \}$. For $\wt \in \R^n$ we write $Q^\wt$ for the
face $Q \cap \{ x : \inner{\wt,x} = h_Q(\wt) \}$.

\begin{thm}[{\cite[Theorem~1.4]{PivPoly}}]\label{thm:PP}
    Let $P$ be a polytope and $c$ an edge-generic objective function. Then for
    every generic $\wt$
    \[
        \PP(P,c)^\wt \ = \ \{ \PPGKZ(\arb^w) \} \, .
    \]
    In particular, the vertices of $\PP(P,c)$ are in bijection with max-slope
    arborescences of $(P,c)$.
\end{thm}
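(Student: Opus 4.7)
The plan is to exploit the separable structure of $\PPGKZ$ with respect to the arborescence. A $c$-arborescence $\arb$ is specified by an independent choice of improving neighbor $\arb(v)$ for each non-optimal vertex $v$, and $\PPGKZ(\arb)$ is defined as a sum indexed by these vertices. Consequently the linear functional $\inner{\wt,\cdot}$ applied to $\PPGKZ(\arb)$ decomposes as a sum of terms each depending on only one of the choices $\arb(v)$, which reduces optimizing $\inner{\wt,\cdot}$ over $\PP(P,c)$ to $|V(P)|-1$ independent one-vertex problems.

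Concretely, I would first write
\[
    \inner{\wt, \PPGKZ(\arb)} \ = \ \sum_{v \neq \vopt} \frac{\inner{\wt,\arb(v)-v}}{\inner{c,\arb(v)-v}}
\]
and observe that, since $\PP(P,c) = \conv\{\PPGKZ(\arb)\}$, its support function at $\wt$ equals the maximum of this sum over all $c$-arborescences. The sum is maximized by choosing each $\arb(v)$ to maximize its own summand, and by \eqref{eqn:max_slope} this is exactly the prescription that defines $\arb^\wt(v)$. Hence the maximizer is the max-slope arborescence $\arb^\wt$. For the uniqueness needed to identify $\PP(P,c)^\wt$ as a single point, the genericity of $\wt$ (linearly independent from $c$) ensures that at every non-optimal vertex the slopes $\inner{\wt,u-v}/\inner{c,u-v}$ over improving neighbors $u$ are pairwise distinct, so the argmax in each term is attained uniquely. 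This yields $\PP(P,c)^\wt = \{\PPGKZ(\arb^\wt)\}$.

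The \emph{in particular} clause then follows from the standard fact that every vertex of a polytope is the unique maximizer of some generic linear functional: letting $\wt$ range over generic directions enumerates the vertices of $\PP(P,c)$ and, by the previous paragraph, matches them bijectively with the max-slope arborescences $\arb^\wt$. The only technical points I foresee are verifying (a) that $c$-arborescences exist at all, which is immediate from edge-genericity of $c$ since every non-optimal vertex has a $c$-improving neighbor, and (b) that the set of $\wt$ for which two improving edges at a common vertex yield equal slopes is a finite union of hyperplanes, hence avoidable by generic choice. I do not expect a serious obstacle here; the heart of the argument is the clean decomposition observation, and the main work lies in stating the required genericity precisely.
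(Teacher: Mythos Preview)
Your argument is correct and is exactly the natural one: the separability of $\inner{\wt,\PPGKZ(\arb)}$ into independent per-vertex terms reduces the optimization to the definition~\eqref{eqn:max_slope} of $\arb^\wt$, and genericity of $\wt$ (avoiding the finitely many hyperplanes where two slopes at a common vertex coincide) gives uniqueness. Note, however, that the present paper does not actually prove this theorem; it is quoted from~\cite{PivPoly} and stated here without proof, so there is no in-paper argument to compare against. Your sketch is essentially the argument one would expect in~\cite{PivPoly}. One small point worth making explicit for the ``in particular'' clause: the uniqueness you establish is that $\arb^\wt$ is the unique \emph{arborescence} (not merely the unique point) achieving the maximum, since any $\arb\neq\arb^\wt$ has at least one strictly smaller summand; this is what gives injectivity of $\arb\mapsto\PPGKZ(\arb)$ on max-slope arborescences and hence the bijection.
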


The affine hull $\aff(P)$ of $P$ is the inclusion-minimal affine subspace
containing $P$.

\begin{prop}\label{prop:PP_aff}
    Let $P \subset \R^n$ be a polytope and $c \in \R^n$ an edge-generic
    objective function. Then
    \[
        \aff(\PP(P,c)) \ = \ (-q + \aff(P)) \cap 
                        \{ x : \inner{c,x} = |V(P)| - 1 \} \, ,
    \]
    for any $q \in P$. In particular, $\dim \PP(P,c) = \dim P - 1$.
\end{prop}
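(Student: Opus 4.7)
First, I would establish the easy inclusion $\aff(\PP(P,c)) \subseteq L \cap \{x : \inner{c,x} = |V(P)|-1\}$, where $L := -q + \aff(P)$ is the direction subspace of $\aff(P)$ and hence independent of $q \in P$. Telescoping,
\[
    \inner{c,\PPGKZ(\arb)} \ = \ \sum_{v \neq \vopt} \frac{\inner{c,\arb(v)-v}}{\inner{c,\arb(v)-v}} \ = \ |V(P)|-1
\]
for every $c$-arborescence $\arb$, and each summand $(\arb(v)-v)/\inner{c,\arb(v)-v}$ is a scalar multiple of the edge vector $\arb(v)-v \in L$. The candidate affine subspace on the right has dimension $\dim L - 1 = \dim P - 1$ because edge-genericity of $c$ rules out $c|_L \equiv 0$.

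For the reverse inclusion I would exploit the Minkowski decomposition
\[
    \PP(P,c) \ = \ \sum_{v \neq \vopt} Q_v, \qquad Q_v \ := \ \conv\{e_{v,u} : u \in N_v\}, \qquad e_{v,u} \ := \ \frac{u-v}{\inner{c,u-v}},
\]
where $N_v$ denotes the set of $c$-improving graph-neighbors of $v$. This holds because arborescences correspond bijectively to independent choices $u_v \in N_v$ for each $v \neq \vopt$ (the selected out-edges automatically form a tree, since $c$ strictly increases along them toward $\vopt$), and $\conv$ distributes over Minkowski sums of finite sets.

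The crux is to show that at least one summand $Q_{v^*}$ already attains dimension $\dim P - 1$. I would choose $v^* \in V(P)$ to minimize $c$; edge-genericity then forces $\inner{c,u} > \inner{c,v^*}$ on every incident edge, so $N_{v^*}$ equals the full set of graph-neighbors of $v^*$ in $P$. Suppose some linear functional $f$ on $L$ takes a constant value $C$ on $\{e_{v^*,u} : u \in N_{v^*}\}$; then $f(u - v^*) = C \cdot \inner{c,u-v^*}$ for every neighbor $u$, so $f - C \cdot c|_L$ vanishes on all edge vectors at $v^*$. Since these vectors generate the tangent cone of $P$ at $v^*$ and therefore span $L$, we conclude $f \equiv C \cdot c|_L$. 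Hence the only linear functionals constant on $\aff(Q_{v^*})$ are multiples of $c$, which pins down $\dim \aff(Q_{v^*}) = \dim L - 1 = \dim P - 1$.

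Combining the two steps, $\dim \PP(P,c) \geq \dim Q_{v^*} = \dim P - 1$ since a Minkowski sum has dimension at least that of any summand, and together with the upper bound from the first paragraph, the two affine subspaces must agree. The main obstacle is the reverse inclusion; focusing on the minimum-$c$ vertex sidesteps the need to reason globally about how improving neighborhoods interact across $G(P)$, because at that single vertex \emph{all} edges are improving and the tangent-cone spanning property becomes directly available.
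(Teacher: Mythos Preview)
Your argument is correct, and it takes a genuinely different route from the paper's. The paper handles the easy inclusion exactly as you do, observes that the right-hand side has dimension $\dim P - 1$ because $c$ is nonconstant on $P$, and then simply \emph{cites} Theorem~1.4 of~\cite{PivPoly} for the matching lower bound $\dim \PP(P,c) = \dim P - 1$. You instead supply a self-contained proof of that lower bound: the Minkowski decomposition $\PP(P,c) = \sum_{v \neq \vopt} Q_v$ (valid because any independent choice of improving out-edges is automatically acyclic and hence an arborescence) reduces the problem to bounding a single summand, and focusing on the $c$-minimizing vertex $v^*$ lets you invoke the full tangent cone to see that only multiples of $c|_L$ are constant on $Q_{v^*}$. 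What your approach buys is independence from the external reference and a transparent geometric reason for the dimension; what the paper's approach buys is brevity, since the cited theorem is already doing heavier lifting elsewhere. One cosmetic remark: the word ``telescoping'' in your first display is a misnomer---each summand is simply $1$---but the computation itself is fine.
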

\begin{proof}
    Let $L$ denote the right-hand side of the given equation.  It follows
    from~\eqref{eqn:GKZ} that $\PPGKZ(\arb) \in L$ for every arborescence
    $\arb$ and hence $\aff(\PP(P,c)) \subseteq L$. Since $c$ is not constant
    on $P$, it follows that $L$ is of dimension $\dim P - 1$. Theorem
    1.4 in~\cite{PivPoly} shows that $\dim \PP(P,c) = \dim P - 1$, which
    finishes the proof.
\end{proof}

We record the following properties of max-slope pivot rule polytopes that
directly follow from \eqref{eqn:max_slope} and \eqref{eqn:GKZ}.
\begin{cor} \label{cor:PP_inv}
    Let $P \subset \R^n$ be a polytope and $c$ an edge-generic objective
    function.
    \begin{enumerate}[\rm (i)]
        \item Let $\wt \in \R^n$ generic and $\alpha \in \R$. Then $\arb^{\wt
            + \alpha c} = \arb^\wt$.
        \item If $h \in \R^n$ such that $x \mapsto \inner{h,x}$ is constant on
            $P$, then $\PP(P, c + h) = \PP(P,c)$.
        \item For any $b \in \R^n$, $\PP(P + b, c) = \PP(P,c)$.
        \item If $T$ is an invertible linear transformation, then
            $\Pi(TP,(T^{-1})^tc)  \ = \ T\Pi(P,c)$.
    \end{enumerate}
\end{cor}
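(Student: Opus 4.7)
The strategy is to verify each item directly from the definitions \eqref{eqn:max_slope} and \eqref{eqn:GKZ} and then appeal to \Cref{thm:PP}: once I know that the set of max-slope arborescences of $(P,c)$ and their images under $\PPGKZ$ transform in the claimed way, the pivot rule polytope $\PP(P,c)$, being their convex hull, must transform the same way.

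For (i), substituting $\wt + \alpha c$ for $\wt$ in \eqref{eqn:max_slope} replaces the ratio $\tfrac{\inner{\wt, u-v}}{\inner{c, u-v}}$ by itself plus the constant $\alpha$, and since $\alpha$ does not depend on the edge $uv$ the argmax is unchanged. For (ii), the hypothesis is equivalent to $\inner{h, u - v} = 0$ for any two points of $\aff(P)$, so the denominator $\inner{c + h, u - v}$ appearing in both \eqref{eqn:max_slope} and \eqref{eqn:GKZ} equals $\inner{c, u - v}$; edge-genericity, the set of improving edges, the argmax, and $\PPGKZ$ are therefore all unaffected. For (iii), the translation $v \mapsto v + b$ is a vertex bijection that preserves all differences $u - v$, so every quantity in \eqref{eqn:max_slope} and \eqref{eqn:GKZ} is literally unchanged.

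For (iv), the key identity is $\inner{(T^{-1})^t c, Tx} = \inner{c, x}$ for every $x$. Under the bijection $v \mapsto Tv$ between $V(P)$ and $V(TP)$, a pair $uv$ is a $c$-improving edge of $P$ iff $(Tu)(Tv)$ is a $(T^{-1})^t c$-improving edge of $TP$, and the substitution $\wt' = (T^{-1})^t \wt$ identifies the max-slope ratio on $(TP, (T^{-1})^t c)$ at weight $\wt'$ with the max-slope ratio on $(P, c)$ at weight $\wt$; this produces a bijection between max-slope arborescences. Applying \eqref{eqn:GKZ} then yields $\PPGKZ(\arb') = T\,\PPGKZ(\arb)$ for matched pairs, and convex hulls give the stated identity. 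None of the four assertions presents a genuine obstacle: each reduces to a one-line substitution once the effect of the operation on edges and inner products is written out, with the only auxiliary check, namely preservation of edge-genericity of the new objective, being immediate in every case.
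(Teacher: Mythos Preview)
Your proof is correct and matches the paper's approach exactly: the paper does not give a detailed proof but simply records that the corollary follows directly from \eqref{eqn:max_slope} and \eqref{eqn:GKZ}, and you have carried out precisely that verification. The appeal to \Cref{thm:PP} is a harmless but unnecessary detour, since $\PP(P,c)$ is by definition the convex hull of $\PPGKZ(\arb)$ over \emph{all} $c$-arborescences and your substitutions apply verbatim to every arborescence, not only the max-slope ones.
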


\newcommand\Simp[1][n-1]{\Delta_{#1}}%
\section{Max-slope pivot rules on simplices}\label{sec:simplices}

Let $P$ be an $(n-1)$-dimensional simplex and $c$ an edge-generic objective
function. Since any two simplices of the same dimension are affinely
isomorphic, we can appeal to \Cref{cor:PP_inv} and assume that the simplex
$P$ is the $(n-1)$-dimensional standard simplex 
\[
    \Simp \ := \ \conv( e_1, \dots, e_n )  \ \subset \ \R^n \, .
\]
Furthermore, up to relabelling vertices, we can assume that the evaluations $c_i
= \inner{c,e_i}$ for $i=1,\dots,n$ satisfy $c_1 < c_2 < \cdots < c_n$.

For $n \ge 1$, we write $[n] := \{1,2,\dots,n\}$. Since $\Simp$ has a complete
graph, every vertex $e_j$ with $j > i$ is a $c$-improving neighbor of $e_i$.
Thus, arborescences of $(\Simp,c)$ bijectively correspond to maps $\arb :
[n-1] \to [n]$ such that $\arb(i) > i$ for all $i$. At times, we will abuse
notation and write $(i,j) \in \arb$ if $\arb(i) = j$. Since every vertex can
choose an improving neighbor independently, there are exactly
$(n-1)!$ arborescences of $(\Simp,c)$. For the tetrahedron, it turns out that
of the six arborescences only five are max-slope arborescences and
\Cref{fig:tetra} prompts the following definition.

\begin{figure}[h]
    \begin{center}
        \includegraphics[width=12cm]{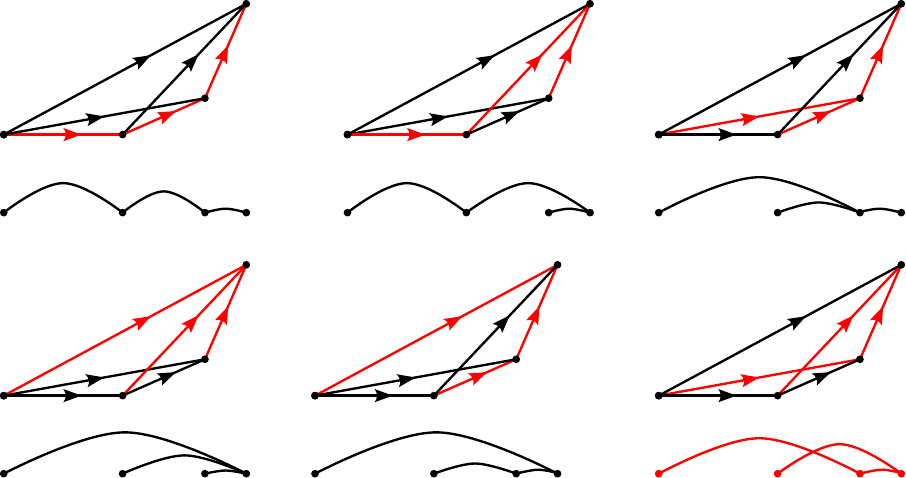}
    \end{center}
    \caption{Tetrahedron with orientations induced by a generic objective
    function. The six possible $c$-arborescences are shown in red together
    with a simpler depiction as maps $\arb : [3] \to [4]$ below. The
    bottom-right arborescence is not max-slope.}
    \label{fig:tetra}
\end{figure}

\begin{defn}[Noncrossing arborescence]
    An arborescence $\arb : [n-1] \to [n]$ is \Def{noncrossing} if  $\arb(j)
    \le \arb(i)$ for all $1 \le i < j < n$ with $j < \arb(i)$.
\end{defn}

In other words, $\arb$ is noncrossing if there are no $a < i < b < j$ such
that $\arb(a)  = b$ and $\arb(i) = j$:
\begin{center}
        \includegraphics[height=1cm]{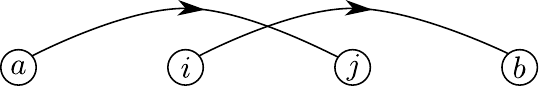}
\end{center}

\begin{thm}\label{thm:noncross}
    Let $\arb : [n-1] \to [n]$ be an arborescence. Then $\arb$ is a max-slope
    arborescence for $(\Simp,c)$ if and only if $\arb$ is noncrossing.
\end{thm}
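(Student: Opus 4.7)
The plan is to prove both directions separately, using the planar picture in which each vertex $e_k$ of $\Simp$ corresponds to the point $p_k := (c_k, w_k) \in \R^2$. In this picture, the max-slope condition $\arb^w(e_v) = e_u$ with $u > v$ asserts that the segment from $p_v$ to $p_u$ has strictly greater slope than every other segment from $p_v$ to $p_{u'}$ with $u' > v$. \emph{Necessity.} Suppose $\arb = \arb^w$ but $a < i < b < j$ with $\arb(a) = b$ and $\arb(i) = j$. From $\arb(a) = b$ the segments $p_a p_i$ and $p_a p_j$ are strictly less steep than $p_a p_b$, so $p_i$ and $p_j$ both lie strictly below the line $L$ through $p_a$ and $p_b$. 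The line $L'$ through $p_i$ and $p_j$ then lies strictly below $L$ on the interval $[c_i, c_j]$; in particular $L'(c_b) < L(c_b) = w_b$, placing $p_b$ strictly above $L'$. This translates to the slope of $p_i p_b$ strictly exceeding the slope of $p_i p_j$, contradicting $\arb(i) = j$.

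\emph{Sufficiency.} For the converse I argue by induction on $n$. If $\arb(1) = n$, then $\arb|_{[2, n-1]}$ is a noncrossing arborescence on $\{2, \ldots, n\}$, realized by some $(w_2, \ldots, w_n)$ by induction; I extend by $w_1 := M$ for $M > 0$ large, so that the slopes from $p_1$, asymptotically equal to $-M/(c_j - c_1)$, are maximized at $j = n$, while the slopes from $p_i$ for $i \ge 2$ are unchanged. Otherwise $\arb(1) = k < n$, and the noncrossing property forces $\arb([2, k-1]) \subseteq [2, k]$, since any $1 < i < k$ with $\arb(i) > k$ would cross with $(1, k)$. Hence $\arb$ splits into strictly smaller noncrossing sub-arborescences $\arb^{(1)}$ on $[1, k]$ and $\arb^{(2)}$ on $[k, n]$, realized inductively by weights $w^{(1)}$ and $w^{(2)}$. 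Using \Cref{cor:PP_inv}, I normalize so that $w_k^{(1)} = w_k^{(2)} = 0$ and further shift $w^{(1)}$ by a multiple of $c$ to arrange $w_i^{(1)} < 0$ for every $i < k$; neither operation disturbs the sub-arborescences. I then combine by setting $w_i := w_i^{(1)}$ for $i \le k$ and $w_j := \lambda w_j^{(2)}$ for $j > k$, with $\lambda > 0$ small.

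\emph{Verification and main obstacle.} For $i \ge k$ the slopes from $p_i$ involve only the $\lambda$-scaled $w^{(2)}$-coordinates, so $\arb^w$ agrees with $\arb^{(2)}$ there; for $i < k$ and $j \in (i, k]$ the slopes coincide with those from $w^{(1)}$, singling out $\arb^{(1)}(i) = \arb(i)$. The critical case is $i < k$ and $j > k$: the slope $(\lambda w_j^{(2)} - w_i^{(1)})/(c_j - c_i)$ tends to $-w_i^{(1)}/(c_j - c_i)$ as $\lambda \to 0^+$, which, because $w_i^{(1)} < 0$ and $c_j > c_k$, is strictly less than $-w_i^{(1)}/(c_k - c_i)$, i.e.\ the slope of $p_i p_k$ in $w^{(1)}$, which in turn is at most the slope of $p_i p_{\arb(i)}$ by the induction hypothesis. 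Finitely many such inequalities hold uniformly for any sufficiently small $\lambda > 0$, and a mild generic perturbation of $w$ ensures edge-genericity. The main obstacle is precisely this sign management in the gluing: the inductively produced weights must be sign-normalized compatibly across the split at $k$ before being combined, and \Cref{cor:PP_inv} is indispensable in allowing those normalizations without altering the sub-arborescences.
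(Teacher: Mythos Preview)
Your argument is correct. The necessity direction is the same convexity argument the paper makes via \Cref{lem:conv}, only phrased in the planar picture of the points $p_k=(c_k,w_k)$ rather than as a chain of slope inequalities; the content is identical.

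For sufficiency your route genuinely differs from the paper's. The paper locates an \emph{immediate leaf} $l$ (a leaf with $\arb(l)=l+1$, whose existence is \Cref{lem:immediate}), observes that every linear constraint $L_{i,k}(w)>0$ involving $w_l$ imposes only an upper bound on $w_l$, solves the remaining $(n-1)$-node system by induction, and then picks $w_l$ small enough. You instead split at $k=\arb(1)$: either $k=n$ and you peel off node~$1$ by sending $w_1\to+\infty$, or $k<n$ and you cut the arborescence into independent noncrossing pieces on $[1,k]$ and $[k,n]$, realize each inductively, normalize so that $w_k=0$ and the left block sits strictly below zero, and glue with a small scaling factor $\lambda$ on the right block. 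Your limiting calculation for the cross-slopes is the heart of the argument and is sound; the finitely many strict inequalities survive for all small $\lambda>0$, so the final perturbation remark is not actually needed. The paper's immediate-leaf approach has the advantage that it reappears verbatim in the product-of-simplices setting (\Cref{sec:products}), where rows or columns of immediate leaves drive the proofs; your divide-and-conquer gluing is arguably more transparent for the simplex case alone and produces an explicit recursive recipe for $w$, but it is less obviously portable to the grid setting.
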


\newcommand\arbA{\arb_1}%
\newcommand\arbAA{\arb_2}%
For the proof, we use a canonical decomposition of noncrossing arborescences;
see \Cref{fig:noncross}.

\begin{figure}[h]
    \begin{center}
    \includegraphics[width=12cm]{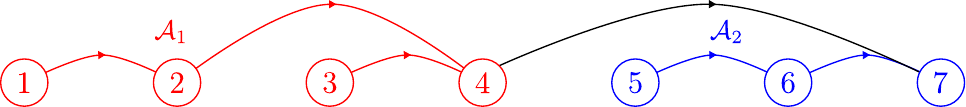}
    \end{center}
    \caption{A noncrossing arborescence on $7$ nodes. $r=4$ is minimal with
    $\arb(r) = 7$. The decomposition into $\arbA,\arbAA$ is obtained by
    restricting to $1,\dots,4$ and $5,\dots,7$.  The nodes $1,3,5$ are
    leaves and they are all immediate leaves.}
    \label{fig:noncross}
\end{figure}

\begin{lem}\label{lem:decomp}
    Let $\arb : [n-1] \to [n]$ be a noncrossing arborescence on $n \ge 2$
    nodes.  Let $r$ be minimal with $\arb(r) = n$. Then $\arbA : [r-1] \to
    [r]$ given by $\arbA(i) := \arb(i)$ for $i < r$ and $\arbAA : [n-r-1] \to
    [n-r]$ given by $\arbAA(i) := \arb(r+i) - r$ are noncrossing
    arborescences.  Moreover, $\arb$ is uniquely determined by
    $(\arbA,\arbAA)$.
\end{lem}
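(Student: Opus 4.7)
The plan is to unwrap the definitions and check the three required properties in turn: that $\arbA$ is a noncrossing arborescence on $[r]$, that $\arbAA$ is a noncrossing arborescence on $[n-r]$, and that the pair $(\arbA,\arbAA)$ recovers $\arb$. Only one step genuinely uses the noncrossing hypothesis; everything else is index bookkeeping.

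First I would verify that $\arbA \colon [r-1] \to [r]$ is well-defined, which amounts to showing $\arb(i) \le r$ for every $i < r$. This is the essential use of the hypothesis. Suppose for contradiction that $i < r$ and $\arb(i) > r$. Then the pair $(i,r)$ satisfies $i < r < \arb(i)$, so the noncrossing definition gives $\arb(r) \le \arb(i)$; since $\arb(r) = n$, this forces $\arb(i) = n$, contradicting the minimality of $r$. Hence $\arbA$ takes values in $[r]$, and since $\arbA(i) = \arb(i) > i$, it is an arborescence. Noncrossingness of $\arbA$ is inherited from $\arb$, because any violation of the noncrossing property for $\arbA$ would, under the inclusion $[r] \hookrightarrow [n]$, be a violation for $\arb$ as well.

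Next I would handle $\arbAA \colon [n-r-1] \to [n-r]$ defined by $\arbAA(i) = \arb(r+i) - r$. The inequality $\arbAA(i) > i$ is equivalent to $\arb(r+i) > r+i$, which holds because $\arb$ is an arborescence; and $\arbAA(i) \le n-r$ because $\arb$ maps into $[n]$. The shift $j \mapsto r+j$ is an order-preserving injection, so a hypothetical crossing $a < i < b < j$ for $\arbAA$ pulls back to a crossing $r+a < r+i < r+b < r+j$ for $\arb$, showing that $\arbAA$ is noncrossing.

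For the uniqueness claim, the integer $r$ is recoverable from $\arbA$ alone as (one plus) the size of its domain, so the reconstruction $\arb(i) = \arbA(i)$ for $i < r$, $\arb(r) = n$, and $\arb(r+i) = \arbAA(i) + r$ for $i \in [n-r-1]$ is forced by the construction. The only substantial obstacle in the whole argument is the contradiction used to establish $\arb(i) \le r$ for $i < r$: this is what prevents arcs originating in the left block from reaching beyond $r$, and it is the only point at which the noncrossing assumption really enters.
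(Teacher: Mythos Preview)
Your proof is correct. The paper does not supply a proof of this lemma, treating it as routine; your argument spells out exactly the verification that is needed, and the one point you flag --- that minimality of $r$ together with the noncrossing condition forces $\arb(i)\le r$ for $i<r$ --- is indeed the only place where anything beyond index bookkeeping is required.
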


\Cref{lem:decomp} allows us to count noncrossing arborescences. Recall that
the famous \Def{Catalan numbers} $C_k$ are given by $C_0 := 1$ and \begin{equation}\label{eqn:catalan}
    C_{k+1} \ := \ C_0C_k + C_1 C_{k-1} + \cdots + C_{k-1}C_1 + C_kC_0 
\end{equation}
for $k \ge 0$; see also~\cite{catalan}. \Cref{lem:decomp} shows that 
noncrossing arborescence $\arb$ on $n$ nodes are in bijection with pairs of
noncrossing arborescences $(\arb_1,\arb_2)$ of sizes $r$ and $n-r$,
respectively, for $r = 1,\dots,n-1$. We will make the connection to Catalan combinatorics more explicit
when we describe the combinatorial structure of $\PP(\Simp,c)$. For now we record:

\begin{cor}\label{cor:Catalan}
    There are exactly $C_{n}$ many noncrossing arborescences on $n+1 \ge 1$
    nodes.
\end{cor}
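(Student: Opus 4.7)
The plan is to prove this by induction on $n$, using \Cref{lem:decomp} as the recursive engine and matching the resulting recursion directly against the Catalan recursion \eqref{eqn:catalan}.

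First I would set up notation: let $a_k$ denote the number of noncrossing arborescences on $k$ nodes, so that $a_k$ counts maps $\arb:[k-1]\to[k]$ satisfying the noncrossing condition. The corollary then amounts to the identity $a_{n+1}=C_n$ for all $n\ge 0$. For the base case, when $n=0$ we have a single node, the empty map is vacuously a noncrossing arborescence, and indeed $a_1=1=C_0$.

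For the inductive step, assume $a_{k+1}=C_k$ for all $k<n$. \Cref{lem:decomp} asserts that a noncrossing arborescence on $n+1$ nodes is uniquely determined by the pair $(\arbA,\arbAA)$, where $r$ is the smallest index with $\arb(r)=n+1$, so $\arbA$ is a noncrossing arborescence on $r$ nodes and $\arbAA$ is a noncrossing arborescence on $n+1-r$ nodes. Moreover any such pair $(\arbA,\arbAA)$ arises in this way, since one can glue them by declaring $\arb(r):=n+1$ and using $\arbA$ on $[r-1]$ and a shift of $\arbAA$ on $\{r+1,\dots,n\}$; one checks that the resulting map is still noncrossing because the only new edge $(r,n+1)$ cannot be crossed (it ends at the top node and no edge starts strictly between $r$ and $n+1$ whose target exceeds $n+1$). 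As $r$ ranges over $1,\dots,n$ this gives
\[
    a_{n+1} \ = \ \sum_{r=1}^{n} a_r\, a_{n+1-r}\, .
\]

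Finally I would substitute the induction hypothesis $a_r=C_{r-1}$ and $a_{n+1-r}=C_{n-r}$ and reindex by $i=r-1$:
\[
    a_{n+1} \ = \ \sum_{r=1}^{n} C_{r-1} C_{n-r} \ = \ \sum_{i=0}^{n-1} C_i C_{n-1-i} \ = \ C_n \, ,
\]
where the last equality is the Catalan recursion \eqref{eqn:catalan} applied with $k=n-1$. This closes the induction. The only real subtlety, and where I would be most careful, is verifying that the gluing map producing $\arb$ from $(\arbA,\arbAA)$ is surjective onto noncrossing arborescences, i.e.\ that the decomposition in \Cref{lem:decomp} is genuinely a bijection between noncrossing arborescences on $n+1$ nodes and pairs of smaller noncrossing arborescences; once that is in hand, the counting is purely formal.
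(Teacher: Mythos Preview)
Your proposal is correct and follows essentially the same route as the paper: the paper also derives the count by invoking \Cref{lem:decomp} to set up a bijection between noncrossing arborescences on $n$ nodes and pairs of smaller ones of sizes $r$ and $n-r$, and then matches this against the Catalan recursion~\eqref{eqn:catalan}. Your added care about verifying surjectivity of the gluing is appropriate, since \Cref{lem:decomp} as stated only gives the decomposition and its injectivity; the paper asserts the bijection in the sentence preceding the corollary without further detail.
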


In light of \Cref{thm:noncross} and the previous result, this shows that there
are $C_{n} = \frac{1}{n+1} \binom{2n}{n} \sim 4^n/\sqrt{n^3 \pi}$ many
max-slope arborescences of an $n$-dimensional simplex, which has $n+1$
vertices and $\frac{n^2+n}{2}$ edges.

We call $k < n$ a \Def{leaf} of $\arb$ if there is no $i$ with $\arb(i) = k$.

\begin{lem}\label{lem:immediate}
    Let $\arb$ be a noncrossing arborescence on $n \ge 2$ nodes. Then there is
    a leaf $k$ with $\arb(k) = k+1$.
\end{lem}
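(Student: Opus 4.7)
The plan is to induct on the number of nodes $n$, using the decomposition of \Cref{lem:decomp} to reduce to strictly smaller noncrossing arborescences and to transfer an immediate leaf from one of the two pieces back to $\arb$. The base case $n = 2$ is trivial: the unique arborescence sends $1 \mapsto 2$, and $k = 1$ is a leaf with $\arb(k) = k+1$.

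For the inductive step with $n \ge 3$, I let $r$ be minimal with $\arb(r) = n$ and invoke \Cref{lem:decomp} to split $\arb$ into $\arbA : [r-1] \to [r]$ and $\arbAA : [n-r-1] \to [n-r]$. A key auxiliary fact, implicit in that lemma and reflecting the noncrossing condition applied to the distinguished arc $(r, n)$, is that $\arb(i) \le r$ whenever $i < r$; in particular, no arc of $\arb$ crosses between the blocks $[r]$ and $\{r+1, \dots, n\}$. If $r \ge 2$, I would apply the inductive hypothesis to $\arbA$, which is a noncrossing arborescence on $r < n$ nodes, to obtain an immediate leaf $j$ of $\arbA$, so that $\arb(j) = \arbA(j) = j+1$. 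The same $j$ is then a leaf of $\arb$: any preimage of $j$ smaller than $r$ would already be a preimage under $\arbA$, while any $i \ge r$ satisfies $\arb(i) > i \ge r > j$. If instead $r = 1$, then $\arbAA$ has $n - 1 \ge 2$ nodes, and induction supplies an immediate leaf $j$ of $\arbAA$; the index $k := 1 + j$ then satisfies $\arb(k) = k+1$, and it is a leaf of $\arb$ since $\arb(1) = n \ne k$ (as $k \le n-1$) and any preimage $i > 1$ of $k$ would shift to a preimage of $j$ under $\arbAA$.

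The main obstacle is simply the bookkeeping needed to verify that a leaf of one of the two pieces remains a leaf of $\arb$ itself, and this rests entirely on the blockwise separation of arcs recorded above. No new combinatorial input beyond \Cref{lem:decomp} and the noncrossing condition is needed.
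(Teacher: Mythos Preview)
Your proof is correct and follows the same inductive strategy as the paper, using the decomposition of \Cref{lem:decomp} to find an immediate leaf in one of the two pieces and lift it back to $\arb$. The paper's proof is considerably terser---it simply notes that by induction $\arbA$ or $\arbAA$ has an immediate leaf---whereas you explicitly carry out the case split on $r$ and verify that the leaf property survives in $\arb$, but the underlying argument is identical.
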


We call such a leaf an \Def{immediate leaf} of $\arb$.

\begin{proof}
    If $n = 2$, then $\arb = \{ (1,2) \}$ and $1$ is an immediate leaf.  For
    $n \ge 3$, let $(\arbA,\arbAA)$ be the decomposition of $\arb$ of
    \Cref{lem:decomp}.  By induction $\arbA$ or $\arbAA$ has an immediate
    leaf.
\end{proof}

\newcommand\C[1]{\tau({#1})}%
For a fixed $\wt \in \R^n$, we define the \Def{slope} of $1 \le i < j \le n$ by
\[
    \C{i,j} \ := \ \frac{\wt_j - \wt_i}{c_j - c_i} \, .
\]
Then $\arb : [n-1] \to [n]$ is a max-slope arborescence with respect to $\wt$ if
and only if for all $1 \le i < n$ 
\begin{equation}\label{eqn:coh}
    \C{i,\arb(i)} \ > \ \C{i,k} \quad \text{ for all } k > i \text{ and } k
    \neq \arb(i) \,
    .
\end{equation}

We will need the following convexity result.

\begin{lem}\label{lem:conv}
    For $1 \le r < s < t \le n$ we have 
    \[
        \C{r,t}  >  \C{r,s} \ \Longleftrightarrow \
        \C{s,t}  >  \C{r,t} 
        \quad \text{ and } \quad
        \C{r,t}  <  \C{r,s} \ \Longleftrightarrow \
        \C{s,t}  <  \C{r,t}.
    \]
\end{lem}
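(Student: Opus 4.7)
The plan is to observe that $\C{r,t}$ is a strict convex combination of $\C{r,s}$ and $\C{s,t}$ with positive weights, which immediately forces it to lie between them. Concretely, I would set $\alpha := c_s - c_r$ and $\beta := c_t - c_s$; both are strictly positive by the assumption $c_1 < c_2 < \cdots < c_n$, and $c_t - c_r = \alpha + \beta$.

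Next, splitting the numerator and denominator of $\C{r,t}$ at index $s$ gives
\[
    \C{r,t} \ = \ \frac{w_t - w_r}{c_t - c_r} \ = \ \frac{(w_s - w_r) + (w_t - w_s)}{\alpha + \beta} \ = \ \frac{\alpha\,\C{r,s} + \beta\,\C{s,t}}{\alpha + \beta}.
\]
Thus $\C{r,t}$ is a convex combination of $\C{r,s}$ and $\C{s,t}$ with strictly positive coefficients $\alpha/(\alpha+\beta)$ and $\beta/(\alpha+\beta)$, and therefore lies weakly between them. In particular, $\C{r,t} > \C{r,s}$ holds if and only if the second summand pulls the average up, i.e.\ $\C{s,t} > \C{r,s}$, which is in turn equivalent to $\C{s,t} > \C{r,t}$. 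The argument for the reverse inequality is identical after flipping inequality signs.

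There is no real obstacle; the lemma is essentially the elementary fact that the slopes of the three sides of a planar triangle obey a monotonicity constraint determined by the $x$-coordinates of the vertices. The only thing to double-check is that $\alpha$ and $\beta$ are positive (so that we have a \emph{convex} combination and the implications between the strict inequalities are equivalences and not just one-directional), which follows directly from the normalization $c_1 < c_2 < \cdots < c_n$ fixed in \Cref{sec:simplices}.
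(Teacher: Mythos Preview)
Your proof is correct and is essentially identical to the paper's own argument: both establish the convex-combination identity $\C{r,t} = \frac{c_s-c_r}{c_t-c_r}\,\C{r,s} + \frac{c_t-c_s}{c_t-c_r}\,\C{s,t}$ and read off the equivalences from the fact that the weights are strictly positive. The only cosmetic difference is that you name the weights $\alpha,\beta$ and spell out the chain of equivalences a bit more explicitly.
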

\begin{proof}
    Note that 
    \[
        \C{r,t} \ = \ \frac{c_s-c_r}{c_t-c_r} \C{r,s} +
        \frac{c_t-c_s}{c_t-c_r} \C{s,t} \, ,
    \]
    which implies $\max(\C{r,s},\C{s,t}) \ge \C{r,t} \ge 
    \min(\C{r,s},\C{s,t})$. If $\C{r,s} \neq \C{s,t}$, then the inequalities
    are strict.
\end{proof}

\begin{proof}[Proof of \Cref{thm:noncross}]
    Let $\wt \in \R^n$ be a weight such that $\arb$ is the max-slope
    arborescence with respect to $\wt$. Assume that there are $1 \le a < i < b
    < j \le n$ such that $(a,b)$ and $(i,j)$ are crossing arcs of $\arb$. We
    claim that the following inequalities hold
    \[
        \C{i,b} \ < \ \C{i,j} \ < \ \C{b,j} \ < \ \C{a,j} \ < \ \C{a,b}  \, .
    \]
    The first and last inequality follow from~\eqref{eqn:coh} and the fact
    that $\arb(i)=j$ and $\arb(a)=b$. The second and third inequality follow
    from \Cref{lem:conv} for $i < b < j$ and $a < b < j$, respectively.
    In particular, we have $\C{a,b} > \C{i,b}$. On the other hand $\arb(a) = b$
    implies $\C{a,b} > \C{a,i}$ and \Cref{lem:conv} gives $\C{i,b} >
    \C{a,b}$. This is a contradiction.

    Now let $\arb$ be a noncrossing arborescence on $n$ nodes and recall that
    $c_1 < \cdots < c_{n}$. We show the existence of a suitable $\wt =
    (\wt_1,\dots,\wt_{n})$ by induction on the number of nodes $n$.  Using
    \eqref{eqn:coh}, we need to verify that for every $1 \le i < k \le n$ with
    $k \neq \arb(i)$
    \[
        0 \ < 
            (c_k - c_i) \wt_{\arb(i)}
          - (c_{\arb(i)} - c_i) \wt_k
          + (c_{\arb(i)} - c_k) \wt_i  \ =: \ L_{i,k}(\wt)  \, .
    \]
    For $n \le 2$, these conditions are vacuous. For $n > 2$, let $l$ be an
    immediate leaf, whose existence is guaranteed by \Cref{lem:immediate}. We
    claim that the coefficient of $\wt_l$ in every inequality $L_{i,k}(\wt) > 0$
    is nonpositive.  Since $l$ is leaf, there is no $i$ with $\arb(i) = l$ and
    $\wt_l$ only occurs in inequalities $L_{i,k}(\wt) > 0$ with $i = l$ or $k =
    l$. For $i < k = l$, the claim is clearly true. For $l = i < k$, the fact
    that $l$ is an immediate leaf and hence $\arb(l) = l+1$ shows that $k > l+1$
    and thus $c_{\arb(l)} - c_k < 0$.  This means that the inequalities that
    involve $\wt_l$ only give upper bounds on $\wt_l$ in terms of the other
    $\wt_i$. The remaining inequalities determine the coherence of the
    arborescence $\overline{\arb}$ obtained by restricting $\arb$ to $[n] \setminus l$.
    Since $\overline{\arb}$ is noncrossing on $n-1$ nodes, this system of strict
    inequalities is feasible by induction and we find a suitable $\wt_l$
    satisfying the inequalities involving $l$.
\end{proof}

\Cref{lem:conv} gives a simple criterion for finding an immediate leaf given a
generic $\wt$. Define the maximal slope at $1 \le i < n$ by $\tau(i) := \max
\{ \tau(i,j) : j > i \}$.
\begin{cor}\label{cor:imm_leaf}
    Let $\wt \in \R^n$ and $\arb^\wt$ the max-slope arborescence for
    $(\Simp,c)$. If $\tau(i) \ge \tau(k)$ for all $k$, then $i$ is an
    immediate leaf of $\arb^\wt$. 
\end{cor}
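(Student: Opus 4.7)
The plan is to verify separately the two conditions for $i$ to be an immediate leaf of $\arb^\wt$: (a) $i$ is a leaf, meaning no $j$ satisfies $\arb^\wt(j) = i$, and (b) $\arb^\wt(i) = i+1$. Both follow by contradiction from the maximality of $\tau(i)$, combined with the characterization~\eqref{eqn:coh} of max-slope arborescences and the convexity result in \Cref{lem:conv}.

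For (a), suppose $j$ satisfies $\arb^\wt(j) = i$; then necessarily $j < i$. By~\eqref{eqn:coh} we have $\C{j,i} > \C{j,k}$ for every $k > j$ with $k \neq i$. Fix any $k$ with $i < k \le n$. Applying \Cref{lem:conv} to the triple $j < i < k$ with $\C{j,i} > \C{j,k}$ yields $\C{j,k} > \C{i,k}$, and hence $\C{j,i} > \C{i,k}$. Taking the maximum over such $k$, we obtain $\C{j,i} > \tau(i)$. Since $\tau(j) \ge \C{j,i}$, this forces $\tau(j) > \tau(i)$, contradicting the hypothesis.

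For (b), let $j^* := \arb^\wt(i)$, so that $\tau(i) = \C{i,j^*}$. Suppose for contradiction that $j^* > i+1$. Then~\eqref{eqn:coh} gives $\C{i,j^*} > \C{i,i+1}$. Applying \Cref{lem:conv} to the triple $i < i+1 < j^*$, we deduce $\C{i+1,j^*} > \C{i,j^*} = \tau(i)$, so $\tau(i+1) \ge \C{i+1,j^*} > \tau(i)$, again contradicting maximality of $\tau(i)$.

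The only delicate point is bookkeeping the direction of the inequalities in \Cref{lem:conv} and choosing the correct triple $(r,s,t)$ at each step. Aside from this, the argument is a short contradiction built from the two elementary facts supplied by the characterization of max-slope arborescences and the convex-combination identity underlying \Cref{lem:conv}.
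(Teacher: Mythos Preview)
Your proof is correct and follows essentially the same strategy as the paper: both parts are handled by contradiction using~\eqref{eqn:coh} together with \Cref{lem:conv}. The only minor difference is in part~(a): the paper first establishes $\arb^\wt(i)=i+1$ and then, for a hypothetical $h$ with $\arb^\wt(h)=i$, applies \Cref{lem:conv} to the specific triple $h<i<i+1$ using $\tau(i)=\C{i,i+1}$; you instead apply \Cref{lem:conv} to $j<i<k$ for an arbitrary $k>i$ and take the maximum, making your argument for (a) independent of (b).
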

\begin{proof}
    Assume that $\arb(i) = j > i+1$, then $\tau(i,j) > \tau(i,i+1)$ and
    \Cref{lem:conv} implies $\tau(i+1,j) > \tau(i,j)$, which contradicts the
    maximality of $\tau(i)$. Hence $j = i+1$. Assume that there is $h < i$
    with $\arb^\wt(h) = i$. Then $\tau(h,i) > \tau(h,i+1)$ and again by
    \Cref{lem:conv} $\tau(h,i+1) > \tau(i,i+1) = \tau(i)$, which again
    contradicts maximality.
\end{proof}

\section{Particles with locations and velocities}\label{sec:particles}

Consider $n \ge 2$ labelled particles on the real line. Every particle $i =
1,2,\dots,n$ has a constant velocity $-c_i < 0$. We assume that the
velocities are distinct and, up to relabelling particles, satisfy $0 < c_1 <
c_2 < \cdots < c_n$. The velocities are fixed throughout.  Assume further that
at time $t = 0$, the particles are at locations $-\wt_1 \le -\wt_2 \le \cdots
\le -\wt_n$. \Cref{fig:particles} illustrates the setup.

\begin{figure}[h]
    \begin{center}
        \includegraphics[width=.95\textwidth]{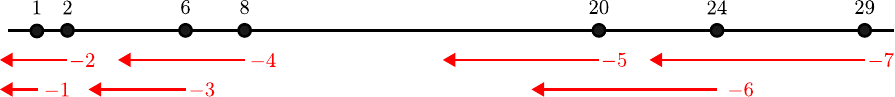}
    \end{center}
    \caption{Locations (in black) $-\wt = (1,2,6,8,20,24,29)$ and velocities
    (in red) \mbox{$-c = (-1,-2,-3,-4,-5,-6,-7)$} that produce the collision
    pattern  of \Cref{fig:noncross}.}
\label{fig:particles}
\end{figure}

Once the particles start moving from their initial locations $-\wt =
(-\wt_1,\dots,-\wt_n)$ with velocities $-c = (-c_1,\dots,-c_n)$, they will
eventually collide and merge. If particles $i < j$ collide, then particle $i$
is absorbed  by the faster particle $j$, which continues at velocity $-c_j$.
For $t \gg 0$, the only remaining particle is $n$; see \Cref{fig:evo}.

\begin{figure}[h]
    \begin{center}
        \includegraphics[width=.95\textwidth]{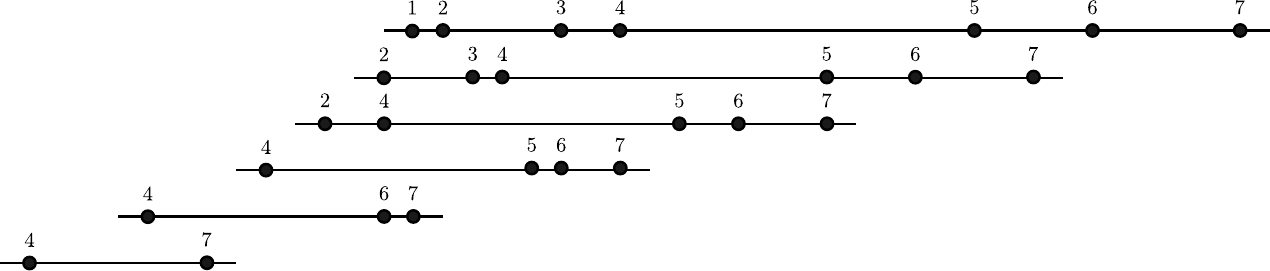}
    \end{center}
    \caption{Shows the evolution of the particles of \Cref{fig:particles} over
    time.}
    \label{fig:evo}
\end{figure}

For now we assume that the locations $-\wt$ are chosen generically, so that at
most two particles collide at any given point in time.  We record the
collisions by a map $\colpat^{-\wt} : [n-1] \to [n]$ that we call a
\Def{collision pattern}: If particle $i$ gets absorbed by particle $j$ for the
initial locations $-\wt$, then we set $\colpat^{-\wt}(i) := j$.  The connection
to max-slope arborescences of simplices is as follows.

\begin{thm}\label{thm:cs_max_arb}
    Let $n \ge 2$ and $c \in \R^n$ with $0 < c_1 < \cdots < c_n$.  For $\arb :
    [n-1] \to [n]$ and $\wt \in \R^n$, the following are equivalent.
    \begin{enumerate}[\rm i)]
        \item $\arb = \colpat^{-(\wt - \alpha c)}$ is a collision pattern for
            $n$ particles with velocities $-c$ and locations $-(\wt - \alpha
            c)$ for some $\alpha \ge 0$.
        \item $\arb = \arb^\wt$ is a max-slope arborescence of $(\Simp,c)$
            with respect to weight $\wt$.
    \end{enumerate}
\end{thm}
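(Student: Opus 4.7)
My plan is to unify both directions of the equivalence under the following single claim: whenever $\wt'$ is \emph{admissible}, meaning $-\wt'_1\le-\wt'_2\le\cdots\le-\wt'_n$, the collision pattern $\colpat^{-\wt'}$ agrees with the max-slope arborescence $\arb^{\wt'}$ of $(\Simp,c)$. Since $\arb^{\wt-\alpha c} = \arb^{\wt}$ for every $\alpha$ by \Cref{cor:PP_inv}(i), and the transformation law $\tau_{\wt-\alpha c}(i,j) = \tau_{\wt}(i,j)-\alpha$ shows that $\wt-\alpha c$ is admissible as soon as $\alpha \ge \max_{i<j}\tau_{\wt}(i,j)$, the claim directly yields (ii)$\Rightarrow$(i) by choosing such an $\alpha\ge 0$, while (i)$\Rightarrow$(ii) is the claim applied to $\wt' = \wt-\alpha c$ combined once more with \Cref{cor:PP_inv}(i).

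The central claim I would prove by induction on $n$. A direct calculation shows that, in the absence of interference, particles $i<j$ with initial locations $-\wt'_i,-\wt'_j$ and velocities $-c_i,-c_j$ meet at time $t_{i,j} = -\tau_{\wt'}(i,j) \ge 0$, so the earliest collision occurs at the globally $\tau_{\wt'}$-maximal pair. By \Cref{cor:imm_leaf} applied to $\wt'$, the index $i^*$ achieving $\max_i\tau(i)$ is an immediate leaf of $\arb^{\wt'}$, so this global maximum is attained at the adjacent pair $(i^*, i^*+1)$, which rules out any interference and identifies the first actual collision with the arc $i^*\mapsto i^*+1$ of $\arb^{\wt'}$. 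For the inductive step I would remove $i^*$: the positions of the remaining $n-1$ particles at time $t_1 = -\tau_{\wt'}(i^*,i^*+1)$ coincide with the initial locations of a new system on the $(n-2)$-simplex indexed by $\{1,\dots,n\}\setminus\{i^*\}$ with weight $\wt''_k := \wt'_k - c_k\,\tau_{\wt'}(i^*,i^*+1)$. Admissibility of $\wt''$ follows from the global maximality of $\tau_{\wt'}(i^*,i^*+1)$; because $i^*$ is a leaf of $\arb^{\wt'}$, the restriction of $\arb^{\wt'}$ is a well-defined arborescence on the smaller simplex which, via \eqref{eqn:coh} together with \Cref{cor:PP_inv}(i), equals $\arb^{\wt''}$. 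The inductive hypothesis then identifies the subsequent collisions with this restriction.

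The main technical hurdle is checking that the predicted pairwise collision times actually govern the dynamics, so that the candidate first collision cannot be preempted by another pair merging earlier. This is handled cleanly by \Cref{cor:imm_leaf}, which forces the globally $\tau$-maximal pair to be adjacent and therefore uninterruptible, while the remaining bookkeeping — admissibility of $\wt''$, the identification of the restricted arborescence, and the passage from $\wt$ to $\wt-\alpha c$ — is routine given the invariance provided by \Cref{cor:PP_inv}(i).
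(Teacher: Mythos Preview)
Your proposal is correct but takes a genuinely different route from the paper. The paper's proof is direct and non-inductive: after reducing to admissible locations, it computes the pairwise collision time $t_{ij} = -\tau(i,j)$ and asserts in one line that particle $i$ is absorbed by $\argmin_{j>i} t_{ij} = \argmax_{j>i}\tau(i,j) = \arb^\wt(i)$, so $\colpat^{-\wt}$ and $\arb^\wt$ agree node by node. The interference issue you worry about is handled implicitly by the observation that the leftmost position among all particles $j>i$ is always occupied by an \emph{alive} particle (a dead particle was absorbed by something faster, hence further left), so the first particle to reach $i$ really is the pairwise argmin.

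Your approach instead peels off collisions one at a time: you use \Cref{cor:imm_leaf} to locate the globally first collision at an adjacent pair $(i^*,i^*+1)$, match it with the immediate-leaf arc of $\arb^{\wt'}$, delete $i^*$, and recurse on the shifted weight $\wt''=\wt'-\tau(i^*,i^*+1)\,c$. This is more elaborate but has the virtue of making the ``no interference'' step completely explicit, and it mirrors the reduction strategy used later for products of simplices. The paper's argument is shorter and avoids any appeal to \Cref{cor:imm_leaf}; yours is more self-contained about the dynamics. Both are valid.
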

\begin{proof}
    Let us first assume that $-\wt_1 < \cdots < -\wt_n$.  If we fix $j > i$
    and disregard all other particles for the moment, then the time $t_{ij}$
    of collision of $i$ and $j$ satisfies
    \begin{equation}\label{eqn:tij}
        -\wt_i - t_{ij} c_i \ = \ -\wt_j - t_{ij} c_j \qquad
        \Longleftrightarrow \qquad t_{ij} \ = \ \frac{-(\wt_j - \wt_i)}{c_j -
        c_i} \, . %
    \end{equation}
    By construction, $i$ will be absorbed by particle $j$ if $t_{ij}$ is
    minimal among all $t_{ik}$ with $k > i$. Thus, for $i < n$, we observe
    \begin{equation}\label{eqn:cs_max_arb}
        \colpat^{-\wt}(i) \ = \
        \argmin \left\{ 
        \frac{-(\wt_{j} - \wt_i)}{c_{j} - c_i} : j > i \right \}
        \ = \ 
        \argmax \left\{ 
        \frac{\wt_{j} - \wt_i}{c_{j} - c_i} : j > i \right \} \ = \
        \arb^\wt(i)
        \, ,
    \end{equation}
    where the last equation is the definition of $\arb^\wt$
    in~\eqref{eqn:max_slope}. This proves the equivalence for strictly
    increasing $-\wt$. Note that both sides of \eqref{eqn:cs_max_arb} are
    invariant under replacing $\wt$ by $\wt - \alpha c$ for any $\alpha \in
    \R$.  For $\alpha \ge 0$ sufficiently large, $-(\wt - \alpha c)$ is
    strictly increasing and thus satisfies our conditions on particle
    locations. 
\end{proof}

The proof of \Cref{thm:cs_max_arb} emphasizes that for any $\wt \in \R^n$,
there is an $\alpha \ge 0$ such that $-(\wt - \alpha c)$ is strictly
increasing and the associated collision pattern is independent of the choice of
$\alpha$.  Therefore, we will use noncrossing arborescence and collision plan
interchangeably and will exclusively use the notation $\arb^\wt$. 

In the language of collision patterns, we can also interpret the decomposition of
\Cref{lem:decomp}. If $\arb^\wt$ is the collision pattern obtained from locations
$-\wt$, then there is a time $t_1$ at which there are only two particles left.
One of these two particles is clearly $n$, the other is the last particle that
is absorbed by $n$, that is, the minimal $r \ge 1$ with $\arb^\wt(r) = n$. In
the time between $t=0$ and $t=t_1$, the particles $1,\dots,r-1$ get absorbed
by $r$ and the particles $r+1,\dots,n-1$ get absorbed by $n$. The
corresponding collision patterns are precisely $\arbA$ and $\arbAA$,
respectively.

The particle perspective also allows us to give an interpretation of the
support function of $\PP(\Simp,c)$.  For given weakly increasing $-\wt$, the
\Def{lifespan} of a particle $i < n$ is the timespan until it is absorbed by
some particle $j > i$.

\begin{prop}\label{prop:av_lifespan}
    For $-\wt$ weakly increasing, $-h_{\PP(\Simp,c)}(\wt)$ is the 
    sum of lifespans of all particles $i=1,\dots,n-1$.
\end{prop}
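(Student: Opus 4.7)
The plan is to combine \Cref{thm:PP}, which identifies $h_{\PP(\Simp,c)}(\wt)$ with $\inner{\wt,\PPGKZ(\arb^\wt)}$, with the explicit formula \eqref{eqn:tij} for the pairwise collision times. Concretely, I would start by observing that the support function evaluates to
\[
h_{\PP(\Simp,c)}(\wt) \ = \ \inner{\wt,\PPGKZ(\arb^\wt)} \ = \ \sum_{i=1}^{n-1} \frac{\wt_{\arb^\wt(i)} - \wt_i}{c_{\arb^\wt(i)} - c_i} \, ,
\]
which is just expanding \eqref{eqn:GKZ} and pairing with $\wt$. Thus the support function is automatically a sum of $n-1$ contributions, one per non-maximal vertex $e_i$, and the proof will boil down to matching each contribution with the lifespan of the corresponding particle.

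Next, I would identify each summand with the lifespan of particle $i$. By \Cref{thm:cs_max_arb}, $\arb^\wt$ is precisely the collision pattern for the particle system with weakly increasing initial locations $-\wt$ and velocities $-c$. Because every particle moves with constant velocity until it is absorbed, and because particle $\arb^\wt(i)$ itself only ever travels at velocity $-c_{\arb^\wt(i)}$, the time at which particle $i$ disappears is exactly the two-particle collision time \eqref{eqn:tij}, that is,
\[
t_{i,\arb^\wt(i)} \ = \ \frac{\wt_i - \wt_{\arb^\wt(i)}}{c_{\arb^\wt(i)} - c_i} \ = \ -\frac{\wt_{\arb^\wt(i)} - \wt_i}{c_{\arb^\wt(i)} - c_i} \, .
\]
Summing over $i=1,\dots,n-1$ and comparing with the previous display immediately gives $-h_{\PP(\Simp,c)}(\wt)$ equal to the total lifespan, proving the proposition.

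The only point that requires care, and which I would flag as the main thing to justify, is precisely this identification of the actual lifespan with $t_{i,\arb^\wt(i)}$. Between time $0$ and its absorption, particle $i$ may itself swallow slower particles $h < i$, but each such absorption preserves its velocity $-c_i$; symmetrically, the absorber $\arb^\wt(i)$ is, by definition of the collision pattern, still present at the moment it meets $i$, and its trajectory has been $-\wt_{\arb^\wt(i)} - c_{\arb^\wt(i)} t$ throughout. Hence the absorption of $i$ takes place exactly when these two affine trajectories meet, which is the time computed from \eqref{eqn:tij}. The weak monotonicity hypothesis on $-\wt$ ensures $t_{i,\arb^\wt(i)} \ge 0$, so each lifespan is a nonnegative real number and the sum has the advertised sign.
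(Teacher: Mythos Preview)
Your argument is correct for generic $\wt$, but the paper takes a more direct route that avoids going through $\arb^\wt$ altogether. Rather than invoking \Cref{thm:PP} and \Cref{thm:cs_max_arb}, the paper simply computes
\[
h_{\PP(\Simp,c)}(\wt) \ = \ \max_{\arb}\sum_{i<n}\frac{\wt_{\arb(i)}-\wt_i}{c_{\arb(i)}-c_i}
\ = \ -\sum_{i<n}\min_{j>i}\frac{-(\wt_j-\wt_i)}{c_j-c_i}
\]
by exchanging the maximum with the sum, which is legitimate because each summand depends only on the single choice $\arb(i)$. The inner minimum is then recognised directly as the lifespan of particle $i$ via \eqref{eqn:tij}.

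The practical difference is that the paper's computation is valid for every weakly increasing $-\wt$, whereas your route leans on $\arb^\wt$ and the collision pattern, both of which are only defined for generic $\wt$. The proposition is later applied to the non-generic locations $\wt^{r,s}$ of \Cref{prop:rs_loc}, so as written your proof has a small gap there; it is easily closed by noting that both $h_{\PP(\Simp,c)}(\wt)$ and $\sum_{i<n}\min_{j>i}t_{ij}$ are continuous in $\wt$, but you should say so explicitly.
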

\begin{proof}
    We compute
    \[
        h_{\PP(\Simp,c)}(\wt)  =   \max_{\arb} \inner{\wt,\PPGKZ(\arb)}
 =  - \min_{\arb} \inner{-\wt,\PPGKZ(\arb)}
         =  -\min_{\arb} \sum_{i < n} \frac{-(\wt_{\arb(i)} -
        \wt_i)}{c_{\arb(i)} -
        c_i}
         =  - \sum_{i < n} \min_{j>i} \frac{-(\wt_{j} - \wt_i)}{c_{j} -
        c_i}
    \]
    and thus $-h_{\PP(\Simp,c)}(\wt)$ is the sum of lifespans of all particles
    $i < n$.
\end{proof}

\newcommand\br{\mathcal{B}}%
Collisions of particles can be encoded in terms of \emph{bracketings}.  A
\Def{bracket} is a subset of $[n]$ of the form $[i,j] := \{i, i+1,\dots,j\}$
for $i \le j$. A \Def{bracketing} $\br$ is a collection of distinct brackets
$B_1,\dots,B_m$ such that for every $1 \le r < s \le m$, $B_r \subset B_s$ or
$B_s \subset B_r$ or $B_s \cap B_r = \emptyset$.  A bracket represents
particles that have collided with each other at some point in time.  If $B_r
\subset B_s$ are contained in a bracketing $\br$, then this means at some
point all the particles in $B_r$ have collided and later all the particles in
$B_s$ will have collided. As there is ultimately only a single particle
left, every bracketing has to contain $[1,n]$. When convenient, we will also
assume that $\br$ contains all singleton brackets $[i,i]$. The
\Def{associahedron} $\Ass[n-2]$ is the set of bracketings of $[n]$ ordered by
\emph{reverse} inclusion. The unique \emph{maximal} element is $\{[1,n]\}$.
The minimal elements are the complete bracketings of $[n]$, that is,
bracketings with $n-1$ brackets. \Cref{fig:ass} gives an example.

\begin{figure}[h]
    \begin{center}
        \includegraphics[height=3.5cm]{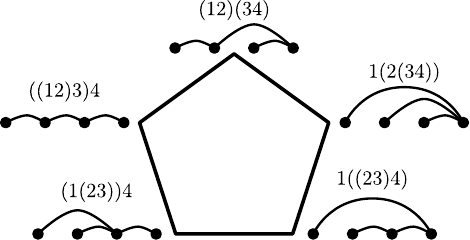}
    \end{center}
    \caption{The $\PP(\Simp[3],c)$ is combinatorially
    isomorphic to $\Ass[2]$, which is a pentagon. The figure shows the
    corresponding labelling.}
    \label{fig:ass}
\end{figure}

\begin{thm}\label{thm:main_asso}
    Let $P$ be an $(n-1)$-dimensional simplex and $c$ a generic objective
    function. Then $\PP(P,c)$ is combinatorially isomorphic to the
    $(n-2)$-dimensional associahedron $\Ass$.
\end{thm}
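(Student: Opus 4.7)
The plan is to identify vertices of $\PP(P,c)$ with complete bracketings of $[n]$ using the collision-pattern interpretation of \Cref{thm:cs_max_arb}, then upgrade to a full face-lattice isomorphism. Define a map $\Phi$ sending a complete bracketing $\br$ to the noncrossing arborescence $\arb = \Phi(\br)$ with $\arb(i) := \max B_i$, where $B_i$ is the smallest bracket of $\br$ containing both $i$ and some $j > i$. The inverse interprets the brackets of $\Phi^{-1}(\arb)$ as the clusters appearing during any time-evolution consistent with the collision pattern $\arb$; the noncrossing property guarantees these clusters are intervals and that the collection is independent of the timing (i.e.\ of the underlying $\wt$). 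Bijectivity follows by induction on $n$: the top-level split $\{[1,r],[r+1,n]\}$ of $\br$ matches the decomposition $\arb \mapsto (\arbA,\arbAA)$ of \Cref{lem:decomp} for $r = \min\{i : \arb(i) = n\}$, and the counts agree by \Cref{cor:Catalan}.

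For each bracketing $\br$ of $[n]$ (not necessarily complete), define
\[
F_\br \ := \ \conv\bigl\{\PPGKZ(\arb) : \Phi^{-1}(\arb) \text{ refines } \br\bigr\}.
\]
To realize $F_\br$ as a face of $\PP(P,c)$, I would construct an explicit weight $\wt_\br$ via hierarchical scale separation: arrange the initial particle locations $-\wt_\br$ so that particles sharing an innermost bracket of $\br$ are bunched within distance $\eps$, those sharing a next-level bracket within distance $\eps^{1/2}$, and so on outward. Under this scaling, all collisions forced by the inner brackets happen before any outer-bracket collision, and generic perturbations of $\wt_\br$ break the ties inside each bracket independently, realizing precisely the max-slope arborescences whose bracketing refines $\br$. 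By \Cref{thm:PP}, this identifies $\PP(P,c)^{\wt_\br} = F_\br$.

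It then remains to check that $\br \mapsto F_\br$ is an order-reversing bijection from bracketings of $[n]$ (under inclusion) to the face lattice of $\PP(P,c)$. The implication $\br \supseteq \br' \Rightarrow F_\br \subseteq F_{\br'}$ is immediate from the definition, and $\{[1,n]\}$ corresponds to the whole polytope by \Cref{prop:PP_aff}. Surjectivity of $\br \mapsto F_\br$ I would establish by induction on $n$ using \Cref{lem:decomp}: the top-level split of any bracketing writes $F_\br$ as an affine image of a product $F_{\br_1} \times F_{\br_2}$ of faces of lower-dimensional max-slope polytopes, and the inductive hypothesis matches these with products of smaller associahedra — exactly the recursive facial structure of $\Ass[n-2]$. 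The main obstacle is this face-product identification: verifying that the scale-separated weight $\wt_\br$ cleanly decouples the two subfaces so that no spurious arborescences appear at the interface between inner and outer scales. The technical tools here are \Cref{lem:conv}, which controls the slope function $\tau$ across nested collisions, and \Cref{cor:imm_leaf}, which pins down what happens at each scale independently.
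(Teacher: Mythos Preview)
Your approach differs substantially from the paper's. You attempt to construct a face $F_\br$ for \emph{every} bracketing $\br$ via hierarchical scale separation of particle locations, and then argue that $\br \mapsto F_\br$ is a poset isomorphism through an inductive face-product decomposition. The paper instead constructs faces only for the \emph{facets} of $\Ass$, i.e.\ the single-bracket bracketings $\{[r,s],[1,n]\}$, using the explicit weights $\wt^{r,s}$ of \Cref{prop:rs_loc}. It then verifies that the vertex--facet incidences match those of $\Ass$ via \Cref{prop:vertices_in_face} and \Cref{lem:bracket}, and finishes with a general polytope lemma (\Cref{lem:comb_iso}): a vertex bijection between equi-dimensional polytopes that carries each facet to a face is automatically a combinatorial isomorphism. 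The paper's route buys a much shorter argument with no $\eps$-analysis and no recursion on the face lattice; your route, if completed, would give an explicit description of every face and its supporting weight.

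There is, however, a genuine gap in your surjectivity step. Even granting that each $F_\br$ is a face of $\PP(P,c)$ and that it decomposes as a product of lower-dimensional pivot polytopes, this tells you about the internal structure of the $F_\br$'s, not that they exhaust the face lattice. Your inductive argument never starts from an arbitrary face of $\PP(P,c)$, so nothing rules out $\PP(P,c)$ having additional facets not of the form $F_{\{[r,s],[1,n]\}}$; knowing that the $F_\br$ form a sublattice isomorphic to $\Ass$ is not enough. This is exactly the obstruction that \Cref{lem:comb_iso} handles (via connectedness of the dual graph), and the paper also notes an alternative: show $\PP(\Simp,c)$ is simple and appeal to Blind--Mani. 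Once you invoke either device, the scale-separation construction for non-facet bracketings becomes unnecessary --- checking vertex--facet incidences alone already suffices.
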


As a first step towards proving \Cref{thm:main_asso}, we translate collision
patterns into complete bracketings. \Cref{fig:bracket} illustrates the
correspondence.

\begin{figure}[h]
    \begin{center}
    \includegraphics[height=2cm]{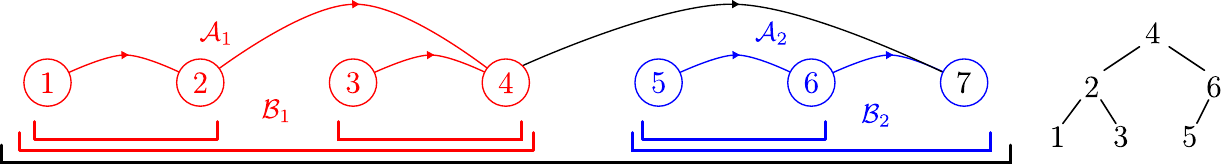}
    \end{center}
    \caption{Left: the arborescence of \Cref{fig:noncross} together with the
    associated bracketing. Right: the associated partial order.}
    \label{fig:bracket}
\end{figure}

\begin{thm}\label{thm:arb_to_cs}
    Collision patterns on $n$ particles are in bijection with complete bracketings.
\end{thm}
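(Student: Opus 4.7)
The plan is to prove the bijection recursively, leveraging \Cref{lem:decomp} on the arborescence side and establishing a matching decomposition on the bracketing side. By \Cref{thm:cs_max_arb} and \Cref{thm:noncross}, collision patterns on $n$ particles are exactly the noncrossing arborescences $\arb : [n-1] \to [n]$, so the target is a bijection between these and complete bracketings of $[n]$.

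First, I would establish that every complete bracketing $\br$ of $[n]$ with $n \ge 2$ admits a unique decomposition
\[
    \br \ = \ \{[1,n]\} \ \sqcup \ \br_1 \ \sqcup \ \br_2,
\]
where for a unique $r \in \{1,\dots,n-1\}$, $\br_1$ is a complete bracketing of $[1,r]$ and $\br_2$ is a complete bracketing of $[r+1,n]$. The proof is a counting argument: the maximal brackets of $\br$ strictly contained in $[1,n]$ form a partition of $[1,n]$ into $k$ intervals $[a_j,b_j]$, and the brackets of $\br$ contained in $[a_j,b_j]$ form a complete bracketing of $[a_j,b_j]$, contributing $b_j - a_j$ brackets. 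Summing gives $|\br| = 1 + \sum_j (b_j - a_j) = n - k + 1$, and this equals $n-1$ precisely when $k = 2$.

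Next, I would define the bijection $\Phi$ from complete bracketings of $[n]$ to noncrossing arborescences on $[n]$ by induction on $n$. For $n = 1$, both sides are singletons (the empty bracketing and the empty arborescence). For $n \ge 2$, given $\br = \{[1,n]\} \sqcup \br_1 \sqcup \br_2$ as above, let $\arbA := \Phi(\br_1)$ and $\arbAA := \Phi(\br_2)$, and let $\Phi(\br)$ be the noncrossing arborescence $\arb$ obtained via \Cref{lem:decomp}, i.e., $\arb(r) = n$, $\arb(i) = \arbA(i)$ for $i < r$, and $\arb(r+i) = \arbAA(i) + r$ for $1 \le i \le n-r-1$. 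Since \Cref{lem:decomp} states that $\arb$ is uniquely determined by the pair $(\arbA, \arbAA)$ and the split index $r$, the inverse is straightforward: given a noncrossing arborescence $\arb$, extract $r$ as the minimal index with $\arb(r) = n$, decompose into $(\arbA, \arbAA)$, and set $\Phi^{-1}(\arb) := \{[1,n]\} \sqcup \Phi^{-1}(\arbA) \sqcup \bigl(r + \Phi^{-1}(\arbAA)\bigr)$, where $r + \br_2$ shifts each bracket of $\br_2$ by $r$. Since both decompositions are unique and the two recursive constructions are mutually inverse, $\Phi$ is a bijection.

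The main obstacle is the bracketing-side decomposition, i.e., verifying $k = 2$; everything else is bookkeeping that mirrors \Cref{lem:decomp}. As a sanity check, the induction and \Cref{cor:Catalan} both recover the Catalan count $C_{n-1}$ on the two sides. I would close with the natural interpretation of the bijection: for a collision pattern $\arb^\wt$ obtained from locations $-\wt$ and velocities $-c$, the bracket associated with each arc $(i, \arb^\wt(i))$ is exactly the cluster of particles that has formed immediately after the $i$--$\arb^\wt(i)$ collision, and the noncrossing property guarantees that every such cluster is an interval.
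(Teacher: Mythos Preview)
Your proof is correct and follows essentially the same inductive strategy as the paper: both sides decompose via the minimal $r$ with $\arb(r)=n$ (equivalently, via removing $[1,n]$ from the bracketing), and \Cref{lem:decomp} glues the pieces back. The paper simply asserts that $\br \setminus \{[1,n]\}$ splits into two complete bracketings, whereas you supply the counting argument for $k=2$; just be careful that the maximal proper brackets only partition $[1,n]$ once you allow singleton brackets (as the paper does ``when convenient''), otherwise elements not covered by any size-$\ge 2$ bracket are missed.
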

\begin{proof}
    For $n \le 2$, there is only one collision pattern and one complete
    bracketing. Let $\arb$ be a collision pattern on $n > 2$ particles. Let
    $\arbA : [r-1] \to [r]$ and $\arbAA : [n-r-1] \to [n-r]$ be the
    decomposition of \Cref{lem:decomp}. By induction, $\arbA, \arbAA$ give
    rise to bracketings $\br_1, \br_2$ on $r$ and $n-r$ particles,
    respectively. After the particles in $\br_2$ are relabelled from $[n-r]$
    to $\{r+1,\dots,n\}$, we get a complete bracketing $\br = \br_1 \cup \br_2
    \cup \{[1,n]\}$.

    Conversely, if $\br$ is a complete bracketing, then $\br \setminus \{[1,n]
    \}$ decomposes into two complete bracketings $\br_1, \br_2$ of $[1,r]$ and
    $[r+1,n]$. By induction they give rise to noncrossing arborescences
    $\arbA, \arbAA$ and \Cref{lem:decomp} completes the proof.
\end{proof}

For the next result it will be convenient to extend the definition of
collision patterns by setting $\arb(0) := n$.

\begin{lem}\label{lem:bracket}
    Let $\arb$ be a collision pattern with associated bracketing $\br$. The
    bracket $[a,b]$ is contained in $\br$ if and only if $\arb(a-1) \ge b$ and
    $\arb^k(a) = b$ for some $k \ge 1$.
\end{lem}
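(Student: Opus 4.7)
The plan is to induct on $n$, using the decomposition $\arb = (\arbA, \arbAA)$ from \Cref{lem:decomp}. Let $r$ be minimal with $\arb(r) = n$; by the construction in the proof of \Cref{thm:arb_to_cs}, the associated bracketing splits as $\br = \br_1 \cup \br_2 \cup \{[1,n]\}$, where $\br_1$ is the bracketing of $[1, r]$ coming from $\arbA$ and $\br_2$ (after shifting by $r$) is the bracketing of $[r+1, n]$ coming from $\arbAA$. The base $n = 2$ is immediate: the only nontrivial bracket $[1, 2]$ lies in $\br$, and the conditions $\arb(0) = 2 \ge 2$ and $\arb(1) = 2$ both hold.

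Before running the case split, I would extract the structural fact that the noncrossing property together with the minimality of $r$ forces $\arb(i) \in \{i+1,\dots, r\}$ for every $i \in [1, r-1]$ and $\arb(j) \in \{j+1,\dots,n\}$ for every $j \in [r+1, n-1]$: otherwise either the minimality of $r$ is contradicted directly, or the arc $(i, \arb(i))$ crosses $(r, n)$. Consequently the iterated trajectory $a, \arb(a), \arb^2(a), \dots$ of any $a \in [1, r]$ stays inside $[1, r]$ until it reaches $r$ and only then jumps to $n$, while the trajectory of any $a \in [r+1, n-1]$ stays inside $[r+1, n]$. This is the key fact that lets one transfer the iteration condition between $\arb$ and its sub-patterns.

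With this in hand, I would split into three cases based on the position of $[a, b]$ relative to the split at $r$. First, $[a, b] = [1, n]$ lies in $\br$ and both RHS conditions hold via $\arb(0) = n$ and the fact that every trajectory eventually reaches $n$. Second, if $[a, b] \subseteq [1, r]$ or $[a, b] \subseteq [r+1, n]$, membership in $\br$ is equivalent to membership in $\br_1$ or $\br_2$; since $\arb$ coincides with $\arbA$ (respectively with $\arbAA$ after shifting indices by $r$) on the relevant domain, the RHS conditions translate verbatim to the inductive statement and the equivalence follows by the induction hypothesis. Third, if $a \le r < b$ and $(a, b) \ne (1, n)$, then $[a, b] \notin \br$, and I would show the RHS also fails: a trajectory starting at $a \le r$ never visits any element of $\{r+1,\dots,n-1\}$, so $\arb^k(a) = b$ fails for $r < b < n$; and if $b = n$ with $a \ge 2$, then $a - 1 \le r - 1$ forces $\arb(a - 1) \ne n$ by minimality of $r$, so condition $\arb(a-1) \ge b$ fails.

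The main bookkeeping hurdle will be the boundary sub-case $b = r$ inside the second case. There one has to check that the $\arb$-trajectory of any $a \in [1, r-1]$ really does pass through $r$, which follows because the trajectory is strictly increasing and trapped in the finite set $[1, r]$ before jumping to $n$, and that it coincides with the $\arbA$-trajectory up to that moment. The apparent mismatch between the conventions $\arb(0) = n$ and $\arbA(0) = r$ is harmless, since both values dominate every $b$ that can appear in the corresponding inductive sub-case. Once these details are pinned down, the three cases together close the induction.
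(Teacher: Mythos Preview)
Your proposal is correct and follows essentially the same route as the paper: both argue by induction on $n$ via the decomposition $\arb=(\arbA,\arbAA)$ and the associated split $\br=\br_1\cup\br_2\cup\{[1,n]\}$, reducing the claim to the sub-patterns and checking the boundary brackets directly. Your write-up is in fact more explicit than the paper's---you isolate the structural fact that the $\arb$-trajectory of any $a\le r$ stays in $[1,r]$ before jumping to $n$, and you give a clean three-way case split on the position of $[a,b]$ relative to $r$, whereas the paper compresses the converse into a single noncrossing argument on the arcs $(a-1,\arb(a-1))$ and $(r,n)$; the content is the same.
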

\begin{proof}
    Assume that $[a,b] \in \br$ and let $r$ be minimal with $\arb(r) = n$.
    Unless $[a,b] = [1,n]$, $[a,b] \in \br_1$ or $[a,b] \in \br_2$, where $\br
    = \br_1 \cup \br_2 \cup \{[1,n]\}$ is the decomposition of the proof of
    \Cref{thm:arb_to_cs}. By induction, we can assume that $[a,b] = [1,r]$ or
    $[a,b] = [r+1,n]$. In the former, $r$ is the sink of $\arb_1$ and hence
    $\arb^{k}(1) = r$ for some $k$ and $\arb(0) = n \ge r$. In the latter
    case, $n$ is the unique sink of $\arb$ and $\arb((r+1)-1) = n$.

    Conversely, let $a<b$ with $\arb(a-1) \ge b$ and $\arb^k(a) = b$ for some
    $k \ge 1$. If $[a,b] = [1,n]$, then $[a,b] \in \br$. Otherwise, let $r \ge
    1$ be minimal with $\arb(r) = n$.  Since $\arb(a-1) \ge b$, noncrossing
    implies $1 \le r \le a$ or $r \ge \arb(a-1) \ge b$. If $r = a$, then this
    implies $b = n$. If $1 \le a-1$ or $r \ge \arb(a-1) \ge b$, then we
    can replace $\arb$ with $\arbA$ or $\arbAA$, respectively and the claim
    follows by induction.
\end{proof}

We note that every weakly decreasing set of locations $w \in \R^n$ yields a
partial bracketing.  For example, as we saw earlier, $-\wt \in \R_{\ge0} c + \R
\1$ yields the coarsest bracketing $\br = \{ [1,n] \}$. This point of view
allows us to explicitly construct locations for single brackets.

\begin{prop}\label{prop:rs_loc}
    Let $1 \le r < s \le n$ with $(r,s) \neq (1,n)$.  The bracketing $\br = \{
        [r,s], [1,n]\}$ is obtained from locations $-\wt$ if and only if 
    \[
        -\mu \wt + \gamma \1 - \alpha c \ = \ -\wt^{r,s} \ := \
        (c_1,\dots,c_{r-1},c_s,\dots,c_s,c_{s+1},\dots,c_n) 
    \]
    for some $\gamma, \alpha, \mu \in \R$ with $\alpha \ge 0$ and $\mu > 0$.
    Moreover, $-h_{\PP(\Simp,c)}(\wt^{r,s}) = n-1-(s-r)$.
\end{prop}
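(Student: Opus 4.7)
My plan is to verify each direction by pushing the collision dynamics through explicitly and then compute the support value via \Cref{prop:av_lifespan}.

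First I would verify the ``if'' direction. The key check is that $-\wt^{r,s}$ directly produces the bracketing $\{[r,s],[1,n]\}$: the particles $r,\ldots,s$ all start at the common position $c_s$ and so merge immediately into particle $s$ at time $t_1=0$, producing the bracket $[r,s]$; the remaining particles $\{1,\ldots,r-1,s,\ldots,n\}$ sit at locations $c_i$ with velocities $-c_i$ and hence all arrive at the origin at the same time $t_2=1$, producing the bracket $[1,n]$. By the invariance arguments from the proof of \Cref{thm:cs_max_arb}, the collision pattern is unchanged under $\wt \mapsto \wt+\gamma\1$, under $\wt \mapsto \wt - \alpha c$, and under $\wt \mapsto \mu \wt$ with $\mu>0$ (the last just rescales time by a positive factor). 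Hence every $\wt$ with $\mu \wt = \wt^{r,s} + \gamma\1 - \alpha c$ also yields $\{[r,s],[1,n]\}$.

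For the ``only if'' direction I would assume that $-\wt$ yields the bracketing $\{[r,s],[1,n]\}$, and write $0 \le t_1 < t_2$ for the times of the two simultaneous events. Using \eqref{eqn:tij}, the simultaneity of the merge of $r,\ldots,s$ at time $t_1$ is equivalent to $\wt_i = \wt_r + t_1(c_r - c_i)$ for all $i\in[r,s]$, while the simultaneity of the final merge at time $t_2$ is equivalent to $\wt_i = \wt_s + t_2(c_s - c_i)$ for all $i \in \{1,\ldots,r-1,s,\ldots,n\}$. Substituting the relation $\wt_s = \wt_r + t_1(c_r - c_s)$ obtained from the first system into the second and rearranging gives, after cancellation,
\[
    \wt \ = \ (t_2-t_1)\,\wt^{r,s} \, - \, t_1\, c \, + \, (\wt_s + t_2 c_s)\,\1 \, .
\]
Setting $\mu := 1/(t_2-t_1) > 0$, $\alpha := t_1/(t_2-t_1) \ge 0$, and $\gamma := (\wt_s + t_2 c_s)/(t_2-t_1)$ yields the claimed equation $-\mu\wt + \gamma\1 - \alpha c = -\wt^{r,s}$.

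The step I expect to require the most care is ruling out additional simultaneous collisions, so that the bracketing is \emph{exactly} $\{[r,s],[1,n]\}$ and not some finer refinement. Plugging the derived formula back into \eqref{eqn:tij} reduces this to checking signs of certain convex combinations of $t_1, t_2$: one verifies that pairs within $[r,s]$ collide exactly at $t_1$, pairs within $\{1,\ldots,r-1,s,\ldots,n\}$ collide exactly at $t_2$, and every pair $i\in[r,s-1], j\in\{s+1,\ldots,n\}$ satisfies $t_1 < t_{i,j} < t_2$, while every pair $i\in\{1,\ldots,r-1\}, j\in[r,s-1]$ has $t_{i,j} > t_2$ and so these two particles never actually meet because $j$ has already been absorbed at time $t_1$.

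Finally, to compute $-h_{\PP(\Simp,c)}(\wt^{r,s})$ I would invoke \Cref{prop:av_lifespan} with the weakly increasing locations $-\wt^{r,s}$. Particles $r,\ldots,s-1$ are absorbed at $t=0$ and contribute lifespan $0$; each of the $n-1-(s-r)$ remaining indices in $\{1,\ldots,r-1,s,\ldots,n-1\}$ survives until the terminal collision at $t=1$ and contributes lifespan $1$. The sum of lifespans is therefore $n-1-(s-r)$.
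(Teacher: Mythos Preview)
Your proof is correct and follows essentially the same approach as the paper's: normalize the locations using the invariances (time shift by $\alpha c$, translation by $\gamma\1$, and positive scaling by $\mu$) so that the two collision events occur at times $0$ and $1$, then read off the resulting weight vector and compute the sum of lifespans via \Cref{prop:av_lifespan}. Your version is somewhat more explicit---you solve the collision equations directly to obtain the closed formula $\wt = (t_2-t_1)\,\wt^{r,s} - t_1\,c + (\wt_s + t_2 c_s)\,\1$, whereas the paper simply replaces $-\wt$ by $\tfrac{1}{t_2-t_1}(-\wt - t_1 c)$ and observes the conditions this forces---but the underlying argument is the same. Your additional paragraph verifying that no extraneous collisions occur (so that the bracketing is exactly $\{[r,s],[1,n]\}$ rather than a refinement) is a worthwhile point of rigor that the paper leaves implicit.
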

\begin{proof}
    If $-\wt$ yields the bracketing $\{[r,s],[1,n]\}$, then there are exactly
    two points in time $t_1 < t_2$ when collisions happen. At time $t_1$, the
    particles $[r,s]$ simultaneously collide and at $t_2$, all the remaining
    particles $1,\dots,r-1,s,s+1,\dots,n$ simultaneously collide.  Using
    \Cref{cor:PP_inv}, we can replace $-\wt$ by $-\wt' = \frac{1}{t_2-t_1}(-
    \wt - t_1 c)$ and assume that $t_1 = 0$ and $t_2 = 1$.  
    Hence $-\wt'$ satisfies $\wt'_r = \cdots = \wt'_s$ and $\wt'_i -
    c_i = \gamma$ for some $\gamma$ and all $i < r$ and $i \ge s$. As for the
    sum of lifespans with respect to locations $-\wt^{r,s}$, we note that
    particles $r,\dots,s-1$ have lifespan $0$ and particles $i < r$ and $s \le
    i < n$ have lifespan $1$. Thus $-h_{\PP(\Simp,c)}(\wt^{r,s}) = n-1-(s-r)$
    by \Cref{prop:av_lifespan}.
\end{proof}

Note that the faces described in \Cref{prop:rs_loc} have a unique normal
relative to the affine hull of $\PP(\Simp,c)$ and hence are facets.
We write $[a,b]^c$ for the complement $[n] \setminus [a,b]$.

\begin{prop}\label{prop:vertices_in_face}
    Let $\arb$ be a collision pattern and $1 \le r < s \le n$ with $(r,s) \neq
    (1,n)$. Then $\PPGKZ(\arb)$ is contained in the face $\PP(\Simp,c) \cap \{
        x : \inner{-\wt^{r,s},x} = n-1-(s-r) \}$ if and only if $\arb$ maps
    $[r,s-1]$ into $[r,s]$ and $[r,s-1]^c$ into $[r,s-1]^c$.
\end{prop}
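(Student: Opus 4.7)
The approach is to compute $\inner{-\wt^{r,s}, \PPGKZ(\arb)}$ directly from \eqref{eqn:GKZ} and determine exactly when it attains the value $n-1-(s-r)$ supplied by \Cref{prop:rs_loc}. Since that proposition gives $-h_{\PP(\Simp,c)}(\wt^{r,s}) = n-1-(s-r)$, the hyperplane $\{x : \inner{-\wt^{r,s},x} = n-1-(s-r)\}$ is the minimizing hyperplane of $\inner{-\wt^{r,s},\cdot}$ on $\PP(\Simp,c)$, so $\PPGKZ(\arb)$ lies on the face if and only if this minimum is realized.

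Writing $u_i := (-\wt^{r,s})_i$, so that $u_i = c_s$ for $i \in [r,s]$ and $u_i = c_i$ otherwise, equation \eqref{eqn:GKZ} gives
\[
    \inner{-\wt^{r,s}, \PPGKZ(\arb)} \ = \ \sum_{i=1}^{n-1} T_i, \qquad T_i \ := \ \frac{u_{\arb(i)}-u_i}{c_{\arb(i)}-c_i}.
\]
The plan is to partition $[n-1] = [r-1] \sqcup [r,s-1] \sqcup [s,n-1]$ and bound each $T_i$ from below by a short case analysis on the position of $\arb(i)$. For $i \in [r-1]$, a direct check shows $T_i = 1$ whenever $\arb(i) \notin [r,s-1]$, while $\arb(i) \in [r,s-1]$ forces $u_{\arb(i)} = c_s > c_{\arb(i)}$ and hence $T_i > 1$; so $T_i \ge 1$ with equality iff $\arb(i) \in [r,s-1]^c$. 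For $i \in [r,s-1]$, we have $u_i = c_s$, yielding $T_i = 0$ when $\arb(i) \in [r,s]$ and $T_i \in (0,1)$ when $\arb(i) > s$; so $T_i \ge 0$ with equality iff $\arb(i) \in [r,s]$. Finally, for $i \in [s,n-1]$ we have $\arb(i) > i \ge s$, so $\arb(i) \notin [r,s]$ and $T_i = 1$ automatically. Summing the three lower bounds yields $(r-1) + 0 + (n-s) = n-1-(s-r)$, matching the target.

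Equality throughout this chain is therefore equivalent to $\arb([r,s-1]) \subseteq [r,s]$ together with $\arb([r-1]) \subseteq [r,s-1]^c$; combining the latter with the automatic inclusion $\arb([s,n-1]) \subseteq [s+1,n] \subseteq [r,s-1]^c$ recovers the single condition $\arb([r,s-1]^c \cap [n-1]) \subseteq [r,s-1]^c$, which is precisely the stated hypothesis. The main task is careful bookkeeping in the case analysis above, and the only mild subtlety is that the boundary value $\arb(i) = s$ lies simultaneously in $[r,s]$ and in $[r,s-1]^c$; this keeps the two set-theoretic conditions mutually consistent at the boundary and requires no separate treatment.
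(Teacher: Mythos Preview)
Your proof is correct and follows essentially the same approach as the paper: compute $\inner{-\wt^{r,s},\PPGKZ(\arb)}$ as a sum of slope terms $T_i$, bound each term below by a case split on whether $i$ and $\arb(i)$ lie in $[r,s-1]$, $[r,s]$, or its complement, and identify the equality conditions. The paper's version is more terse (it appeals to \Cref{prop:rs_loc} for the global lower bound rather than deriving it termwise), but the substance is identical.
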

\begin{proof}
    Since $-\wt^{r,s}$ is weakly increasing, we get from \Cref{prop:rs_loc}
    that $\inner{-\wt^{r,s},\PPGKZ(\arb)} \ge n-1-(s-r)$. Equality holds if
    and only if $\frac{-(\wt^{r,s}_{\arb(i)} - \wt^{r,s}_i)}{c_{\arb(i)} -
    c_i}$ is minimal for all $i < n$. For $i < r$, this is the case if
    $\arb(i) < r$ or $\arb(i) \ge s$, in which case the value is $1$. For $r
    \le i < s$, the minimal value $0$ is attained if and only if $\arb(i) \le
    s$. 
\end{proof}

Bracketings that contain of a single bracket $[r.s] \neq [1,n]$ correspond to
the facets of any polytopal realization of $\Ass$. \Cref{prop:rs_loc} shows that
$\PP(\Simp,c)$ has a unique facet for every such bracketing.
\Cref{prop:vertices_in_face} gives a combinatorial characterization of which
vertices are contained in those facets and is a decisive step in showing that
$\Ass$ and $\PP(\Simp,c)$ have the same vertex-facet incidences. However
at this point, we cannot exclude that $\PP(\Simp,c)$ has more facets. For that,
we need the following general result.

\begin{lem}\label{lem:comb_iso}
    Let $P$ and $Q$ be two polytopes of the same dimension. Assume that $f
    : V(P) \to V(Q)$ is a bijection on vertices such that for every facet $F
    \subset P$, there is a face $G \subset Q$ with $f(V(F)) = V(G)$. Then
    $P$ and $Q$ are combinatorially isomorphic.
\end{lem}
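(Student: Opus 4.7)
The plan is to extend the vertex bijection $f$ to an inclusion-preserving, dimension-preserving injection $\phi : L(P) \to L(Q)$ of face lattices, and then to show by an inductive dual-graph argument that $\phi$ is surjective.

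First I would define $\phi$. For each face $F$ of $P$, let $\phi(F)$ be the intersection of the faces $G_{F'} \subset Q$ supplied by the hypothesis, taken over all facets $F'$ of $P$ containing $F$. A short vertex-set calculation, using that $f$ is a bijection and that a face of a polytope is determined by its vertex set, yields $V(\phi(F)) = f(V(F))$. This makes $\phi$ well defined, injective and inclusion preserving, with $\phi(\emptyset) = \emptyset$ and $\phi(P) = Q$. Since any maximal chain in $L(P)$ has length $d+2$ (where $d = \dim P = \dim Q$) and its $\phi$-image is a strictly ascending chain of the same length in $L(Q)$, this image must be saturated. Every face of $P$ lies on some maximal chain, so $\phi$ preserves dimension and, in particular, sends facets of $P$ injectively to facets of $Q$.

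The main obstacle is surjectivity on facets, as $Q$ could a priori carry additional facets outside the image of $\phi$. I would handle this by induction on $d$, with the cases $d \le 1$ immediate. For the inductive step, fix any facet $F$ of $P$. The pair $(F, \phi(F))$ together with $f|_{V(F)}$ again satisfies the hypothesis of the lemma: each facet of $F$ is a $(d-2)$-face of $P$ whose $\phi$-image is a $(d-2)$-face of $Q$ contained in $\phi(F)$, hence a facet of $\phi(F)$. The inductive hypothesis then provides a combinatorial isomorphism $L(F) \cong L(\phi(F))$ via $f|_{V(F)}$.

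Now let $G'$ be any facet of $Q$ adjacent to $\phi(F)$ along a ridge $R$. Through the restricted isomorphism, $R = \phi(R')$ for a unique ridge $R'$ of $F$, which is also a ridge of $P$. Since every ridge of $P$ lies in exactly two facets of $P$, we have $R' = F \cap F''$ for a unique $F'' \neq F$, and then $\phi(F'')$ is a facet of $Q$ containing $R$. But $R$ also lies in exactly two facets of $Q$, forcing $\phi(F'') = G'$, so $G'$ is in the image of $\phi$. Since the dual graph of $Q$ is connected, iterating this step shows every facet of $Q$ lies in the image of $\phi$. Finally, every face of $Q$ is an intersection of facets of $Q$, so $\phi$ is surjective on all of $L(Q)$ and yields the desired combinatorial isomorphism.
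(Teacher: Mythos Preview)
Your proof is correct and follows essentially the same approach as the paper's: extend $f$ to an inclusion- and rank-preserving injection of face lattices, then use connectivity of the dual graph of $Q$ together with the fact that each ridge lies in exactly two facets to obtain surjectivity on facets. Your explicit induction on the dimension cleanly justifies the key step---that a ridge of $Q$ contained in some $\phi(F)$ already lies in the image of $\phi$---which the paper's proof asserts rather tersely.
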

\begin{proof}
    Every face is uniquely identified by its set of vertices and we can view
    the face lattice $L(P) = \{ V(F) : F \subseteq P \text{ face}\}$ as a
    subposet of $2^{V(P)}$. Since every face is an intersection of facets, $f$
    extends to an injective map $f : L(P) \to L(Q)$. Therefore, we can view
    $L(P)$ as a graded sublattice of $L(Q)$. Since both polytopes have the
    same dimension, it follows that $f(F)$ is a facet of $Q$ for all facets $F
    \subset P$. Assume that there is a facet of $Q$ not contained in $L(P)$.
    The dual graph of $Q$ is connected and thus there exists a facet $G \in
    L(Q) \setminus L(P)$ and a facet $F \in L(P)$ such that $\dim F \cap G =
    \dim P - 2 = \dim Q - 2$. However, since $L(P)$ is the face lattice of a
    polytope and $F$ is a facet, there is a unique facet $F' \in L(P)$ with $F
    \cap G = F \cap F'$. Since $F'$ is also a facet of $Q$, this implies $G =
    F'$.
\end{proof}

\begin{proof}[Proof of \Cref{thm:main_asso}]
    Using \Cref{cor:PP_inv} we can assume that $P = \Simp$ and $0 < c_1 <
    \cdots < c_n$.

    Let $\arb$ be a collision pattern with bracketing $\br$ and  $1 \le r < s \le
    n$ with $[r,s] \neq [1,n]$.  We first prove that $\PPGKZ(\arb)$ is
    contained in the face $\PP(\Simp,c)^{\wt^{r,s}}$ if and only if $[r,s] \in
    \br$.  From \Cref{prop:vertices_in_face} we get that $\PPGKZ(\arb) \in
    \PP(\Simp,c)^{\wt^{r,s}}$ if and only if $\arb([r,s-1])\subseteq [r,s]$
    and $\arb([r,s-1]^c)\subseteq [r,s]^c$.  If $\arb([r,s-1])\subseteq
    [r,s]$, then there is some $k$ such that $\arb^k(r) = s$. Moreover, if
    $\arb([r,s-1]^c)\subseteq [r,s]^c$, then $\arb(r-1) \ge s$.
    \Cref{lem:bracket} now implies that $[r,s] \in \br$. Conversely, if $\arb$
    satisfies $\arb(r-1) \ge s$ then $\arb(i) \in [1,r-1] \cup [s,n]$ for all
    $1 \le i \le r-1$. Since trivially $\arb([s,n]) \subseteq [s+1,n]$, this
    shows $\arb([r,s-1]^c)\subseteq [r,s]^c$. If there is $i \in [r,s-1]$ with
    $\arb(i) > s$, then there is no $k$ such that $\arb^k(r) = s$. Hence,
    $\arb([r,s-1]) \subseteq [r,s]$, which proves the claim.

    By \Cref{thm:arb_to_cs}, we have a bijection $f$ from complete bracketings
    to collision patterns, that is, vertices of $\PP(\Simp,c)$. The facets of
    $\Ass$, that is, the maximal elements of $\Ass$ correspond to the
    bracketings $\{[r,s],[1,n]\}$ with $1 \le r < s \le n$ and $(r,s) \neq
    (1,n)$. The claim above now states $[r,s] \in \br$ if and only if
    $\PPGKZ(f(\br)) \in \PP(\Simp,c)^{\wt^{r,s}}$. Finally, $\Ass$ is the face
    lattice of a polytope of dimension $d-2 = \dim \Pi(\Simp,c)$ and
    \Cref{lem:comb_iso} completes the proof.
\end{proof}

\begin{remark}
    A simpler proof of \Cref{thm:main_asso} proceeds as follows. From the
    proof of \Cref{thm:noncross} is it not difficult to determine when
    $[\PPGKZ(\arb),\PPGKZ(\arb')]$ is an edge of $\PP(\Simp,c)$. In
    particular, one sees that the graph of $\PP(\Simp,c)$ is $(d-2)$-regular
    and hence $\PP(\Simp,c)$ is a simple polytope.  Blind and
    Mani~\cite{BlindMani} showed that two simple polytopes are combinatorially
    isomorphic if and only if they have isomorphic graphs; see
    also~\cite[Sect.~3.4]{zieglerbook}. \Cref{thm:arb_to_cs} actually gives an
    isomorphism of graphs and completes the argument. However, the pivot rule
    polytopes of products of simplices are not simple and the correspondence
    to constrainahedra will need a similar argument.
\end{remark}

In the next section, we need yet another representation of particle
collisions. For a collision pattern $\arb$, we define a partial order by
setting $i \prec j$ if particle $i$ has to be absorbed before particle $j$ can
be absorbed. Since the particle $n$ is never absorbed, this is a partial order
on $[n-1]$. For example, in the collision pattern of Figure~\ref{fig:bracket},
we note that $3$ needs to be absorbed by $4$ before $2$ can be absorbed.
However, since $2$ absorbs $1$, this has to happen before $2$ can absorbed by
$4$. Hence $1 \prec 2$ and $3 \prec 2$. There is no dependence between $1$ and
$3$ and they are not comparable with respect to $\preceq$. The complete
partial order is shown in \Cref{fig:bracket}. 

It is straightforward to see that $i$ is a minimal element with respect to
$\preceq$ if and only if $i$ is an immediate leaf.  Moreover, there is a unique element $j$ that covers $i$, that is, $i
\prec j$ and there is no $j'$ with $i \prec j' \prec j$.  Let $j$ be the
maximal $k < i$ with $\arb(k) = \arb(i)$. If such a $j$ exists, the noncrossing condition demands $j
= i-1$. By removing $i$ from the collision pattern (and relabelling), we see
that $j$ becomes an immediate leaf and that $i$ was the only obstruction.
If there is no $k < i$ with $\arb(k) = \arb(i) = i+1$, then $j = \arb(i) =
i+1$. Indeed, once all particles $j+1,\dots,\arb(j)-1$ have been absorbed by
$\arb(j)$, $i$ is the only obstruction for $j$ becoming an immediate leaf.
Observe that in the Hasse diagram of $\preceq$ every non-maximal element is
covered by a unique element and every element covers at most two elements. The
Hasse diagram is a binary search tree.

\begin{lem}\label{lem:poset}
    Let $\arb$ be a collision pattern on $n \ge 2$ particles with associated
    partial order $\preceq$. For $a, b \in [n-1]$, we have $a \preceq b$ if
    and only if $a \le b$ and $b = \arb^k(a)$, or 
        $b < a$ and $\arb(b) = \arb^{k+1}(a)$ for some $k \ge 0$.
\end{lem}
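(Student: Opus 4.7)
The plan is to read $\preceq$ as the transitive closure of two elementary relations: (R1) $a \prec \arb(a)$, encoding direct absorption, and (R2) $a \prec b$ whenever $b < a < \arb(b)$, encoding that $a$ lies between $b$ and its absorber in the initial configuration and hence must be cleared before $b$ can reach $\arb(b)$. Both directions of the lemma will then follow from this generating description together with the noncrossing hypothesis.

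For the forward direction I address each clause separately. If (I) holds with $k \ge 1$, I iterate (R1) along the chain $a \prec \arb(a) \prec \cdots \prec \arb^k(a) = b$. If (II) holds, I set $c := \arb^k(a)$; case (I) already gives $a \preceq c$, and since $b < a \le c < \arb(c) = \arb(b)$, relation (R2) applies to yield $c \prec b$, so transitivity finishes the argument.

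For the reverse direction I will induct on the length $\ell$ of an elementary chain $a = a_0 \prec \cdots \prec a_\ell = b$. An (R1) base step is immediately (I) with $k = 1$. For an (R2) base step $b < a < \arb(b)$, the key claim is that the iterated chain $a < \arb(a) < \arb^2(a) < \cdots$ stays bounded above by $\arb(b)$: whenever $b < \arb^i(a) < \arb(b)$, the noncrossing condition applied to $(b, \arb^i(a))$ forces $\arb^{i+1}(a) \le \arb(b)$. Being strictly increasing and bounded, the chain reaches $\arb(b)$ at some step $k+1$, giving $\arb(b) = \arb^{k+1}(a)$ and hence (II).

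For the inductive step I set $c := a_{\ell-1}$ and case-split on which of (I) or (II) holds for $(a, c)$ and $(c, b)$ by the induction hypothesis. The (I)+(I), (II)+(I), and (II)+(II) compositions are routine bookkeeping that stays inside (I) or (II). The hard case will be the (I)+(II) composition in the regime $a \le b < c$, where I must argue that $b$ lies on the chain $a, \arb(a), \ldots, c$. Writing $a_i := \arb^i(a)$ so $c = a_{k_1}$, suppose for contradiction that $a_j < b < a_{j+1}$ for some $0 \le j < k_1$. Since $a_{j+1} = \arb(a_j)$, the base-case argument applied to the pair $(a_j, b)$ forces $\arb(b) \le a_{j+1}$. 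But the (II) hypothesis gives $\arb(b) = \arb^{k_2+1}(c) = a_{k_1 + k_2 + 1}$, which lies strictly above $a_{j+1}$ since $k_1 + k_2 + 1 > k_1 \ge j+1$, a contradiction. Hence $b = a_j$ for some $j$, yielding (I); the complementary subcase $b < a$ reduces directly to (II) via $\arb(b) = \arb^{k_1 + k_2 + 1}(a)$.
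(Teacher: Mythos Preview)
Your argument is correct and takes a genuinely different route from the paper's. The paper inducts on $n$: it peels off an immediate leaf $c \notin \{a,b\}$ (which disturbs neither the order relation between $a$ and $b$ nor the conditions (I)/(II)) until $a$ itself is the sole immediate leaf, and then compares the conditions for $(a,b)$ with those for $(j,b)$, where $j$ is the unique cover of $a$, using the dichotomy $j = \arb(a)$ versus $\arb(j) = \arb(a)$. You instead characterize $\preceq$ as the transitive closure of the relations (R1) and (R2) and induct on chain length, closing each of the four composition cases with the noncrossing axiom alone; the (I)+(II) case in particular is dispatched by a single noncrossing application rather than by tracking a relabelled arborescence.

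One point deserves an explicit sentence. You assert that $\preceq$ equals the transitive closure of (R1) and (R2). Your physical reasoning justifies (R1), (R2) $\subseteq\ \preceq$, which is all the ``(I)/(II) $\Rightarrow a \preceq b$'' direction needs. But your other direction assumes that whenever $a \preceq b$ there \emph{is} an (R1)/(R2)-chain to induct on, and that requires the reverse containment. This follows from the paragraph immediately preceding the lemma: every covering relation of $\preceq$, determined when an element becomes an immediate leaf in the recursive peeling, is either $i \prec \arb(i)$ or $i \prec j$ with $j < i < \arb(j) = \arb(i)$, hence an instance of (R1) or (R2). With that observation added, your proof is complete.
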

\begin{proof}
    We prove the equivalence by induction on $n$. If $n = 2$, then $a=b$ and
    the equivalence is trivially true. Assume that $n > 2$ and $a \neq b$. By
    \Cref{lem:immediate}, $\arb$ has an immediate leaf.  Assume that there is
    an immediate leaf $c \not\in \{a,b\}$. As discussed before,
    $c$ is a minimum of $\preceq$ that can be removed without changing the
    order relation between $a$ and $b$. Similarly, $c$ can be removed from the
    collision pattern without interfering with the stated conditions. In this
    case, the equivalence holds by induction.

    If no such $c$ exists, then $a$ or $b$ has to be the unique immediate leaf
    of $\arb$. If $b$ is the immediate leaf then neither condition can hold.
    Thus $a$ is an immediate leaf and there is a unique $j$ that covers $a$.
    Removing $a$, we get by induction that $j \preceq b$ if and only if $j \le
    b$ and there is $k \ge 0$ with $b = \arb^k(j)$ or $j > b$ and $\arb(b) =
    \arb^k(j)$ for some $k > 0$.  By the preceding discussion, we have that $j
    = \arb(a)$ or $\arb(j) = \arb(a)$. In the former case, we have $b =
    \arb^k(j)$ if and only if $b = \arb^{k+1}(a)$ and $\arb(b) = \arb^k(j)$ if
    and only if $\arb(b) = \arb^{k+1}(a)$. In the latter case, we have $b =
    \arb^k(j)$ if and only if $b = \arb^k(a)$ and $\arb(b) = \arb^k(j)$ if and
    only if $\arb(b) = \arb^k(a)$.
\end{proof}

\section{Products of simplices and constrainahedra}
\label{sec:products}

In this section we investigate the max-slope pivot rule polytopes of the
product of two simplices. As in the case of simplices, the construction of
max-slope arborescences lends itself to an interpretation in terms of
particle collisions. 

Let $m,n \ge 1$ and consider the Cartesian product of an $(m-1)$-simplex and
an $(n-1)$-simplex. Appealing to \Cref{cor:PP_inv}, it suffices to
consider\footnote{Note that in the simplex case it would have sufficed to
require that $P$ is combinatorially isomorphic to a simplex. Here we have to
assume that the polytope is affinely isomorphic to a Cartesian product of
simplices.}
\[
    \PS \ := \ \Simp[m-1] \times \Simp[n-1] \ \subset \ \R^{m} \times \R^{n} \
    \cong \ \R^{m + n} \, .
\]
Let us write $e'_r = e_r$ for $r=1,\dots,m$ and $e''_i = e_{m+i}$ for
$i=1,\dots,n$. Then $\PS$ is a simple polytope with $m\cdot n$ vertices
$e'_r+e''_i$ for $r=1,\dots,m$ and $i = 1,\dots,n$, contained in the
codimension-$2$ subspace of all $(x',x'') \in \R^m \times \R^n$ with $\sum_r
x'_r = \sum_i x''_i = 1$.  We choose an edge-generic objective function $c =
(c',c'') \in \R^m \times \R^n$, for which we can assume that $0 < c'_1 < c'_2
< \cdots < c'_m$ and $0 < c''_1 < c''_2 < \cdots < c''_n$.  The graph of $\PS$
oriented by $c$ has nodes $\PSV := [m] \times [n]$ that we write as $ri$ for
$r \in [m]$ and $i \in [n]$ and edges of the form $ri \to rj$ for $i < j$
(vertical edges) and $ri \to si$ for $r < s$ (horizontal edges). The unique
sink is the node $mn$ and we write $\PSVo := \PSV \setminus \{ mn \}$.

Arborescences of $(\PS,c)$ can be identified with maps $\arb : \PSVo \to \PSV$
with the property that if $\arb(ri) = sj$ then $r < s$ and $i=j$ or $r = s$
and $i < j$. The number of arborescences is $\prod_{i=1}^{m-1}
\prod_{j=1}^{n-1} (i+j)$. Every arborescence $\arb$ determines an arborescence
$\arb'$ of $(\Simp[m-1],c')$ by $\arb'(r) := s$ if $\arb(rn) = sn$.
Analogously, we obtain an arborescence $\arb''$ for $(\Simp[n-1],c'')$.

Let $\wt = (\wt',\wt'') \in \R^m \times \R^n$ be generic. In order to determine
the max-slope arborescence of $\PS$ with respect to $\wt$, let $\arb^{\wt'} :
[m-1] \to [m]$ be the max-slope arborescence of $(\Simp[m-1],c')$ determined by
$\wt'$.  If $\arb^{\wt'}(r) = s$, then $s$ maximizes the slope $\tau'(r,s) =
\frac{\wt'_s - \wt'_r}{c'_s - c'_r}$ and we record the maximal slope at $r$ as
$\tau'(r) := \tau'(r,s)$. Likewise, $\arb^{\wt''}$ is the max-slope determined
by $\wt''$ and we define $\tau''$ analogously.  Evaluating
\eqref{eqn:max_slope}, we see that the max-slope $\arb^\wt$ on $(\PS,c)$ is
given by
\begin{equation}\label{eqn:prod_arb}
    \arb^\wt(ri) \ = \
    \begin{cases}
        \arb^{\wt'}(r)i & \text{if } \tau'(r) > \tau''(i) \\
        r\arb^{\wt''}(i) & \text{if } \tau'(r) < \tau''(i). \\
    \end{cases}
\end{equation}
\Cref{fig:grid_noncross} shows an example.
\begin{figure}[h]
    \begin{center}
        \includegraphics[height=5cm]{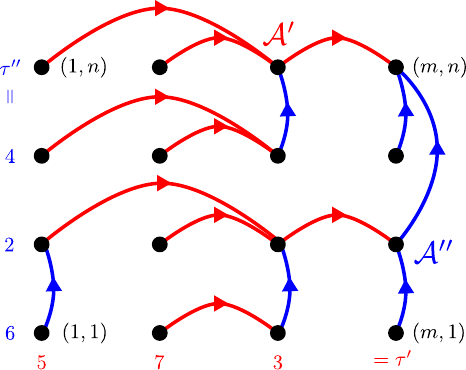}
    \end{center}
    \caption{Example of a max-slope arborescence $\arb^\wt$ for $(\PS,c)$ with
        $(m,n) = (4,4)$, $c' = (1,2,3,4)$, and $c'' = (5,6,7,8)$. The weights
        are $\wt' = (11, 14, 21, 24)$ and $\wt'' = (38, 44, 44, 48)$. The
        slopes $\tau'(r)$ and $\tau''(i)$ are indicated below.}
    \label{fig:grid_noncross}
\end{figure}
In order to describe them combinatorially, we make the following definitions.
A node $ri \in \PSV$ is an \Def{immediate leaf} of $\arb$ if it has no incoming
edges and $\arb(ri) = (r+1)i$ or $\arb(ri)=r(i+1)$. 

\begin{defn}
    For $m,n \ge 1$ an arborescence $\arb : \PSVo \to \PSV$ is \Def{reducible}
    if $m=n=1$ or
    \begin{itemize}[$\circ$]
        \item there exists $r \in [m]$ such that $ri$ is a immediate leaf
            for every $i$ and the restriction of $\arb$ to $\PSVo
            \setminus (r \times [n])$ is reducible, or
        \item there exists $i \in [n]$ such that $ri$ is a immediate leaf
            for every $r$ and the restriction of $\arb$ to $\PSVo
            \setminus ([m] \times i)$ is reducible.
    \end{itemize}
\end{defn}

\Cref{fig:grid_noncross_reduce} shows that the arborescence in
\Cref{fig:grid_noncross} is reducible.  The existence of rows or columns of
immediate leaves is the key property for many of the arguments in this section.
Note that Lemma~\ref{lem:immediate} shows that noncrossing arborescences are
reducible. In fact, the next observation shows that being \emph{noncrossing} is
a consequence of reducibility. Let $\arb : \PSVo \to \PSV$ be a reducible
arborescence and recall that $\arb$ induces arborescences $\arb'$ and $\arb''$
on $(\Simp[m-1],c')$ and $(\Simp[n-1],c'')$, respectively.  We call $\arb$
\Def{consistent} if $\arb(ri) = si$ implies $\arb'(r)=s$ and $\arb(ri) = rj$
implies $\arb''(i)=j$ for all $ri \in \PSVo$.  We call $\arb$
\Def{grid-noncrossing} if $\arb'$ and $\arb''$ are noncrossing and there are no
$1 \le r \le s < t \le m$ and $1 \le i \le j < k \le n$ with $(r,i) \neq (s,j)$
and with $\arb(si) = sk$ and $\arb(rj) = tj$.  We can illustrate an arborescence
$\arb : \PSVo \to \PSV$ as in \Cref{fig:grid_noncross} by drawing arcs
horizontally or vertically between consecutive rows or columns of nodes.
Consistency now means that every arc is a copy of the top-most horizontal arc in
the same column or right-most vertical arc in the same row. Grid-noncrossing
states that every row and column is a partial noncrossing arborescence and there
are is no crossing of a vertical and a horizontal arc.  

\begin{prop}\label{prop:consist_grid-noncross}
    If $\arb$ is reducible, then $\arb$ is consistent and grid-noncrossing.
\end{prop}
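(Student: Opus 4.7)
The plan is to induct on $m+n$, with the base case $m=n=1$ being trivial. By symmetry I may assume there is a row $r^* \in [m]$ of immediate leaves, and let $\arb^\star$ denote the restriction of $\arb$ to $\PSVo \setminus (\{r^*\} \times [n])$; this is reducible by definition, so by the inductive hypothesis it is consistent and grid-noncrossing on the reduced grid. (The edge case $r^* = m$, which can only occur for $n \le 2$, can be checked directly and is not the substance of the proof.)

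The crucial observation I will establish first is that every arc in a row of immediate leaves is of first-coordinate type. Indeed, the immediate leaf condition at $r^*(i+1)$ forbids any incoming arc to $r^*(i+1)$, so $\arb(r^*i) \neq r^*(i+1)$; combined with the possibilities $\arb(r^*i) \in \{(r^*+1)i,\, r^*(i+1)\}$ guaranteed by the immediate leaf condition at $r^*i$, this forces $\arb(r^*i) = (r^*+1)i$ for every $i$. In particular $\arb'(r^*) = r^*+1$.

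With this in hand, consistency is immediate: inside row $r^*$ every arc is first-coord to $r^*+1$, agreeing with $\arb'(r^*) = r^*+1$; outside row $r^*$, the arcs of $\arb$ coincide with those of $\arb^\star$ (no arc can land in $\{r^*\}\times[n]$ since those nodes are leaves), so consistency transfers from $\arb^\star$ after the evident relabeling of row indices. Noncrossingness of $\arb'$ follows because it is obtained from the noncrossing $(\arb^\star)'$ by inserting $r^*$ as an immediate leaf mapping to $r^*+1$, which preserves noncrossingness; noncrossingness of $\arb''$ is inherited directly from $(\arb^\star)''$.

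The bulk of the argument lies in grid-noncrossing. I will suppose for contradiction that there exist $r \le s < t$, $i \le j < k$, and $(r,i) \neq (s,j)$ with $\arb(si) = sk$ and $\arb(rj) = tj$, then case-split on whether $r^*$ appears among $r,s,t$. The case $s = r^*$ is ruled out by the key observation (it would demand a second-coord arc in row $r^*$); the case $t = r^*$ is ruled out because $\arb(rj) = r^*j$ would give $r^*j$ an incoming edge, contradicting the immediate leaf condition; and the case $r = r^*$ with $t \ne r^*$ reduces to $s = r^*$, since the key observation forces $t = r^*+1$ and hence $s = r^*$. In the remaining case $r^* \notin \{r,s,t\}$, the crossing sits entirely inside $\arb^\star$ (after relabeling), contradicting the inductive hypothesis. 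The only nontrivial obstacle I anticipate is the bookkeeping between indices in $[m]$ and in $[m]\setminus\{r^*\}$; the key observation does the substantive work.
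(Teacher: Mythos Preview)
Your proof is correct and follows the same inductive strategy as the paper, which dispatches the proposition in three sentences by saying that removing a row or column of immediate leaves and ``bringing back the row or column keeps both properties intact.'' Your proposal is essentially a careful unpacking of that sentence: the ``crucial observation'' that every arc in a row of immediate leaves must point into the adjacent row (because a vertical arc $r^*i \to r^*(i+1)$ would give $r^*(i+1)$ an incoming edge) is exactly the detail the paper leaves implicit, and it is what makes the verification of consistency and the grid-crossing case split go through. One minor remark: the edge case $r^*=m$ in fact cannot occur at all (the sink $mn$ can never be an immediate leaf), so your parenthetical about $n\le 2$ is unnecessary.
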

\begin{proof}
    We prove the claim by induction on $m+n$. If $m=n=1$, this is trivial. For
    $m+n > 0$, we can remove a row or column of immediate leafs. Both claims
    hold by induction and bringing back the row or column keeps both
    properties intact.
\end{proof}

For $m=1$ or $n=1$, reducible arborescences are precisely the noncrossing
arborescences.  \Cref{fig:non-reduce} shows \Cref{prop:consist_grid-noncross}
does not give an equivalence. It would be interesting to have a non-recursive
characterization of reducible arborescences. The combinatorial notion of
reducibility suffices to completely characterize max-slope arborescences of
products of two simplices.

\begin{figure}[h]
    \begin{center}
        \includegraphics[height=5cm]{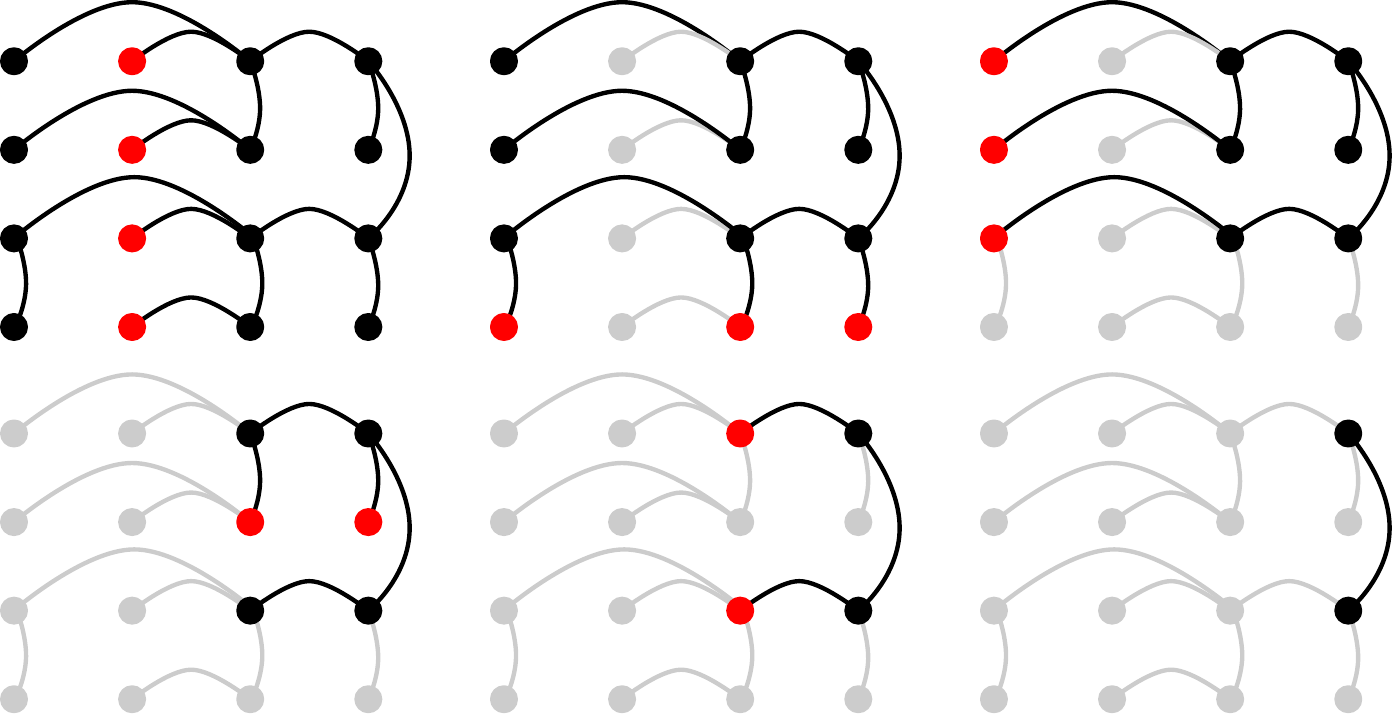}
    \end{center}
    \caption{The figure shows that the arborescence of
    \Cref{fig:grid_noncross} is reducible. The red nodes correspond to rows or
    columns of immediate leaves.}
    \label{fig:grid_noncross_reduce}
\end{figure}

\begin{figure}[h]
    \begin{center}
        \includegraphics[height=1.5cm]{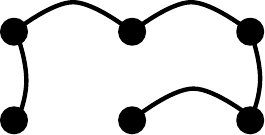}
    \end{center}
    \caption{An example of a consistent and grid-noncrossing arborescence that
    is not reducible.}
    \label{fig:non-reduce}
\end{figure}

\begin{thm}
    An arborescence $\arb : \PSVo \to \PSV$ is a max-slope arborescence of
    $(\PS,c)$ if and only if $\arb$ is reducible.
\end{thm}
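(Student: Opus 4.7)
The plan is induction on $m+n$, with trivial base $m=n=1$. Both implications follow the same peeling strategy: locate a row or column of immediate leaves, strip it off, and apply the inductive hypothesis.

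For the forward direction, assume $\arb=\arb^{(\wt',\wt'')}$ is max-slope and let $r_0\in[m]$ and $i_0\in[n]$ achieve $\tau'(r_0)=\max_r\tau'(r)$ and $\tau''(i_0)=\max_i\tau''(i)$. Genericity of $\wt$ makes these distinct; by the symmetry between factors assume $\tau'(r_0)>\tau''(i_0)$, so that $\tau'(r_0)>\tau''(i)$ for every $i$. \Cref{cor:imm_leaf} applied to $\arb^{\wt'}$ then gives $\arb^{\wt'}(r_0)=r_0+1$ and no $r$ with $\arb^{\wt'}(r)=r_0$. Feeding this into \eqref{eqn:prod_arb} yields $\arb(r_0 i)=(r_0+1)i$ for every $i$, and every $r_0 i$ has no incoming edges: a horizontal one would force some $r$ with $\arb^{\wt'}(r)=r_0$, and a vertical one from $r_0(i-1)$ would contradict $\arb(r_0(i-1))=(r_0+1)(i-1)$. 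So row $r_0$ consists of immediate leaves. Removing it leaves the slopes $\tau'(r)$ for $r\neq r_0$ and all $\tau''(i)$ unchanged, because the defining maxima were not attained at $r_0$; hence the restriction is the max-slope arborescence of $\PS[m-2,n-1]$ with the restricted weights and is reducible by induction.

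For the converse, assume $\arb$ is reducible via row $r_0$; then necessarily $r_0<m$ and $\arb(r_0 i)=(r_0+1)i$ for every $i$, since any vertical leaf in row $r_0$ would give $r_0(i+1)$ an incoming edge. The restriction $\bar\arb$ is a valid arborescence of $\PS[m-2,n-1]$ (no edges enter row $r_0$) and is reducible by definition, so the inductive hypothesis gives weights $(\bar\wt',\wt'')$ realizing it. I would then extend $\bar\wt'$ to $\wt'$ on $[m]$ by setting $\wt'_{r_0}:=-K$ for $K$ large, and claim $\arb=\arb^{(\wt',\wt'')}$ once $K$ is sufficiently large. The column case is symmetric, with $\wt''_{i_0}:=-K$.

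The main technical point, and where care is needed, is to verify that the single parameter $K$ simultaneously makes all the required inequalities hold. For $r<r_0$, $\tau'(r,r_0)=(-K-\bar\wt'_r)/(c'_{r_0}-c'_r)\to-\infty$ since $c'_{r_0}>c'_r$, so $r_0$ is never the argmax at $r$ and both $\arb^{\wt'}(r)$ and $\tau'(r)$ retain their restricted values. For $r>r_0$ the relevant formulas do not involve $\wt'_{r_0}$ at all. For $r=r_0$, $\tau'(r_0,s)=(\wt'_s+K)/(c'_s-c'_{r_0})$ is, to leading order in $K$, uniquely maximized at $s=r_0+1$, so $\arb^{\wt'}(r_0)=r_0+1$ and $\tau'(r_0)\to+\infty$, dominating every $\tau''(i)$. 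Plugging these facts into \eqref{eqn:prod_arb} shows that $\arb^{(\wt',\wt'')}$ sends row $r_0$ horizontally into row $r_0+1$ (matching $\arb$) and agrees with $\bar\arb$ on every other vertex because the comparisons $\tau'(r)\lessgtr\tau''(i)$ are preserved, completing the equality $\arb=\arb^{(\wt',\wt'')}$.
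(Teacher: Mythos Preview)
Your proof is correct and follows essentially the same strategy as the paper: induction on $m+n$ via peeling a row or column of immediate leaves, using \Cref{cor:imm_leaf} and \eqref{eqn:prod_arb} in the forward direction and choosing the new weight coordinate very negative in the converse. The only differences are cosmetic: you make explicit that a row of immediate leaves must point entirely horizontally (hence $r_0<m$), and you argue the converse via a direct $K\to\infty$ asymptotic rather than invoking the upper-bound structure from the proof of \Cref{thm:noncross}; the paper's phrase ``for $\wt'_r$ sufficiently small'' is exactly your $\wt'_{r_0}=-K$.
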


\begin{proof}
    Let $\arb = \arb^\wt$ be a max-slope arborescence for some $\wt =
    (\wt',\wt'')$.  We show by induction on $m+n$ that $\arb^\wt$ is reducible.
    The base case $m=n=1$ is trivially true. Let $r$ with $\tau'(r)$ maximal and
    $i$ with $\tau''(i)$ maximal. Without loss of generality, let us assume that
    $\tau'(r) > \tau''(i)$. From \Cref{cor:imm_leaf} we know that $r$ is an
    immediate leaf of $\arb'$ and there are no horizontal arcs into $rj$ for all
    $j$. Moreover, \eqref{eqn:prod_arb} shows that $\arb(rj) = (r+1)j$ for all
    $j$. This shows that $rj$ is an immediate leaf for all $j$. We also note
    that the restriction of $\arb^{\wt}$ to $\PSVo \setminus r \times [n]$ is
    the max-slope arborescence of $\PS[m-2,n-1]$ for $\hat{c} =
    (c'_1,\dots,c'_{r-1},c'_{r+1},\dots,c'_m, c'')$ and $\wt = (\wt'_1, \dots,
    \wt'_{r-1}, \wt'_{r+1}, \dots, \wt'_m,\wt'')$. The claim follows by
    induction.

    To prove the converse, let $\arb : \PSVo \to \PSV$ be a reducible
    arborescence. We prove the existence of $\wt = (\wt',\wt'') \in \R^{m+n}$
    by induction on $(m,n)$. If $m = 1$ or $n=1$, then $\arb$ is a noncrossing
    arborescence and the claim reduces to \Cref{thm:noncross}.  Thus, let $m >
    1$ and $n > 1$. By reducibility and without loss of generality, we may
    assume that $ri$ is an immediate leaf of $\arb$ for all $i$. The
    restriction of $\arb$ to $r \times [n]$ is a reducible arborescence for
    $(\PS[m-2,n-1],\hat{c})$ with $\hat{c}$ as above and by induction there is
    a suitable $\hat{\wt} =
    (\wt'_1,\dots,\wt'_{r-1},\wt'_{r+1},\dots,\wt'_m,\wt''_1,\dots,\wt''_n)$
    that proves that the restriction is a max-slope arborescence.  In order to
    find $\wt'_r$, we obtain from the proof of \Cref{thm:noncross}, that the
    inequalities $L_{rs}(\wt') > 0$ with $r < s$ pose only upper bounds on
    $\wt'_r$. In particular, we can choose $\wt'_r$ such that $\tau'(r) >
    \tau'(s)$ for $s \neq r$. Similarly, let $i \in [n]$ with $\tau''(i)$
    maximal. The condition $\tau'(r) > \tau''(i)$ is again an upper bound on
    $\wt'_r$. Thus for $\wt'_r$ sufficiently small, $\arb = \arb^\wt$ for $\wt
    = (\wt',\wt'')$.
\end{proof}

To view max-slope arborescences of $\PS$ as particle collisions, we use the
following generalization of particles on a line due by Bottman and
Poliakova~\cite{BottmanPoliakova}. We consider $m$ vertical lines
$L'_1,\dots,L'_m$ and $n$ horizontal lines $L''_1,\dots,L''_n$. The lines are
labelled left-to-right and bottom-to-top.  We place a particle $ri$ at the point
of intersection of $L'_r$ and $L''_i$.  The particle movements are induced by
parallel displacements of the $m+n$ lines. In this scenario parallel lines
collide and particles contained in the colliding lines merge.

We equip every line $L'_r$ and $L''_i$ with a location $-\wt'_r$ and $-\wt''_i$
at time $t=0$ and we assume that $-\wt'_1 \le -\wt'_2 \le \cdots \le -\wt'_m$
and $-\wt''_1 \le -\wt''_2 \le \cdots \le -\wt''_n$.  For $t > 0$, the lines
$L'_r$ move to the left with constant velocity $-c'_r$, the lines $L''_i$ move
down with constant velocity $-c''_i$. As before we assume that $0 < c'_1 < c'_2
< \cdots < c'_m$ and $0 < c''_1 < c''_2 < \cdots < c''_n$. If two or more lines
collide, they are absorbed by the line with the largest index and this line
continues at its original speed.  If we assume that the locations $-\wt =
(-\wt',-\wt')$ are sufficiently generic, then no more than two lines collide and
we can encode the collisions of particles by a collision pattern $\colpat^{-\wt}
: \PSVo \to \PSV$; see \Cref{fig:particle_grid} for an illustration.

\begin{figure}[h]
    \begin{center}
        \includegraphics[height=5cm]{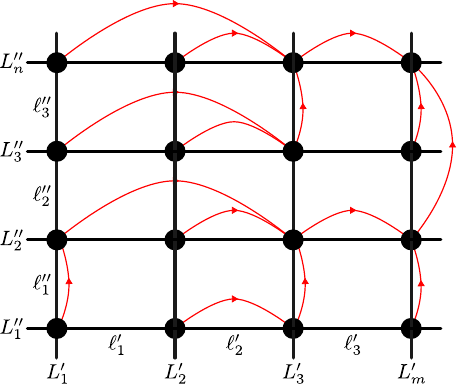}
    \end{center}
    \caption{The figure shows the $m=4$ vertical lines $L'_r$ and the $n=4$
    horizontal lines $L''_i$ together with the $16$ particles at the
    intersections and the collision pattern from \Cref{fig:grid_noncross}.}
    \label{fig:particle_grid}
\end{figure}

The same argument as for \Cref{thm:cs_max_arb} yields the following.
\begin{thm}\label{thm:cs_red_arb}
    Let $m,n \ge 2$ and $c = (c',c'') \in \R^{m+n}$ with $0 < c'_1 < c'_2 <
    \cdots < c'_m$ and $0 < c''_1 < c''_2 < \cdots < c''_n$.  For $\arb :
    \PSVo \to \PSV$ and $\wt = (\wt',\wt'') \in \R^{m+n}$, the following are
    equivalent:
    \begin{enumerate}[\rm i)]
        \item $\arb = \colpat^{-(\wt - \alpha c)}$ is a collision pattern for
            $m\cdot n$ particles contained in $m$ vertical lines and $n$
            horizontal lines moving at velocities $c=(-c',-c'')$ and starting
            at locations $-(\wt - \alpha c)$ for some $\alpha \ge 0$.
        \item $\arb = \arb^\wt$ is a max-slope arborescence of
            $(\PS,c)$ with respect to weight $\wt$.
    \end{enumerate}
\end{thm}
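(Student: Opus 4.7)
The plan is to follow the same template as \Cref{thm:cs_max_arb}, reducing the two-factor problem to two independent one-simplex analyses via equation \eqref{eqn:prod_arb}.

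First, I would observe that both conditions are invariant under the substitution $\wt \mapsto \wt - \alpha c$ for $\alpha \ge 0$. For (ii), this is \Cref{cor:PP_inv}(i) applied componentwise. For (i), replacing $\wt$ by $\wt - \alpha c$ simply shifts the starting time by $\alpha$ uniformly across all lines, which does not alter which line absorbs which or which particle absorbs which. Hence we may choose $\alpha \ge 0$ large enough that $-(\wt' - \alpha c')$ is strictly increasing in the $[m]$-coordinates and $-(\wt'' - \alpha c'')$ is strictly increasing in the $[n]$-coordinates, reducing to the generic case where no colinearities among line locations need to be worried about.

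Second, I would determine $\colpat^{-\wt}(ri)$ directly. Particle $ri$ lies at $L'_r \cap L''_i$, and it is absorbed exactly when either $L'_r$ first collides with a vertical line to its right or $L''_i$ first collides with a horizontal line above it, whichever happens sooner. By the computation in \eqref{eqn:tij} applied independently to each family of lines, the first collision time of $L'_r$ is
\[
    t'_r \ = \ \min_{s > r} \frac{-(\wt'_s - \wt'_r)}{c'_s - c'_r} \ = \ \frac{1}{\tau'(r)},
\]
and at that moment $L'_r$ is absorbed into $L'_{\arb^{\wt'}(r)}$; symmetrically, the first collision time of $L''_i$ is $1/\tau''(i)$ and absorbs $L''_i$ into $L''_{\arb^{\wt''}(i)}$. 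Since $\tau'(r) > \tau''(i)$ is equivalent to $t'_r < t''_i$, we conclude
\[
    \colpat^{-\wt}(ri) \ = \ \begin{cases} \arb^{\wt'}(r)\,i & \text{if } \tau'(r) > \tau''(i), \\ r\,\arb^{\wt''}(i) & \text{if } \tau''(i) > \tau'(r), \end{cases}
\]
which matches \eqref{eqn:prod_arb} exactly. Thus $\colpat^{-\wt} = \arb^\wt$, and both directions of the equivalence follow.

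The only mild subtlety, and the main thing to be careful about, is confirming that the first absorption event at each particle really does determine $\colpat^{-\wt}(ri)$ for every particle simultaneously, rather than needing to track the cascade of later collisions. This is immediate from the definition of a collision pattern: each particle is assigned the label of the particle into which it first merges, and merging $L'_r$ into $L'_{\arb^{\wt'}(r)}$ carries all particles $ri$ on $L'_r$ into the correspondingly labelled particles in one stroke. So no induction on time or on $m+n$ is strictly required; the formula above, applied pointwise, already establishes the bijection between collision patterns and max-slope arborescences.
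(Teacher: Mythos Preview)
Your approach is exactly what the paper has in mind: it states only that ``the same argument as for \Cref{thm:cs_max_arb} yields'' the result, and your reduction via \eqref{eqn:prod_arb} together with the $\alpha$-shift to make $-\wt$ coordinatewise increasing is precisely that argument. One slip to fix: the first collision time of $L'_r$ is
\[
    t'_r \ = \ \min_{s>r}\frac{-(\wt'_s-\wt'_r)}{c'_s-c'_r} \ = \ -\tau'(r),
\]
not $1/\tau'(r)$; with the corrected formula the equivalence $\tau'(r) > \tau''(i) \iff t'_r < t''_i$ is immediate, and the rest of your argument goes through unchanged.
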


The combinatorics of collisions of $m\cdot n$ particles sitting on $m$
vertical and $n$ horizontal lines was modelled in~\cite{BottmanPoliakova} by
certain preorders on $\L = \{L'_1, \dots, L'_{m-1}, L''_1, \dots,
L''_{n-1}\}$. Recall that a \Def{preorder} on $\L$ is a reflexive and
transitive relation $\preceq$. For $x \in \L$, the equivalence class $[x]$ is
the collection of elements $y \in \L$ with $x \preceq y$ and $y \preceq x$. On
the collection of equivalence classes $\preceq$ yields a partial order.
Bottman and Poliakova define a preorder $\preceq$ on $\L$ to be a \Def{good
rectangular} preorder~\cite[Def.~2.1]{BottmanPoliakova} if 
\begin{itemize}[$\circ$]
    \item $L'_r$ and $L''_i$ are comparable for all $r,i$ 
        \hfill(orthogonal comparability)
\end{itemize}
and $L'_o \preceq L'_q$ (respectively $L''_o \preceq L''_q$) if and only if
\begin{itemize}[$\circ$]
    \item $L'_o \preceq L''_p \preceq L'_q$ (respectively $L''_o \preceq L'_p
        \preceq L''_q$) for some $p$, or \hfill (orthogonal link)
    \item there is no $\min(o,q) < p < \max(o,q)$ with $L'_o \prec L'_p \succ
        L'_q$ (respectively $L''_o \prec L''_p \succ L''_q$).  \hfill (no
        gaps)
\end{itemize}

From a collision pattern $\arb$, we construct a partial order $\preceq$ on $\L$
that captures which lines have to be absorbed before other lines can be
absorbed. The lines $L'_m$ and $L''_n$ are never absorbed. Let $\preceq'$ and
$\preceq''$ be the partial orders on $L'_1,\dots,L'_{m-1}$ and
$L''_1,\dots,L''_{n-1}$ obtained from $\arb'$ and $\arb''$, respectively;
cf.~\Cref{lem:poset} and the discussion preceding it. We define the binary
relation $\preceq$ by
\begin{enumerate}[(P1)]
    \item $L'_r \preceq L''_i$ if $\arb(ri) = si$ and $L''_i \preceq L'_r$ if $\arb(ri) = rj$;
    \item $L'_r \preceq L'_s$ if $L'_r \preceq' L'_s$ or $L'_r \preceq
        L''_i \preceq L'_s$  for some $i$;
    \item $L''_i \preceq L''_j$ if $L''_i \preceq'' L''_j$ or $L''_i
        \preceq L'_r \preceq L''_j$ for some $r$.
\end{enumerate}

\begin{prop}\label{prop:grid_poset}
    The binary relation $\preceq$ is a good rectangular partial order on $\L$.
\end{prop}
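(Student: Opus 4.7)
The plan is to verify the defining properties of a good rectangular preorder on $\L$: reflexivity, transitivity, and antisymmetry of $\preceq$ (so it is a partial order), orthogonal comparability, and the orthogonal link / no gaps characterization for same-type pairs. I interpret (P1)--(P3) as defining $\preceq$ to be the smallest relation closed under these implications. Reflexivity is then inherited from $\preceq'$ and $\preceq''$ (partial orders by \Cref{lem:poset}), and transitivity is built in by the closure. For antisymmetry, a nontrivial purely horizontal cycle would reduce to a cycle in $\preceq'$, forbidden by \Cref{lem:poset}; a mixed cycle passing through (P1) would correspond to a crossing between horizontal and vertical arcs of $\arb$, forbidden by grid-noncrossing (\Cref{prop:consist_grid-noncross}).

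Orthogonal comparability follows immediately from (P1): the arc of $\arb$ leaving $ri$ is either of the form $ri \to si$ or $ri \to rj$, so (P1) puts $L'_r$ and $L''_i$ in a direct comparison. For the orthogonal link / no gaps characterization I consider horizontal pairs $L'_o, L'_q$; the vertical case is symmetric. In the forward direction, if $L'_o \preceq L'_q$ was certified by (P2) through some $L''_p$, orthogonal link holds; otherwise $L'_o \preceq' L'_q$, and the binary-search-tree structure of $\preceq'$ from the discussion preceding \Cref{lem:poset} shows that the lowest common ancestor of two $\preceq'$-comparable nodes is one of the endpoints, ruling out any peak $L'_p$ with $\min(o,q) < p < \max(o,q)$ and $L'_o \prec L'_p \succ L'_q$. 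Hence no-gaps holds. The backward direction is essentially the converse: orthogonal link yields $L'_o \preceq L'_q$ via (P2), while no-gaps translates into a peak-free interval, which by the same BST argument forces the lowest common ancestor of $L'_o$ and $L'_q$ to be an endpoint, so they are $\preceq'$-comparable, and (P2) places them in $\preceq$.

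The main obstacle will be the no-gaps direction of this characterization, which hinges on the explicit iterated-$\arb'$ description of $\preceq'$ in \Cref{lem:poset} and the derived binary-search-tree structure, together with careful bookkeeping of the direction of the resulting comparability. The remaining parts reduce to direct applications of (P1)--(P3) combined with the reducibility/grid-noncrossing consequences of \Cref{prop:consist_grid-noncross}.
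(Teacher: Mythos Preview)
Your proposal has a real gap at transitivity. You assert it is ``built in by the closure,'' but the rules (P1)--(P3) do not form a closure system that forces transitivity: rule (P1) is static and takes no $\preceq$-inputs, so the only cross-type comparisons ever produced are those read off directly from $\arb$. Thus a chain such as $L'_r \preceq L''_i$ (from (P1), because $\arb(ri)$ is horizontal) followed by $L''_i \preceq'' L''_j$ (hence $L''_i \preceq L''_j$ via (P3)) does not automatically yield $L'_r \preceq L''_j$; none of the three rules gives that cross-type relation unless you separately show $\arb(rj)$ is horizontal. The paper avoids this by induction on reducibility: it peels off a row or column of immediate leaves, identifies the corresponding line as a $\preceq$-minimum, and then only has to check transitivity for chains beginning at that minimum, which is immediate.

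Two smaller issues compound this. First, your antisymmetry sketch (``a mixed cycle would correspond to a crossing \ldots\ forbidden by grid-noncrossing'') is not substantiated: the grid-noncrossing condition of \Cref{prop:consist_grid-noncross} forbids one very specific configuration of a horizontal and a vertical arc, and it is not clear that an arbitrary $\preceq$-cycle manufactures that configuration. Second, in the no-gaps argument you tacitly identify $\preceq$ on horizontal lines with $\preceq'$, but a peak $L'_o \prec L'_p \succ L'_q$ is a statement about $\preceq$, and either relation could arise from an orthogonal link rather than from $\preceq'$; the binary-search-tree structure of $\preceq'$ alone does not exclude such peaks. The paper's inductive peeling argument handles all three points uniformly, and that is the missing ingredient in your approach.
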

\begin{proof}
    We first prove that $\preceq$ is a partial order.  By reducibility, let us
    assume that $ri$ is an immediate leaf for all $i$.  This implies that
    $L'_r \prec L''_i$ for all $i$. Moreover, $r$ is an immediate leaf of
    $\arb'$ and hence $L'_r$ is a minimum with respect to $\preceq'$. This
    implies that if $x \preceq L'_r$ then $x = L'_r$.  By induction, this
    shows that $\preceq$ restricted to $\L \setminus \{L'_r\}$ is a partial
    order and that $\preceq$ is reflexive and anti-symmetric on $\L$. We are
    left to show that if $L'_r \prec x$ and $x \prec y$, then $L'_r \prec y$.
    If there is some $L''_i$ with $x \preceq L''_i \preceq y$, then we are
    done. Hence $x = L'_s$ and $y = L'_t$ for some $s,t$ and $L'_s \prec'
    L'_t$. Since $\preceq'$ is transitive, we get $L'_r \preceq' L'_t$ and
    hence $L'_r \preceq L'_t$.

    To show that $\preceq$ is good rectangular, we only need to show that the
    \emph{no gaps} condition characterizes $\preceq'$ (and hence $\preceq''$).
    Assume that $1 \le r < s < t \le m-1$. From \Cref{lem:poset} we get that
    $L'_r \prec L'_s$ if and only if $s = (\arb')^k(r)$ for some $k > 0$ and
    $L'_t \prec L'_s$ if and only if $\arb'(s) = (\arb')^l(t)$ for some $l >
    0$. There is no $h$ with $t = (\arb')^h(r)$ and since $\arb(r) \le s$,
    there is no $h$ with $(\arb')^h(t) = \arb(r)$. Thus $L'_r$ and $L'_t$ are
    incomparable. The converse argument is analogous.
\end{proof}

\Cref{prop:grid_poset} gives a way to build up the partial order by
successively removing rows or columns of immediate leaves.
\Cref{fig:grid_poset} gives an example. Note that the Hasse diagram is not a
tree anymore. Removing a row/column of immediate leaves might open up
rows/columns of immediate leaves.

\begin{figure}[h]
    \begin{center}
        \includegraphics[height=4cm]{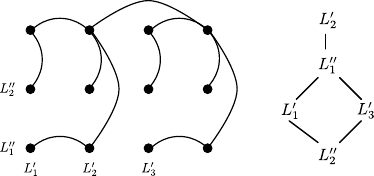}
    \end{center}
    \caption{A collision pattern and the corresponding poset.}
    \label{fig:grid_poset}
\end{figure}

\Cref{prop:grid_poset} shows that the partial order associated to a collision
pattern is a good rectangular poset. In fact, every good rectangular poset arises
that way.

\begin{prop}\label{prop:coll_rec}
    Collision patterns for $m$ vertical and $n$ horizontal lines are in
    bijection with good rectangular posets.
\end{prop}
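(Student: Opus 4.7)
The plan is to construct an inverse $\Psi$ to the map $\Phi : \arb \mapsto {\preceq}$ of \Cref{prop:grid_poset} by induction on $m+n$. The base case $m=n=1$ is immediate. For the inductive step, I would identify a minimal element of $\preceq$, install a corresponding row or column of immediate leaves in $\arb = \Psi(\preceq)$, and recurse on the restriction of $\preceq$ to the remaining lines.

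The crucial combinatorial observation is that the minimal elements of a good rectangular partial order come in a single type (all $L'$ or all $L''$): if $L'_r$ and $L''_i$ were both minimal and distinct, orthogonal comparability would force them to be comparable, contradicting the minimality of one of them. Without loss of generality take $L'_r$ a minimum of $\preceq$; by symmetry the case of a minimal $L''_i$ is handled analogously. In this situation $\Psi$ sets $\arb(ri) := (r+1)i$ for every $i \in [n]$, so that row $r$ consists of immediate leaves.

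Next, I would delete $L'_r$ from $\L$, relabel the remaining $L'$-lines, and check that the restricted relation on $\L \setminus \{L'_r\}$ is again a good rectangular partial order: orthogonal comparability is inherited; any orthogonal-link passing through $L'_r$ can be bypassed using (P2)/(P3) combined with transitivity since $L'_r$ lies at the bottom; and the no-gaps condition is preserved because a minimum can never serve as the obstructing intermediate $L'_p$. The inductive hypothesis then provides a unique collision pattern on $\PS[m-2,n-1]$, which combines with the installed row of immediate leaves into the desired collision pattern on $\PS$.

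The main obstacle will be verifying well-definedness of $\Psi$ and the two compositional identities. If several $L'$-lines are simultaneously minimal, all corresponding rows become immediate leaves and can be peeled in any order, so $\Psi$ is independent of the choice of minimum. For $\Psi \circ \Phi = \mathrm{id}$, reducibility of a collision pattern (via \Cref{thm:cs_red_arb} and the preceding theorem) yields a row or column of immediate leaves, and the same reasoning as in the proof of \Cref{prop:grid_poset} shows that the corresponding line is a minimum of $\Phi(\arb)$; induction then closes the loop. For $\Phi \circ \Psi = \mathrm{id}$, the comparisons between $L'_r$ and the remaining lines in $\Phi(\Psi(\preceq))$ are exactly those forced by (P1) and transitivity from the installed arcs $\arb(ri) = (r+1)i$, reproducing the minimum data we started with, while the remaining comparisons agree by induction.
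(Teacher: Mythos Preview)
Your proposal is correct and follows essentially the same approach as the paper, whose entire proof reads ``This follows by reducibility and induction on $m+n$.'' Your plan---peel off a minimal line of $\preceq$, install the corresponding row/column of immediate leaves, and recurse---is precisely the content of that sentence, spelled out in the detail the paper omits.
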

\begin{proof}
    This follows by reducibility and induction on $m+n$.
\end{proof}

A good rectangular preorder $\preceq_2$ is \Def{refined} by $\preceq_1$ if $x
\preceq_1 y$ implies $x \preceq_2 y$ for all $x,y \in \L$. 

\begin{thm}[{\cite[Theorem~4.1]{BottmanPoliakova}}]\label{thm:def_constrainahedron}
    The good rectangular preorders on $\L = \{L'_1, \dots,
    L'_{m-1}, L''_1, \dots, L''_{n-1}\}$ partially ordered by refinement is
    the face poset of an $(m+n-3)$-dimensional polytope and is called the
    \Def{$(m,n)$-constrainahedron} $C(m,n)$.
\end{thm}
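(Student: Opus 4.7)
The plan is to exhibit a convex polytope realizing the refinement poset of good rectangular preorders as its face lattice. In the context of this paper, the natural candidate is the max-slope pivot polytope $\PP(\PS, c)$, which by \Cref{prop:PP_aff} has the correct dimension $m+n-3$. By \Cref{thm:cs_red_arb} its vertices are in bijection with collision patterns, and by \Cref{prop:coll_rec} these are in bijection with good rectangular \emph{posets}, which are precisely the minimal (finest) elements of the refinement poset. So the vertex level is handled, and the work is to identify all higher faces with the remaining good rectangular preorders.

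For each good rectangular preorder $\preceq$ with equivalence classes $C_1 \prec C_2 \prec \cdots \prec C_k$, I would construct a weight vector $w_\preceq = (w'_\preceq, w''_\preceq)$ encoding $\preceq$ in the particle picture of \Cref{sec:products}. Concretely, pick distinct collision times $t_1 < t_2 < \cdots < t_k$ and set $-w'_r$ and $-w''_i$ so that the vertical line $L'_r$ (respectively the horizontal line $L''_i$) reaches the eventual common intersection point of its equivalence class exactly at time $t_j$, where $C_j$ is the class containing it. A computation analogous to \Cref{prop:av_lifespan} and \Cref{prop:rs_loc} would express $-h_{\PP(\PS,c)}(w_\preceq)$ as a fixed quantity depending only on the class sizes and $k$, and would identify $\PP(\PS,c)^{w_\preceq}$ as the convex hull of those $\PPGKZ(\arb)$ where the good rectangular poset associated to $\arb$ refines $\preceq$. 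The key input here is that infinitesimal generic perturbations of $w_\preceq$ break each class $C_j$ into a totally ordered sequence of single-line collisions in every possible way compatible with the good rectangular axioms.

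The main obstacle is showing that the assignment $\preceq \mapsto \PP(\PS,c)^{w_\preceq}$ is inclusion-reversing, injective, and surjective onto the face lattice, so that every face of $\PP(\PS,c)$ is accounted for. Unlike the simplex case, $\PP(\PS,c)$ is not simple in general (as remarked after \Cref{thm:main_asso}), so the Blind--Mani shortcut and \Cref{lem:comb_iso} do not directly apply. I would handle this by induction on the rank of the refinement poset: given a covering relation $\preceq_1 \lessdot \preceq_0$ corresponding to merging two consecutive classes of $\preceq_0$, argue directly from the particle dynamics that $\PP(\PS,c)^{w_{\preceq_1}}$ sits as a facet of $\PP(\PS,c)^{w_{\preceq_0}}$, and that every facet of $\PP(\PS,c)^{w_{\preceq_0}}$ arises this way. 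The base case $\preceq_0$ the trivial preorder (a single class) gives the whole polytope, and the induction down to the finest preorders gives the vertices. Combined with a matching count of good rectangular preorders versus faces --- or with a direct comparison to the generalized-permutahedron realization of Bottman--Poliakova showing the $f$-vectors agree --- this establishes the combinatorial isomorphism, proving $C(m,n)$ is realized as an $(m+n-3)$-dimensional polytope.
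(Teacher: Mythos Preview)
This theorem is not proved in the paper at all: it is quoted from \cite{BottmanPoliakova} and then \emph{used} as an input to the proof of \Cref{thm:constrainahedron}, where \Cref{lem:comb_iso} is applied with $P$ a polytopal realization of $C(m,n)$ (supplied by the cited result) and $Q = \PP(\PS,c)$. Your proposal is therefore not a reconstruction of the paper's argument but something strictly more ambitious: an independent proof of polytopality via $\PP(\PS,c)$ that would establish \Cref{thm:def_constrainahedron} and \Cref{thm:constrainahedron} simultaneously, without appeal to \cite{BottmanPoliakova}.

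That programme is coherent, but the sketch has real gaps. First, your stated reason for discarding \Cref{lem:comb_iso} is wrong: that lemma does not assume simplicity, and indeed the paper applies it precisely in the non-simple product-of-simplices case. The genuine obstruction is that \Cref{lem:comb_iso} compares the face lattices of two \emph{polytopes}, whereas you have only one polytope and an abstract poset you wish to identify with its face lattice; this is a different problem. Second, the heart of the matter is your surjectivity step, ``every facet of $\PP(\PS,c)^{w_{\preceq_0}}$ arises this way'', and the inductive outline does not supply it: one must show that the normal cone of each face is covered by the weight vectors $w_\preceq$ you construct, which requires an explicit analysis of how perturbations of $w_{\preceq_0}$ break ties, not just that some perturbations do. Third, your proposed fallback of matching $f$-vectors against the Bottman--Poliakova realization is circular, since the existence of that realization is exactly the statement you are trying to prove.
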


In the rest of this section we will prove the following.

\begin{thm}\label{thm:constrainahedron}
    For $m,n \ge 2$ and a generic $c \in \R^{m+n}$, the max-slope pivot
    polytope $\PP(\PS,c)$ is combinatorially isomorphic to the
    $(m,n)$-constrainahedron.
\end{thm}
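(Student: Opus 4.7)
The plan is to follow the template of the proof of Theorem~\ref{thm:main_asso}: establish a bijection on vertices, exhibit a weight vector selecting each facet of $C(m,n)$, verify that the induced vertex--facet incidences agree, and apply Lemma~\ref{lem:comb_iso}. The vertex bijection is essentially free: Theorem~\ref{thm:cs_red_arb} identifies the vertices of $\PP(\PS,c)$ (the max-slope arborescences) with collision patterns for $m\cdot n$ particles on $m$ vertical and $n$ horizontal lines, and Proposition~\ref{prop:coll_rec} identifies the latter with good rectangular partial orders on $\L$, i.e., with the vertices of $C(m,n)$.

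Next I would describe the facets of $C(m,n)$ explicitly as the two-class good rectangular preorders $A\prec B$ on $\L$. A direct check of the orthogonal-link and no-gaps axioms forces $A':=A\cap\{L'_1,\dots,L'_{m-1}\}$ and $A'':=A\cap\{L''_1,\dots,L''_{n-1}\}$ to be (possibly empty) intervals $[r_1,r_2]$ and $[i_1,i_2]$, with $\varnothing\neq A\neq\L$. For each such preorder $F$ I would construct a weight vector $\wt^F\in\R^{m+n}$ generalizing the vector $\wt^{r,s}$ of Proposition~\ref{prop:rs_loc}: after a global shift via Corollary~\ref{cor:PP_inv}, the initial line locations $-\wt^F$ are set so that $L'_{r_1},\dots,L'_{r_2+1}$ coincide and simultaneously collide at time $t_1=0$, similarly $L''_{i_1},\dots,L''_{i_2+1}$ coincide at $t_1=0$, while all remaining vertical and horizontal lines are positioned to collide at $t_2=1$. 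Using the coordinate-wise formula~\eqref{eqn:prod_arb} to split the support function along the two factors, the analog of Proposition~\ref{prop:av_lifespan} provides a closed expression for $-h_{\PP(\PS,c)}(\wt^F)$ as a sum of lifespans, and the exposed face $\PP(\PS,c)^{\wt^F}$ has the expected codimension one.

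The main technical step is the analog of Proposition~\ref{prop:vertices_in_face}: I would prove that $\PPGKZ(\arb)$ lies in $\PP(\PS,c)^{\wt^F}$ if and only if every line in $A$ is absorbed before any line in $B$ in the collision pattern $\arb$; equivalently, the good rectangular partial order $\preceq_\arb$ attached to $\arb$ by Proposition~\ref{prop:grid_poset} refines $\preceq_F$. Reducibility of max-slope arborescences together with~\eqref{eqn:prod_arb} should make this argument nearly local: it reduces to matching the noncrossing conditions of \Cref{thm:noncross} on the sub-arborescences $\arb'$ and $\arb''$ separately, together with the compatibility dictated by which rows or columns of immediate leaves are peeled off first. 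Once this is established, Theorem~\ref{thm:def_constrainahedron} identifies the vertex set of $\PP(\PS,c)^{\wt^F}$ with that of the facet of $C(m,n)$ indexed by $F$, Proposition~\ref{prop:PP_aff} gives $\dim\PP(\PS,c)=m+n-3=\dim C(m,n)$, and Lemma~\ref{lem:comb_iso} delivers the combinatorial isomorphism.

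The hard part will be this combinatorial matching together with the verification that no extra facets arise. Because $\PP(\PS,c)$ is no longer simple, as noted in the remark after Theorem~\ref{thm:main_asso}, the Blind--Mani shortcut is unavailable; each candidate $\wt^F$ must be shown to expose a face of codimension exactly one, and no additional facets may appear beyond those indexed by two-class preorders. Matching the orthogonal-link and no-gaps axioms against the geometric conditions for simultaneous vertical and horizontal line collisions, and ensuring that the refinement order on good rectangular preorders corresponds exactly to the face-containment order on $\PP(\PS,c)$, is the subtlest part of the argument.
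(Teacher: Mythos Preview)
Your overall strategy matches the paper's: the vertex bijection via Theorem~\ref{thm:cs_red_arb} and Proposition~\ref{prop:coll_rec}, a weight for each facet of $C(m,n)$, a vertex--facet incidence check, and Lemma~\ref{lem:comb_iso}. The paper also carries out the incidence check by induction on peeling off rows or columns of immediate leaves, which is exactly the ``reducibility'' mechanism you allude to.

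However, your description of the facets of $C(m,n)$ is wrong, and this is a genuine gap. It is \emph{not} true that for a two-class good rectangular preorder $A\prec B$ the sets $A'=A\cap\{L'_1,\dots,L'_{m-1}\}$ and $A''=A\cap\{L''_1,\dots,L''_{n-1}\}$ must be single intervals. The paper lists three types of elementary collisions corresponding to facets: in type (VC) one has $A'=[r,t]$ and $A''=\varnothing$; in type (HC) one has $A'=\varnothing$ and $A''=[i,k]$; but in type (MC) the sets $A'=\bigcup\I'$ and $A''=\bigcup\I''$ are allowed to be unions of \emph{discontinuous} (pairwise disjoint, pairwise non-adjacent) families $\I',\I''$ of intervals. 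The point is that once both $A'$ and $A''$ are nonempty, the orthogonal-link clause is always available for any two parallel lines in $A$, so the no-gaps clause imposes no interval constraint on $A'$ or $A''$. Concretely, for $m=4$, $n=2$ the choice $\I'=\{[1,1],[3,3]\}$, $\I''=\{[1,1]\}$ gives a facet with $A'=\{1,3\}$, which your list misses. With an incomplete list of facets the hypothesis of Lemma~\ref{lem:comb_iso} cannot be verified, since you would fail to exhibit a face of $\PP(\PS,c)$ matching every facet of $C(m,n)$.

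Once the correct facet list is in hand, the paper's argument does essentially what you outline: it writes down explicit locations $-\wVC_{r,t}$, $-\wHC_{i,k}$, $-\wMC_{\I',\I''}$ realizing each elementary collision, and then proves the incidence ``$\preceq_\arb$ refines $\sqsubseteq_F$ iff $\PPGKZ(\arb)\in\PP(\PS,c)^{-\wt^F}$'' by induction on $m+n$ via reducibility, removing a row or column of immediate leaves and reducing to $\PS[m-2,n-1]$ or $\PS[m-1,n-2]$. Your proposed direct lifespan computation in the style of Proposition~\ref{prop:vertices_in_face} can be made to work too, but you would need to treat the (MC) locations, where both factors are perturbed simultaneously and the minimum-lifespan condition on $\arb$ at a node $sj$ genuinely mixes the horizontal and vertical slopes; this is precisely where the ``single interval'' picture breaks down.
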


Let us call a simultaneous collision of horizontal and vertical lines an
\Def{elementary collision} if it is not the result of at least two
simultaneous and disjoint collisions of sets of lines. These elementary
collisions come in three types for which we give locations for fixed
velocities $c = (c',c'') \in \R^{m+n}$:
\newcommand\wVC{w^\mathrm{VC}}%
\newcommand\wHC{w^\mathrm{HC}}%
\newcommand\wMC{w^\mathrm{MC}}%
\begin{enumerate}
    \item[(VC)] For $1 \le r \le t \le m-1$, the vertical lines $M = \{ L'_s :
        r \le s \le t \}$ are simultaneously absorbed followed by the
        simultaneous collision of all vertical and horizontal lines $\L
        \setminus M$.  Note that the lines in $M$ are absorbed by $L'_{t+1}$.
        The corresponding good rectangular preorder has equivalence classes
        $M$ and $\L \setminus M$.

        We define $-\wVC_{r,t} := (\wt',c'')$ where $\wt'_s := c'_{t+1}$ if $s
        \in [r,t]$ and $\wt'_s := c'_s$ otherwise. The total lifespan is
        $-h_{\PP(\PS,c)}(\wVC_{r,s}) = (n-1)(m-1 - (t-r))$.

    \item[(HC)] For $1 \le i \le k \le n-1$, the horizontal lines $M = \{
            L''_j : i \le j \le k \}$ are simultaneously absorbed (by
        $L''_{k+1}$) followed by the simultaneous collision of all vertical
        and horizontal lines $\L \setminus M$. The corresponding good
        rectangular preorder has equivalence classes $M$ and $\L \setminus M$.

        We define $-\wHC_{i,k} := (c',\wt'')$ where $\wt''_j := c''_{k+1}$ if $j
        \in [i,k]$ and $\wt''_j := c''_j$ otherwise.  The total lifespan is
        $-h_{\PP(\PS,c)}(\wHC_{i,k}) = (m-1)(n-1 - (k-i))$.
\end{enumerate}
\newcommand\I{\mathcal{I}}%
For the last type of elementary collisions, let us call a non-empty collection
of intervals $\I = \{ I_1,\dots,I_g \}$ of $[k]$ \Def{discontinuous} if they
are pairwise disjoint and $I_i \cup I_j$ is not an interval for $i \neq j$. We
write $\bigcup \I = I_1 \cup \cdots \cup I_g$.

\begin{enumerate}
    \item[(MC)] Let $\I'$ and $\I''$ be discontinuous
        collections of $[m-1]$ and $[n-1]$, respectively, such that
        $\I' \neq \{ [m-1] \}$ or $\I'' \neq \{ [n-1] \}$.  The
        collection of lines $M = \{ L'_r, L''_i : r \in \bigcup\I', i
        \in \bigcup\I'' \}$ are simultaneously absorbed followed by the
        simultaneous collision of all lines $\L \setminus M$.

        We define $-\wMC_{\I',\I''} := (\wt',\wt'')$ where $\wt'_s := c'_{t+1}$
        if $s \in [r,t] \in \I'$ and $\wt'_s := c'_{s}$ otherwise and $\wt''_j
        := c''_{k+1}$ if $j \in [i,k] \in \I''$ and $\wt''_j := c'_{j}$
        otherwise. The total lifespan is $-h_{\PP(\PS,c)}(\wHC_{i,k}) =
        (m-1)(n-1) - |\bigcup \I'| \cdot |\bigcup \I''|$.
\end{enumerate}

\newcommand\CRPO{\sqsubseteq}
\begin{proof}[Proof of \Cref{thm:constrainahedron}]
    The proof follows the same strategy as the proof of \Cref{thm:noncross}.
    By \Cref{prop:coll_rec}, there is a bijection between good rectangular
    orders $\preceq$ in $C(m,n)$ and the vertices of $\PP(\PS,c)$, that is,
    reducible arborescences $\arb : \PSVo \to \PSV$. Let $\CRPO$ be a coarsest
    nontrivial good rectangular preorder in $C(m,n)$. We show that there is a
    face $F \subset \PP(\PS,c)$ such that $\preceq$ refines $\CRPO$ if and
    only if $\PPGKZ(\arb) \in F$.  Since $\PP(\PS,c)$ and $C(m,n)$ are
    polytopes of dimension $m+n-3$, \Cref{lem:comb_iso} yields the claim.  For
    $m =1$ or $n=1$, this is precisely the proof of \Cref{thm:noncross}.

    Following~\cite{BottmanPoliakova}, the coarsest good rectangular preorders
    are those associated to elementary collisions of types (VC), (HC), and
    (MC). Let us assume that $\CRPO$ represents an elementary collision of
    type (VC) with $M = \{ L'_s : r \le s \le t \}$ for some $1 \le r \le t
    \le m-1$.  The two other cases are treated analogously.  Let $-\wt =
    -\wVC_{r,s}$ be the locations realizing the elementary collision and let
    $F = \PP(\PS,c)^{-\wt}$. For these locations, we have $\tau''(j) = -1$ for
    all $j \in [n]$ and $\tau'(q)  = 0$ if $q \in [r,t]$ and $\tau'(q)<0$
    otherwise.  In particular, $\PPGKZ(\arb)$ is contained in $F$ if and only
    if $\arb$ has a total lifespan $\inner{-\wt,\PPGKZ(\arb)}$ of
    $(n-1)(m-1-(t-r))$.

    We will use the fact that if $x \in \L$ is minimal with respect to
    $\preceq$, then $\preceq$ refines $\CRPO$ if and only if $x \in M$ and
    $\preceq$ refines $\sqsubseteq$ when restricted to $\L \setminus x$. 

    Assume that $\preceq$ refines $\CRPO$.  If $x$ is minimal with respect to
    $\preceq$, then $x$ corresponds to a row or column of immediate leaves of
    $\arb$. Now $x \in M$ implies $x = L'_s$ for some $s \in [r,t]$ and thus
    $\arb(si) = (s+1)i$ for all $i$. In particular, $\tau'(s) = 0$ and hence
    this row of immediate leaves contributes $0$ to the total live span
    $\inner{-\wt,\PPGKZ(\arb)}$.  That is $\inner{-\wt,e'_{s+1}-e'_{s}} = 0$
    and comparing with \eqref{eqn:GKZ}, we see that deleting the $s$-th
    coordinate of $\PPGKZ(\arb) - (n-1) \frac{e'_{s+1}-e'_s}{c'_{s+1}-c'_s}$
    is precisely $\PPGKZ(\overline{\arb})$, where $\overline{\arb}$ is the
    restriction of $\arb$ to $\PSVo \setminus s \times [n]$.  If we now set
    $\overline{\wt} = (\wt'_1,\dots,\wt'_{s-1},\wt'_{s+1},\dots,\wt'_m,\wt'')
    \in \R^{m-1+n}$, then $\PPGKZ(\arb) \in F = \PP(\PS,c)^{-\wt}$ if and only
    if $\PPGKZ(\overline{\arb}) \in \PP(\PS[m-2,n-1],c)^{-\overline{\wt}}$ and
    the proof follows by induction.

    \newcommand\twt{\widetilde{\wt}}%
    For the converse, let $\PPGKZ(\arb)$ be a vertex of $F =
    \PP(\PS,c)^{-\wt}$.  In particular, there is $\twt \in \R^{m+n}$ such that
    $\arb$ is the max-slope arborescence with respect to $\twt$. For every
    $\eps > 0$, $\arb = \arb^{\wt + \eps \twt}$. For $\eps$ sufficiently
    small, we gather from \eqref{eqn:prod_arb} that a minimum $x$ of $\preceq$
    corresponds to a row of immediate leaves $\arb(si) = (s+1)i$ for $s \in
    [r,t]$ and $i = 1,\dots,n-1$. Hence $x = L'_s \in M$. The same argument as
    above shows that $\PPGKZ(\arb) - (n-1)
    \frac{e'_{s+1}-e'_s}{c'_{s+1}-c'_s}$ descents to a vertex of
    $\PP(\PS[m-2,n-1],c)^{-\overline{\wt}}$ and we are done by induction.
\end{proof}

\section{Higher products and multiplihedra}
\label{sec:higher}

The particle interpretations of the previous section generalize to max-slope
pivot rule polytopes of higher products of simplices
\[
    \PS[n_1,\dots,n_k] \ := \ \Simp[n_1] \times \cdots \times \Simp[n_k] \, , 
\]
by considering collections of hyperplanes parallel to the coordinate planes in
$\R^k$ and with particles sitting at their $0$-dimensional intersections.

If $n_i = 1$ for all $i=1,\dots,k$, then $P$ is linearly isomorphic to a
$k$-dimensional cube and the max-slope pivot rule polytope was determined
in~\cite{PivPoly}. Let $\SymGrp_k$ be the permutations of $[k]$.  Recall that
the \Def{permutahedron} $\Pi(a_1,\dots,a_k)$ for $a_1,\dots,a_k \in \R$ is the
convex hull of $(a_{\sigma(1)},\dots,a_{\sigma(k)})$ for for all $\sigma \in
\SymGrp_k$.  If all $a_i$ are distinct, then $\Pi(a_1,\dots,a_{k})$ is a
simple $(k-1)$-dimensional polytope with $k!$ many vertices;
cf.~\cite[Section~0]{zieglerbook}. 

\begin{prop}[{\cite[Example~4.2]{PivPoly}}]
    If $n_1 = \cdots = n_k = 1$, then 
    the max-slope pivot rule polytope of $\PS[n_1,\dots,n_k]$ is linearly
    isomorphic to the permutahedron $\Pi(2^0, 2^1, \dots, 2^{k-1})$.
\end{prop}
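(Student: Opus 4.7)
The plan is to identify the max-slope arborescences directly with permutations in $\SymGrp_k$ via \eqref{eqn:max_slope}, write out the vertex $\PPGKZ(\arb^\sigma)$ via \eqref{eqn:GKZ} by a short counting argument, and then realize the claimed linear isomorphism by rescaling coordinates with $c$. By \Cref{cor:PP_inv}(iii),(iv), I may assume $P = [0,1]^k$ and, after reflecting signs of coordinates if necessary, that $c \in \R^k$ is edge-generic with $c_i > 0$ for all $i$. Then $\vopt = \mathbf{1}$, every non-optimal vertex $v \in \{0,1\}^k$ has improving neighbors $\{v + e_i : v_i = 0\}$, and each edge direction $u - v$ equals a standard basis vector $e_i$.

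Plugging $u - v = e_i$ into \eqref{eqn:max_slope} yields $\langle \wt, u - v\rangle/\langle c, u - v \rangle = \wt_i/c_i$, a quantity that depends only on $i$, not on $v$. Hence for $\wt$ generic the max-slope rule at every $v$ selects the index $i$ with $v_i = 0$ maximizing $\wt_i/c_i$, and the entire arborescence is encoded by the permutation $\sigma \in \SymGrp_k$ satisfying $\wt_{\sigma(1)}/c_{\sigma(1)} > \wt_{\sigma(2)}/c_{\sigma(2)} > \cdots > \wt_{\sigma(k)}/c_{\sigma(k)}$. Two weights yield the same arborescence iff they induce the same $\sigma$, so by \Cref{thm:PP} the vertices of $\PP(P,c)$ are in bijection with $\SymGrp_k$; I will write the corresponding arborescence as $\arb^\sigma$.

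Next I would evaluate $\PPGKZ(\arb^\sigma)$ from \eqref{eqn:GKZ} by grouping the contributions according to which coordinate is flipped. At a vertex $v$, the rule chooses $i = \sigma(r)$ where $r$ is the smallest index with $v_{\sigma(r)} = 0$. Such a vertex is determined by $v_{\sigma(1)} = \cdots = v_{\sigma(r-1)} = 1$, $v_{\sigma(r)} = 0$, and arbitrary entries at the remaining $k - r$ coordinates, so $\sigma(r)$ is chosen at exactly $2^{k-r}$ vertices. (As a sanity check, $\sum_{r=1}^k 2^{k-r} = 2^k - 1$ equals the number of non-optimal vertices.) Since $\arb^\sigma(v) - v = e_{i(v)}$ and $\langle c, e_{i(v)}\rangle = c_{i(v)}$, summing gives
\[
    \PPGKZ(\arb^\sigma) \ = \ \sum_{r=1}^k \frac{2^{k-r}}{c_{\sigma(r)}}\, e_{\sigma(r)} \, .
\]

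Finally, I would define the invertible linear map $T : \R^k \to \R^k$ by $T(e_i) = c_i\, e_i$. Then $T \PPGKZ(\arb^\sigma) = \sum_{r=1}^{k} 2^{k-r}\, e_{\sigma(r)}$, whose $i$-th coordinate is $2^{k - \sigma^{-1}(i)}$. As $\sigma$ ranges over $\SymGrp_k$, so does $\sigma^{-1}$, and $\bigl(2^{k-\sigma^{-1}(i)}\bigr)_{i=1}^k$ ranges over all permutations of $(2^{k-1},\dots,2,1)$. Therefore $T$ maps the vertex set of $\PP(P,c)$ bijectively onto the vertex set of $\Pi(2^0, 2^1,\dots,2^{k-1})$, giving the claimed linear isomorphism.

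There is no substantial obstacle here; the only thing that needs care is the bookkeeping in the counting step and the verification that the rescaling by $T$ matches the permutahedron conventions. The conceptual point is that \eqref{eqn:max_slope} becomes \emph{vertex-independent} on the cube because all edges are translates of the coordinate axes — a simplification that is absent for simplices and their products.
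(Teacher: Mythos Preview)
Your argument is correct. The key observation --- that on the cube every edge direction is some $e_i$, so the max-slope quotient $\wt_i/c_i$ is independent of the base vertex --- is exactly what makes the arborescence depend only on a permutation $\sigma$, and your count of the $2^{k-r}$ vertices at which coordinate $\sigma(r)$ is flipped is right. The diagonal rescaling $T$ then lands you on the vertex set of $\Pi(2^0,\dots,2^{k-1})$, and since $T$ is linear it carries the convex hull to the convex hull.

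As for comparison: the present paper does not actually give a proof of this proposition; it is stated as a citation to~\cite[Example~4.2]{PivPoly}. Your write-up is essentially the direct computation one would expect that example to contain, so there is nothing to contrast. One stylistic remark: rather than invoking \Cref{cor:PP_inv}(iv) to reflect coordinates so that $c_i>0$, you could simply note that for the cube any edge-generic $c$ has all $c_i\neq 0$ and the slope $\wt_i/c_i$ still makes sense; but your normalization is harmless.
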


This agrees with the particle perspective, where we would consider for each
$i=1,\dots,k$ two hyperplanes in $\R^k$ parallel to $\{x_i = 0\}$. Collisions
here correspond to the event that two parallel hyperplanes meet and hence the
relevant information is the order in which these events take place.

In this last section, we focus on the max-slope pivot rule polytopes of
$\PS[n-1,1,1,\dots,1] = \Simp[n-1] \times (\Simp[1])^k$. Making use of \Cref{cor:PP_inv}, it suffices
to look at the polytopes
\[
    Q_{n,k} \ := \ \Simp[n-1] \times [0,1]^k \ \subset \ \R^n \times \R^k \, 
\]
together with an objective function $(c,r) \in \R^{n + k}$ with $c = (c_1 <
c_2 < \cdots < c_n)$ and $r = (r_1,\dots,r_k) \in \R^{k}$ with $r_i > 0$ for
all $i$. The vertices of $Q_{n,k}$ can be identified with pairs $(i,B)$ with
$i \in [n]$ and $B \subseteq [k]$. There is a directed edge from $(i,B)$ to
$(j,C)$ if and only if $i < j$ and $B=C$ or $i=j$ and $C = B \cup \{b\}$ for
some $b \in [k] \setminus B$. In particular the unique sink is $(n,[k])$. 

\renewcommand\AA{\mathbb{A}}%
\begin{thm}\label{thm:multi_multiplihedron}
    Let $(\AA,\cdot)$ be a non-associative monoid and let $f_1,\dots, f_k :
    \AA \to \AA$ be morphisms. The vertices of $\PP(Q_{n,k},(c,r))$ are in
    bijection with the possible ways of evaluating 
    \[
        (f_{\sigma(1)} \circ f_{\sigma(2)} \circ \cdots \circ
        f_{\sigma(k)})(a_1 \cdot a_2 \cdots a_n)\, ,
    \]
    where $a_1,\dots,a_n \in \AA$ and $\sigma \in \SymGrp_k$ is a permutation.
\end{thm}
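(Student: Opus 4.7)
The plan is to adapt the product decomposition of \Cref{sec:products} to the mixed product $Q_{n,k} = \Simp[n-1] \times [0,1]^k$ and then translate the resulting combinatorial data into evaluations of the composition. Given a weight $\wt = (\wt', s) \in \R^n \times \R^k$, a direct analog of \eqref{eqn:prod_arb} shows that at every vertex $(i, B) \in [n] \times 2^{[k]}$ the max-slope arborescence $\arb^\wt$ either takes the simplex edge $(i, B) \to (\arb^{\wt'}(i), B)$ of slope $\tau'(i) = \max_{j>i}(\wt'_j - \wt'_i)/(c_j - c_i)$, or adds a cube coordinate via $(i, B) \to (i, B \cup \{b^*\})$ for the $b^* \in [k] \setminus B$ maximizing $s_b/r_b$, whichever slope is larger.

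The decisive observation is that the cube slope $\tau_b := s_b/r_b$ depends only on $b$, not on $(i,B)$. Hence the cube coordinates are added in the same total order wherever they are added, inducing a permutation $\sigma \in \SymGrp_k$ characterized by $\tau_{\sigma(1)} > \tau_{\sigma(2)} > \cdots > \tau_{\sigma(k)}$. For each particle $i \in [n-1]$, set $j_i := |\{b \in [k] : \tau_b > \tau'(i)\}| \in \{0, 1, \dots, k\}$; this counts how many cube events overtake $i$ before $i$ is absorbed. The triple $(\sigma, \arb^{\wt'}, (j_i)_{i=1}^{n-1})$ clearly determines $\arb^\wt$, and I would show realizability by the inductive weight construction of \Cref{thm:noncross}: choose $s$ so that the $\tau_b$ realize $\sigma$; then choose $\wt'$ realizing the desired noncrossing arborescence $\arb'$ with each $\tau'(i)$ placed in the prescribed interval $(\tau_{\sigma(j_i + 1)}, \tau_{\sigma(j_i)})$. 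The only combinatorial constraint beyond genericity comes from \Cref{lem:poset}: whenever $i \preceq i'$ in the collision poset of $\arb'$, we must have $\tau'(i) \ge \tau'(i')$ and therefore $j_i \le j_{i'}$.

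To match this with evaluations of $(f_{\sigma(1)} \circ \cdots \circ f_{\sigma(k)})(a_1 \cdot a_2 \cdots a_n)$, I would identify $\arb^{\wt'}$ via \Cref{thm:arb_to_cs} with a complete bracketing, i.e., a binary tree $T$ with leaves labelled $a_1, \dots, a_n$ and internal nodes labelled by the particles $1, \dots, n-1$ (each non-root particle $i$ corresponds to the node at which $i$ is absorbed). The permutation $\sigma$ specifies the order in which the morphisms are composed from outermost to innermost. Because each $f_{\sigma(p)}$ is a morphism, we have $f_{\sigma(p)}(ab) = f_{\sigma(p)}(a) f_{\sigma(p)}(b)$, so $f_{\sigma(p)}$ can be pushed past certain internal nodes of $T$; the threshold $j_i$ prescribes that the morphisms $\{f_{\sigma(1)}, \dots, f_{\sigma(j_i)}\}$ are exactly those pushed past the internal node of $i$. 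The monotonicity $i \preceq i' \Rightarrow j_i \le j_{i'}$ then translates into the admissibility condition that a morphism pushed past a node must also be pushed past every ancestor node.

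The main obstacle I foresee is matching the collision partial order of \Cref{lem:poset} with the ancestor order on internal nodes of the bracketing tree $T$, and hence verifying that the monotonicity of $(j_i)$ is precisely the combinatorial admissibility of painted trees that enumerates vertices of the multiplihedron. A secondary technical point is to show that the interleaving conditions $\tau'(i) \in (\tau_{\sigma(j_i+1)}, \tau_{\sigma(j_i)})$ can be realized simultaneously for all $i$ satisfying the monotonicity, by refining the inductive weight construction from \Cref{thm:noncross} to control the exact values of $\tau'(i)$ and not just which max-slope arborescence they induce.
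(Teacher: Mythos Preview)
Your approach is the same as the paper's: decompose a max-slope arborescence on $Q_{n,k}$ into a triple $(\arb',\sigma,(j_i))$ consisting of a noncrossing arborescence on $[n]$, a permutation of $[k]$, and a map $[n-1]\to\{0,\dots,k\}$ that is monotone for the collision poset, and then identify such triples with evaluations. Your $j_i$ is (up to complementation in $k$) the paper's $\phi_s(i)$.

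Two corrections. The implication $i \preceq i' \Rightarrow \tau'(i) \ge \tau'(i')$ is \emph{not} contained in \Cref{lem:poset}; it is the one genuinely new ingredient in this section, isolated in the paper as \Cref{lem:slope_poset} and proved by reducing to covering relations of $\preceq$ (as described just before \Cref{lem:poset}) and applying the slope convexity \Cref{lem:conv}. You state it as known, so this is the actual gap in your write-up. Conversely, your ``main obstacle'' is a non-issue: the collision partial order \emph{is} the ancestor order on internal nodes of the binary tree by construction, as explained in the paragraphs preceding \Cref{lem:poset}.

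Your ``secondary technical point'' is real. The paper sidesteps it by reversing the order of choices: first pick $\wt'$ realizing $\arb'$ via \Cref{thm:noncross}, which fixes all the values $\tau'(i)$, and only then place the $s_h$ among them to produce the prescribed $\phi$. This avoids having to engineer the $\tau'(i)$ into preassigned intervals. One still needs the $\tau'$-order on $[n-1]$ to be compatible with the given $\phi$; this can be arranged using the freedom in the inductive construction of \Cref{thm:noncross} (at each step one may choose \emph{which} immediate leaf to peel off and then make its slope as large as desired), so that the resulting linear extension refines the level sets of $\phi$.
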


For $k \ge 2$, all polytopes $\PP(Q_{n,k},(c,r))$ of dimension $\le 3$ are
permutahedra. \Cref{fig:multi} gives an illustration for $n=k=2$.  Note that
if $k=0$, then $Q_{n,0} = \Simp[n-1]$ and $\PP(Q_{n,k},(c,r))$ is the
associahedron by \Cref{thm:main_asso}. For $k=1$,
\Cref{thm:multi_multiplihedron} yields that the vertices of $\PP(\Simp[n-1]
\times \Simp[1],(c,r))$ are in bijection with the vertices of Stasheff's
\Def{multiplihedron}; see \cite{Stasheff-book}. Indeed, $\PP(\Simp[n-1] \times
\Simp[1],(c,r))$ is the constrainahedron $C(2,n) = C(n,2)$ by
\Cref{thm:constrainahedron}, which coincides with the multiplihedron
by~\cite[Theorem 2.1]{BottmanPoliakova}. \Cref{thm:multi_multiplihedron}
insinuates that $\PP(Q_{n,k},(c,r))$ is combinatorially isomorphic to the
$(k,n)$-multiplihedron of Chapoton--Pilaud~\cite{ChapotonPilaud}.  Germain
Poullot found a general piecewise-linear connection between max-slope pivot
rule polytopes of products of simplices and shuffle products of associahedra;
see~\cite[Section~3.2]{poullot:tel-04269354}.

\begin{figure}[h]
    \begin{center}
        \includegraphics[height=2.5cm]{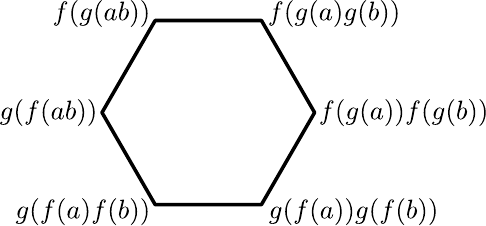}
    \end{center}
    \caption{An illustration of the $(2,2)$-multiplihedron.}
    \label{fig:multi}
\end{figure}

Consider an evaluation of $(f_{\sigma(1)} \circ f_{\sigma(2)} \circ \cdots \circ
f_{\sigma(k)})(a_1 \cdot a_2 \cdots a_n)$ with $\sigma$ fixed. Disregarding the morphisms for a
moment, the order in which the $n-1$ multiplications have to be carried out is
encoded by a partial order $\preceq_T$ on $[n-1]$ whose Hasse diagram is
well-known to be a tree $T$ rooted at the maximum (i.e., the last multiplication
to be carried out). Every node in the tree has at most two children. With the
labelling given by $[n-1]$, the possible posets are precisely the binary search
trees on $n-1$ nodes. Now, for the $j$-th multiplication, we record the number
$\phi(j)$ of morphisms that have to be applied before the multiplication can
take place. The poset $([n-1],\preceq_T)$ together with the map $\phi : [n-1] \to
\{0,1,\dots,k\}$ completely determines the evaluation. Note that if $i \prec j$,
then $\phi(i) \le \phi(j)$ and hence $\phi$ is an \Def{order-preserving map}.

Let $(\wt,s) \in \R^{n+k}$ be a generic weight. As before, we define
$\tau(i,j) := \frac{\wt_j - \wt_i}{c_j - c_i}$ for $1 \le i < j \le n$ and
$\tau(i) = \max_{j >i} \tau(i,j)$. By genericity we can assume that $s_g \neq
s_h$ for $g \neq h$. Let $(i,B) \neq (n,[k])$ and let $h \in [k] \setminus B$
such that $s_h$ is maximal. Unravelling \eqref{eqn:max_slope} we find that 
\[
    \arb^{(\wt,s)}(i,B) \ = \
    \begin{cases} 
        (\arb^\wt(i),B) & \text{ if } \tau(i) > s_h \\
        (i,B \cup \{h\}) & \text{ if } \tau(i) < s_h.
    \end{cases}
\]
There is unique permutation $\sigma \in \SymGrp_k$ such that $s_{\sigma(1)} >
s_{\sigma(2)} > \cdots > s_{\sigma(k)}$. Moreover, for every $i \in [n-1]$,
define $\phi_s(i) := |\{h \in [k] : \tau(i) > s_h \}|$. Then the arborescence
$\arb^{\wt,s}$ is completely determined by $(\tau,\sigma,\phi_s)$.

Recall from \Cref{lem:poset} that any noncrossing arborescence $\arb : [n-1]
\to [n]$ determines a partial order $\preceq$ on $[n-1]$ whose Hasse diagram
is given by a binary search tree on $[n-1]$. 

\begin{lem}\label{lem:slope_poset}
    For a generic weight $\wt \in \R^n$ let $([n-1],\preceq)$ be the
    poset associated to the noncrossing arborescence $\arb^\wt$. If $a \prec
    b$, then $\tau(a) > \tau(b)$.
\end{lem}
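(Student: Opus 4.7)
The plan is to combine the combinatorial characterization of $\preceq$ from \Cref{lem:poset} with the convexity identity of \Cref{lem:conv} and the defining inequalities \eqref{eqn:coh} of max-slope arborescences.

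The key technical step I would isolate first is a \emph{one-step slope decrease}: whenever $a \in [n-1]$ and $\arb(a) \in [n-1]$, one has $\tau(\arb(a)) < \tau(a)$. Writing $b := \arb(a)$ and $j := \arb(b)$, the max-slope condition at $a$ gives $\tau(a,b) > \tau(a,j)$, because $j$ is another candidate above $a$ but distinct from $\arb(a) = b$. Applying \Cref{lem:conv} to $a < b < j$ converts this into $\tau(b,j) < \tau(a,j) < \tau(a,b)$; since $\tau(a) = \tau(a,b)$ and $\tau(b) = \tau(b,j)$, this reads as $\tau(b) < \tau(a)$.

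I would then split the argument according to the two cases in \Cref{lem:poset}. If $a \le b$ and $b = \arb^k(a)$ for some $k \ge 1$, iterating the one-step decrease along $a, \arb(a), \dots, \arb^k(a) = b$ yields $\tau(a) > \tau(b)$; all intermediate iterates remain in $[n-1]$ because the endpoint $b$ does. If instead $b < a$ and $\arb(b) = \arb^{k+1}(a)$ for some $k \ge 0$, I would set $j := \arb^k(a)$, so that $\arb(j) = \arb(b)$ and $b < a \le j < \arb(j)$. The max-slope condition at $b$ gives $\tau(b,\arb(b)) > \tau(b,j)$ (using $j \ne \arb(b)$ since $\arb(b) = \arb(j) > j$), and \Cref{lem:conv} applied to $b < j < \arb(b)$ flips this into $\tau(j,\arb(j)) > \tau(b,\arb(b))$, i.e.\ $\tau(j) > \tau(b)$. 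Combining with $\tau(a) \ge \tau(j)$, which comes from the first case (with equality when $k=0$), then gives $\tau(a) > \tau(b)$.

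I do not anticipate real obstacles beyond careful bookkeeping: the argument boils down to two applications of \Cref{lem:conv} glued by transitivity of $>$. The only subtlety worth double-checking is that the $\arb$-iterates involved never step onto the global sink $n$, which is automatic because the relevant endpoints lie in $[n-1]$.
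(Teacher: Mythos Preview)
Your argument is correct. The one-step decrease $\tau(\arb(a))<\tau(a)$ follows exactly as you say from \eqref{eqn:coh} together with \Cref{lem:conv} applied to $a<\arb(a)<\arb^2(a)$, and your treatment of the second clause of \Cref{lem:poset} --- comparing $\tau(b)$ to $\tau(j)$ with $j=\arb^k(a)$ via \Cref{lem:conv} on $b<j<\arb(b)$ --- goes through; the index checks ($j\in[n-1]$, $j\neq\arb(b)$, intermediate iterates staying below $n$) are all fine.

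The paper's proof and yours rest on the same two applications of \Cref{lem:conv}, but are organized differently. The paper first reduces to covering relations $a\lessdot b$, which forces either $b=\arb(a)$ (the case $a<b$) or $b<a$ with $\arb(a)=\arb(b)$, and then handles each with a single invocation of \Cref{lem:conv}; the additional assertions $a=b\pm1$ stated there are not actually used in the ensuing computation. You instead work directly with the two-clause description of $\preceq$ in \Cref{lem:poset} and allow the $\arb$-chain from $a$ to have arbitrary length, iterating the one-step decrease and then gluing on the final comparison. The net content is the same; your route is marginally more self-contained in that it invokes \Cref{lem:poset} verbatim rather than through an intermediate description of the covers.
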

\begin{proof}
    It suffices to assume that $a$ is covered by $b$, that is, $a \prec b$ and
    there is no $s$ such that $a \prec s \prec b$. It follows from
    \Cref{lem:poset} that this is the case if $a = b-1$ and $\arb^\wt(a) = b$
    or $a = b+1$ and $\arb^\wt(a) = \arb^\wt(b) = a+1$.  

    Let $c = \arb^\wt(b)$ In the former case, \Cref{lem:conv} with $(r,s,t) =
    (a,b,c)$ implies $\tau(a) = \tau(a,b) > \tau(a,c) > \tau(b,c) = \tau(b)$.
    In the latter case when $b < a$, $\tau(b) = \tau(b,c) > \tau(b,a)$ and
    \Cref{lem:conv} with $(r,s,t) = (b,a,c)$ yields $\tau(b,c) < \tau(a,c) =
    \tau(a)$.
\end{proof}

\begin{proof}[Proof of \Cref{thm:multi_multiplihedron}]
    Let $(\wt,s)$ be a generic weight. The noncrossing arborescence
    $\arb^\wt$ determines a partial order $([n-1],\preceq)$ whose Hasse
    diagram is a binary search tree $T$. By \Cref{lem:slope_poset}, the map
    $\phi_s$ associated to $s$ is order-preserving and takes values in
    $\{0,1\dots,k\}$. Thus, together with the permutation $\sigma$, this
    uniquely determines a unique evaluation of $(f_{\sigma(1)} \circ
    f_{\sigma(2)} \circ \cdots \circ f_{\sigma(k)})(a_1 \cdot a_2 \cdots a_n)$
    as discussed above. Conversely, given such an evaluation represented by a
    permutation $\sigma$, a partial order $([n-1],\preceq)$, and an
    order-preserving map $\phi$, there is a unique noncrossing arborescence
    $\arb$ which gives rise to $([n-1],\preceq)$. By \Cref{thm:noncross}, we
    can find $\wt \in \R^n$ with $\arb = \arb^\wt$. Finally, given $\sigma$ and
    $\phi$, we can find numbers $s_1,\dots,s_k$ with $s_{\sigma(i)} >
    s_{\sigma(i+1)}$ and $\phi(i) = \phi_s(i)$ for all $i=1,\dots,k$. 
\end{proof}

The \Def{order polynomial} $\Omega_P(l)$ of a partially ordered set
$(P,\preceq)$ is a polynomial of degree $|P|$ such that $\Omega_P(l)$ is the
number of order-preserving maps $P \to [l]$. The order polynomial was
introduced by Stanley and is well-studied; see, for example,~\cite{EC1,CRT}.
For fixed $n \ge 2$, we define 
\[
    V_{n}(k) \ := \ \sum_T \Omega_T(k+1) \, ,
\]
where the sum is over all binary search trees $T$ of on $n$ nodes. Our encoding of
an evaluation of $(f_{\sigma(1)} \circ f_{\sigma(2)} \circ \cdots \circ
f_{\sigma(k)})(a_1 \cdot a_2 \cdots a_n)$ by a binary search tree and an
order-preserving map into $\{0,1,\dots,k\}$ yields the following.

\begin{cor}\label{cor:n_vertices}
    The number of evaluations of $(f_{\sigma(1)} \circ f_{\sigma(2)} \circ
    \cdots \circ f_{\sigma(k)})(a_1 \cdot a_2 \cdots a_n)$ is $V_n(k)$. In
    particular, the number of vertices of $\PP(Q_{n,k},(c,r))$ is $k!V_n(k)$.
\end{cor}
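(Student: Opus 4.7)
The plan is to count evaluations directly via the parametrization already introduced in the proof of \Cref{thm:multi_multiplihedron}. Fix a permutation $\sigma \in \SymGrp_k$. I would first observe that an evaluation of $(f_{\sigma(1)} \circ \cdots \circ f_{\sigma(k)})(a_1 \cdots a_n)$ is uniquely specified by two further pieces of data: a binary search tree $T$ encoding the order in which the $n-1$ multiplications are carried out, and an assignment $\phi$ to each multiplication node of the number of morphisms applied by the time that multiplication takes place.

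The second step is to identify the admissible maps $\phi$. Because a morphism applied after a multiplication distributes across both factors, the number of morphisms already present at a multiplication $u$ cannot exceed the number present at any later multiplication $v$. Thus $\phi(u) \le \phi(v)$ whenever $u \preceq_T v$, i.e., $\phi$ is an order-preserving map from $([n-1], \preceq_T)$ into the chain $\{0, 1, \ldots, k\}$ of $k+1$ elements. Conversely, every such pair $(T, \phi)$ corresponds to a valid evaluation, as can be read off from the discussion preceding the proof of \Cref{thm:multi_multiplihedron}.

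The third step is purely enumerative. For fixed $T$, the number of order-preserving maps $T \to \{0, 1, \ldots, k\}$ is $\Omega_T(k+1)$ by definition of the order polynomial. Summing over all binary search trees $T$ of the relevant size yields $\sum_T \Omega_T(k+1) = V_n(k)$ evaluations for the fixed $\sigma$. This establishes the first claim. The second claim then follows immediately: the permutation $\sigma \in \SymGrp_k$ varies independently, so the total number of evaluations is $k! \cdot V_n(k)$, and this counts the vertices of $\PP(Q_{n,k},(c,r))$ by \Cref{thm:multi_multiplihedron}.

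There is no real obstacle beyond careful bookkeeping; the potentially delicate point, namely that distinct triples $(\sigma, T, \phi)$ correspond to distinct evaluations (and hence distinct vertices), is already implicit in the bijection of \Cref{thm:multi_multiplihedron}, where the data $(\sigma, \preceq_T, \phi_s)$ recovered from a generic weight $(\wt, s)$ uniquely determines the max-slope arborescence.
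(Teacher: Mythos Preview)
Your proposal is correct and follows essentially the same approach as the paper: the paper's argument is the single sentence preceding the corollary, namely that the encoding of an evaluation (for fixed $\sigma$) by a binary search tree $T$ together with an order-preserving map $\phi : T \to \{0,1,\dots,k\}$ immediately gives the count $\sum_T \Omega_T(k+1) = V_n(k)$, and then one multiplies by $k!$ and invokes \Cref{thm:multi_multiplihedron}. Your write-up simply spells this out in more detail.
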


The number of vertices of the $(k,n)$-multiplihedron is given Proposition 126
of~\cite{ChapotonPilaud} as $k!$ times the coefficient of $y^{n+1}$ of the
power series $C^{(m+1)}(y)$, where $C^{(1)}(y) = \frac{1 - \sqrt{1-4y}}{2}$
is the Catalan generating function and $C^{(i+1)}(y) = C(C^{(i)}(y))$. It is
remarkable that the number of vertices for varying $k$ is essentially given by
a polynomial. The following table shows a few of the polynomials $V_{n}(k)$. 
\[
    \def\arraystretch{1.2}
    \begin{array}{l|l}
        n & V_{n}(k)\\
        \hline
        2 &  k^{2} + 3 k + 2\\
3 &  k^{3} + \frac{11}{2} k^{2} + \frac{19}{2} k + 5\\
4 &  k^{4} + \frac{25}{3} k^{3} + 25 k^{2} + \frac{95}{3} k + 14\\
5 &  k^{5} + \frac{137}{12} k^{4} + \frac{101}{2} k^{3} + \frac{1291}{12} k^{2} + \frac{219}{2} k + 42\\
6 &  k^{6} + \frac{147}{10} k^{5} + \frac{263}{3} k^{4} + \frac{541}{2} k^{3} + \frac{1360}{3} k^{2} + \frac{1944}{5} k + 132\\
7 &  k^{7} + \frac{363}{20} k^{6} + \frac{2069}{15} k^{5} + \frac{6809}{12} k^{4} + \frac{8161}{6} k^{3} + \frac{56773}{30} k^{2} + \frac{14079}{10} k + 429\\
8 &  k^{8} + \frac{761}{35} k^{7} + \frac{36461}{180} k^{6} + \frac{63229}{60} k^{5} + \frac{60125}{18} k^{4} + \frac{395189}{60} k^{3} + \frac{1415369}{180} k^{2} + \frac{362247}{70} k + 1430\\
9 &  k^{9} + \frac{7129}{280} k^{8} + \frac{7915}{28} k^{7} + \frac{64549}{36} k^{6} + \frac{42877}{6} k^{5} + \frac{6665857}{360} k^{4} + \frac{373321}{12} k^{3} + \frac{8211733}{252} k^{2} + \frac{269403}{14} k + 4862\\
    \end{array}
\]

As the table shows, all coefficients are non-negative and, of course, we
tested if the polynomials are real-rooted. This seems to be true for $n \le
10$ but unknown beyond. For $n \le 10$, $(n-1)!V_{n}(k)$ has integer
coefficients which are log-concave.  We end with two simple observations
regarding the polynomial.

\begin{thm}
    For $n \ge 2$, the leading coefficient of $V_n(k)$ is $1$ and the constant
    coefficient is the $n$-th Catalan number $C_n$.
\end{thm}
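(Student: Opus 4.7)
The proof rests on two standard facts about the order polynomial $\Omega_P(m)$ of a finite poset $(P,\preceq_P)$ with $|P|=N$: (a) $\Omega_P(1)=1$, since the only weakly order-preserving map into a one-element chain is constant; and (b) $\Omega_P(m)$ is a polynomial of degree $N$ in $m$ with leading coefficient $e(P)/N!$, where $e(P)$ denotes the number of linear extensions of $P$. Fact (b) follows from the classical identity $\Omega_P^\circ(m)=e(P)\binom{m}{N}$ for the strict order polynomial (a chain of length $N$ chooses its images freely, then any linear extension fills it in) together with the reciprocity $\Omega_P(m)=(-1)^N\Omega_P^\circ(-m)$.

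For the constant coefficient, I would simply evaluate $V_n(0)=\sum_T \Omega_T(1)=\sum_T 1$. The sum runs over binary search trees on $n$ nodes, and it is classical that there are exactly $C_n$ such trees (indeed, each binary tree shape on $n$ nodes admits a unique BST labeling by $[n]$). Hence $V_n(0)=C_n$.

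For the leading coefficient, fact (b) applied to each summand gives $\Omega_T(k+1)=\frac{e(T)}{n!}k^n + \text{lower order terms}$, so the coefficient of $k^n$ in $V_n(k)$ equals $\frac{1}{n!}\sum_T e(T)$. The remaining task is to establish the identity $\sum_T e(T)=n!$ where $T$ runs over binary search trees on $n$ nodes. The cleanest proof is via the BST insertion map $\SymGrp_n\to\{\text{BSTs on }n\text{ nodes}\}$: given $\pi=\pi_1\pi_2\cdots\pi_n$, insert $\pi_1,\pi_2,\dots,\pi_n$ successively into an initially empty BST to produce a tree $T(\pi)$. The fiber over a fixed $T$ is exactly the set of permutations $\pi$ in which every ancestor of every vertex appears before that vertex, which are precisely the linear extensions of the ancestor poset of $T$. (Reversing the poset, as one does to match the paper's convention of root-as-maximum, preserves the number of linear extensions.) Hence $n!=\sum_T |T^{-1}(T)|=\sum_T e(T)$, and the leading coefficient of $V_n(k)$ is $n!/n!=1$.

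There is no genuine obstacle here, just the bookkeeping needed to match conventions: one must verify that the ancestor poset of the BST is the (opposite of the) Hasse diagram poset used earlier, and that the fiber of BST insertion really is counted by $e(T)$, both of which are immediate from the recursive definition of BST insertion.
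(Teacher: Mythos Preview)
Your proof is correct and follows essentially the same approach as the paper: both reduce the constant term to $\Omega_T(1)=1$ and the leading coefficient to the identity $\sum_T e(T)=n!$. The only difference is cosmetic: the paper proves $\sum_T e(T)=n!$ via the recursive ``max-at-root'' construction (given $(\ell_1,\dots,\ell_n)$, root at the index of the maximum and recurse on each side), whereas you use standard BST insertion; both are well-known bijections from $\SymGrp_n$ to pairs (BST, linear extension), so this is a matter of taste rather than substance.
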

\begin{proof}
    For the constant coefficient, we simply note that $\Omega_T(1) = 1$ and
    hence $V_{n}(0)$ is the number of trees $C_n$. For the leading
    coefficient, we recall that the leading coefficient of $n!\Omega_T(l)$ is
    the number of linear extensions $e(T)$ of $T$, that is, order-preserving
    bijections $\ell : T \to [n]$; see~\cite[Sect.~6.2]{CRT}.  If $\ell =
    (\ell_1,\dots,\ell_n)$ is a list of distinct numbers, there is a simple
    recursive procedure that produces a binary tree $T$ on nodes $1,\dots,n$
    such that $i \mapsto \ell_i$ gives a order preserving map on $T$;
    see, for example,~\cite{tonks}. If $i$
    is the index such that $\ell_i$ is maximal, then set $i$ to be the root of
    the binary tree. The left subtree is determined by
    $(\ell_1,\dots,\ell_{i-1})$ and the right subtree is determined by
    $(\ell_{i+1},\dots,\ell_n)$. Thus, every bijection $[n] \to [n]$
    determines a unique binary tree $T$ and the number of bijections that
    produce $T$ is $e(T)$. Hence $\sum_{T}e(T) = n!$.
\end{proof}

\bibliographystyle{siam}
\bibliography{References.bib} 
\end{document}